\newtheorem{theorem}{Theorem}[section]
\newtheorem{lemma}[theorem]{Lemma}
\theoremstyle{remark}
\newtheorem{definition}[theorem]{Definition}
\newtheorem{corollary}[theorem]{Corollary}
\newtheorem{assumption}{Assumption}
\newtheorem{remark}[theorem]{Remark}
\numberwithin{equation}{section}
\newcommand{\E}{{\mathbb E}}
\newcommand{\R}{{\mathbb R}}
\newcommand{\BMO}{L^{2, \;\mathrm{BMO}}_{\mathcal F^{W}}(0, T;\mathbb{R}^{n})}
\newcommand{\esssup}{\ensuremath{\operatorname{ess\;sup}}}
\newcommand{\cH}{\ensuremath{\mathcal{H}}}
\newcommand{\cM}{\ensuremath{\mathcal{M}}}
\newcommand{\argmin}{\ensuremath{\operatorname*{argmin}}}
\newcommand{\dualgamma}{\widehat{\Gamma}}
\begin{document}

\begin{frontmatter}
\title{Constrained stochastic LQ control with regime switching and application to portfolio selection}
%\title{A sample article title with some additional note\thanksref{t1}}
\runtitle{LQ control with regime switching}
%\thankstext{T1}{A sample additional note to the title.}

\begin{aug}
%%%%%%%%%%%%%%%%%%%%%%%%%%%%%%%%%%%%%%%%%%%%%%
%%Only one address is permitted per author. %%
%%Only division, organization and e-mail is %%
%%included in the address.                  %%
%%Additional information can be included in %%
%%the Acknowledgments section if necessary. %%
%%%%%%%%%%%%%%%%%%%%%%%%%%%%%%%%%%%%%%%%%%%%%%
\author[A]{\fnms{YING} \snm{HU}\ead[label=e1]{ying.hu@univ-rennes1.fr}},
\author[B]{\fnms{XIAOMIN} \snm{SHI}\ead[label=e2]{shixm@mail.sdu.edu.cn}}
\and
\author[C]{\fnms{ZUO QUAN} \snm{XU}\ead[label=e3]{maxu@polyu.edu.hk}}
%%%%%%%%%%%%%%%%%%%%%%%%%%%%%%%%%%%%%%%%%%%%%%
%% Addresses                                %%
%%%%%%%%%%%%%%%%%%%%%%%%%%%%%%%%%%%%%%%%%%%%%%
\address[A]{Univ Rennes, CNRS, IRMAR-UMR 6625, F-35000 Rennes, France,
\printead{e1}}

\address[B]{School of Mathematics and
Quantitative Economics,
Shandong University of Finance and Economics, Jinan, Shandong, China,
\printead{e2}}

\address[C]{Department of Applied Mathematics, The Hong Kong Polytechnic University, Hong Kong, China,
\printead{e3}}
\end{aug}

\begin{abstract}
This paper is concerned with a stochastic linear-quadratic optimal control problem with regime switching, random coefficients, and cone control constraint. The randomness of the coefficients comes from two aspects: the Brownian motion and the Markov chain. Using It\^{o}'s lemma for Markov chain, we obtain the optimal state feedback control and optimal cost value explicitly via two new systems of extended stochastic Riccati equations (ESREs). We prove the existence and uniqueness of the two ESREs using tools including multidimensional comparison theorem, truncation function technique, log transformation and the John-Nirenberg inequality. These results are then applied to study mean-variance portfolio selection problems with and without short-selling prohibition with random parameters depending on both the Brownian motion and the Markov chain. Finally, the efficient portfolios and efficient frontiers are presented in closed forms.
\end{abstract}

\begin{keyword}[class=MSC2020]
\kwd[MSC 2020 subject classifications: Primary ]{93E20}
%\kwd{}
\kwd[; secondary ]{60H30, 91G10}
\end{keyword}

\begin{keyword}
\kwd{Constrained stochastic LQ control}
\kwd{regime switching, extended stochastic Riccati equation, existence, uniqueness, mean-variance portfolio selection}
\end{keyword}

\end{frontmatter}
%%%%%%%%%%%%%%%%%%%%%%%%%%%%%%%%%%%%%%%%%%%%%%
%% Please use \tableofcontents for articles %%
%% with 50 pages and more                   %%
%%%%%%%%%%%%%%%%%%%%%%%%%%%%%%%%%%%%%%%%%%%%%%
%\tableofcontents

\section{Introduction}
Linear-quadratic (LQ) optimal control is one of the most important problems in control theory. On one hand, it admits elegant optimal state feedback control and optimal cost value through the famous Riccati equation. On the other hand, it has widely applications in many fields, such as engineering, management science and mathematical finance.

Since the pioneering work of Wonham \cite{Wo}, stochastic LQ problem has been extensively studied by numerous researchers with deterministic and stochastic coefficients. For instance, Bismut \cite{Bi} was the first one that studied stochastic LQ problems with random coefficients. Kohlmann and Zhou \cite{KZ} established the relationship between stochastic LQ problems and backward stochastic differential equations. Chen, Li and Zhou \cite{CLZ} studied the indefinite stochastic LQ problem which is different obviously from its deterministic counterpart. Li and Zhou \cite{LxZ} and Li, Zhou and Rami \cite{LZR} studied stochastic LQ problem with Markovian jumps in finite and infinite time horizon respectively. Please refer to Chapter 6 in Yong and Zhou \cite{YZ} for a systematic accounts on this subject.

The stochastic LQ control theory happens to be a powerful tool for solving continuous-time mean-variance portfolio selection problems; see, e.g., \cite{KT, LN, LZL, LZ, Yu, ZL, ZY}. Especially, Li, Zhou and Lim \cite{LZL} studied a mean-variance model with short selling prohibition. Because all the coefficients are assumed to be deterministic, they adopted the Hamilton-Jacobi-Bellman equation and viscosity solution theory. Hu and Zhou \cite{HZ} solved the corresponding problem with random market parameters using stochastic LQ theory combined with the Tanaka's formula. Czichowsky and Schweizer \cite{CS} studied a cone-constrained mean-variance problem in a general semimartingale model.

As is well known, the closed form representation of the optimal control for stochastic LQ control problems relates intimately to the solvability of the corresponding stochastic Riccati equation (SRE). Therefore, the SRE plays a crucial role in studying stochastic LQ problems. It is Kohlmann and Tang \cite{KT}, for the first time, that established
the existence and uniqueness of the one-dimensional SRE. The matrix-valued SRE with uniformly definite coefficients were solved by Tang \cite{Ta}. As for the matrix-valued indefinite SRE, there were only partial results so far; see, e.g., \cite{Du, HZ, QZ}.

In this paper, we study a stochastic LQ control problem with regime switching and random coefficients, where the control variable has to be constrained in a cone. The randomness comes from two aspects: the Brownian motion driving the asset price dynamics and the Markov chain standing for the regime switching. Moreover, the control weighting matrix in the cost functional is allowed to be possibly singular. By the technique of completing squares, we obtain two systems of backward stochastic differential equations (BSDEs) termed extended stochastic Riccati equations (ESREs). These two systems are highly nonlinear, so the solvability of them is interesting in its own right. Thanks to a stability result of BSDE by Cvitanic and Zhang \cite{CZ} and a multidimensional comparison theorem by Hu and Peng \cite{HP}, we could prove the existence of solutions to the two ESREs. To prove uniqueness, most of the aforementioned papers used the Feynman-Kac type representation of SREs. Rather than such an indirect method, in this paper we provide a direct approach using $\log$ transformation and the John-Nirenberg inequality. Finally, we succeed in obtaining the optimal state feedback control and optimal cost value similar to the classical unconstrained-control problem or the problem without regime switching by the two systems of ESREs.

Another economic motivation of this paper is to study continuous-time mean-variance portfolio selection problems with more realistic assumptions that can better reflect random market environment. A Markov chain is usually adopted to reflect the market status in the literature.
For instance, Zhou and Yin \cite{ZY} considered a mean-variance portfolio selection with regime switching, in which the coefficients depended on the market status but not on the Brownian motion.
In practice, however, the market parameters, such as the interest rate, stock appreciation rates and volatilities are affected by the uncertainties caused by the Brownian motion. Thus, it is too restrictive to set market parameters as constants even if the market status is known. From practical point of view, it is necessary to allow the market parameters to depend on both the Brownian motion and the Markov chain.
This paper aims to generalise Zhou and Yin's \cite{ZY} model to a constrained one, in which
the coefficients depend on both the Brownian motion and the Markov chain. We first introduce a system of risk adjust processes $H(i)$, which solves a multidimensional linear BSDEs with unbounded coefficients. We establish the existence and uniqueness of the linear system by contraction mapping method.
To the end, we solve the portfolio selection problem explicitly and completely using the results of the stochastic LQ problem that has been solved.

This paper is organised as follows. In Section 2, we formulate a stochastic LQ problem with regime switching, random coefficients, and portfolio constraint. Section 3 is concerned about the global solvability of two systems of extended stochastic Riccati equations, including existence and uniqueness for the standard and the singular cases. Section 4 gives the solution of the constrained LQ problem. In Section 5, we apply the general results to solve two mean-variance portfolio selection problems with regime switching and with/without portfolio constraints completely.
Finally, Section 6 concludes the paper.

%This template helps you to create a properly formatted \LaTeXe\ manuscript.
%%%%%%%%%%%%%%%%%%%%%%%%%%%%%%%%%%%%%%%%%%%%%%%
%%% `\ ' is used here because TeX ignores    %%
%%% spaces after text commands.              %%
%%%%%%%%%%%%%%%%%%%%%%%%%%%%%%%%%%%%%%%%%%%%%%%
%Prepare your paper in the same style as used in this sample .pdf file.
%Try to avoid excessive use of italics and bold face.
%Please do not use any \LaTeXe\ or \TeX\ commands that affect the layout
%or formatting of your document (i.e., commands like \verb|\textheight|,
%\verb|\textwidth|, etc.).

\section{Problem formulation}

Let $(\Omega, \mathcal F, \mathbb{P})$ be a fixed complete probability space on which are defined a standard $n$-dimensional Brownian motion $W(t)=(W_1(t), \ldots, W_n(t))'$ and a continuous-time stationary Markov chain $\alpha_t$ valued in a finite state space $\mathcal M=\{1, 2, \ldots, \ell\}$ with $\ell>1$. We assume $W(t)$ and $\alpha_t$ are independent processes. The Markov chain has a generator $Q=(q_{ij})_{\ell\times \ell}$ with $q_{ij}\geq 0$ for $i\neq j$ and $\sum_{j=1}^{\ell}q_{ij}=0$ for every $i\in\mathcal{M}$.
Define the filtrations $\mathcal F_t=\sigma\{W(s), \alpha_s: 0\leq s\leq t\}\bigvee\mathcal{N}$ and $\mathcal F^W_t=\sigma\{W(s): 0\leq s\leq t\}\bigvee\mathcal{N}$, where $\mathcal{N}$ is the totality of all the $\mathbb{P}$-null sets of $\mathcal{F}$.

\subsection*{Notation}
We use the following notation throughout the paper:
\begin{align*}
L^{2}_{\mathcal{F}}(\Omega;\mathbb{R})&=\Big\{\xi:\Omega\rightarrow
\mathbb{R}\;\Big|\;\xi\mbox { is }\mathcal{F}_{T}\mbox{-measurable, and }\E\big(|\xi|^{2}\big)%
<\infty\Big\}, \\
L^{\infty}_{\mathcal{F}}(\Omega;\mathbb{R})&=\Big\{\xi:\Omega\rightarrow
\mathbb{R}\;\Big|\;\xi\mbox { is }\mathcal{F}_{T}\mbox{-measurable, and essentially bounded}\Big\}, \\
L^{2}_{\mathcal F}(0, T;\mathbb{R})&=\Big\{\phi:[0, T]\times\Omega\rightarrow
\mathbb{R}\;\Big|\;\phi(\cdot)\mbox{ is an }\{\mathcal{F}%
_{t}\}_{t\geq0}\mbox{-adapted process with }\\
&\qquad\mbox{ \ \ \ \ the norm }||\phi||=\Big(\E\int_{0}^{T}|\phi(t)|^{2}dt\Big)^{\frac{1}{2}}<\infty
\Big\}, \\
L^2_{\mathcal{F}}(\Omega;C(0, T;\mathbb{R}))&=\Big\{\phi:[0, T]\times\Omega\rightarrow
\mathbb{R}\;\Big|\;\phi(\cdot)\mbox{ is an }\{\mathcal{F}%
_{t}\}_{t\geq0}\mbox{-adapted process, and}\\
&\qquad\mbox{ \ \ \ \ has continuous sample paths with }\E\Big(\sup_{t\in[0, T]}|\phi(t)|^{2}\Big)<\infty
\Big\}, \\
L^{2, \;\mathrm{loc}}_{\mathcal F}(0, T;\mathbb{R})&=\Big\{\phi:[0, T]\times\Omega\rightarrow
\mathbb{R}\;\Big|\;\phi(\cdot)\mbox{ is an }\{\mathcal{F}%
_{t}\}_{t\geq0}\mbox{-adapted process}\\
&\qquad\mbox{ \ \ \ \ with }\int_{0}^{T}|\phi(t)|^{2}dt<\infty \mbox{ almost surely (a.s.)}
\Big\}, \\
L^{\infty}_{\mathcal{F}}(0, T;\mathbb{R})&=\Big\{\phi:[0, T]\times\Omega
\rightarrow\mathbb{R}\;\Big|\;\phi(\cdot)\mbox{ is an }\{\mathcal{F}%
_{t}\}_{t\geq0}\mbox{-adapted essentially}\\
&\qquad\mbox{ \ \ \ \ bounded process} \Big\}, \\
L^{\infty}_{\mathcal{F}}(\Omega; C(0, T;\mathbb{R}))&=\Big\{\phi:[0, T]\times\Omega
\rightarrow\mathbb{R}\;\Big|\;\phi(\cdot)\mbox{ is an }\{\mathcal{F}%
_{t}\}_{t\geq0}\mbox{-adapted essentially }\\
&\qquad\mbox{ \ \ \ \ bounded process with continuous sample paths} \Big\}.
\end{align*}
These definitions are generalized in the obvious way to the cases that $\mathcal{F}$ is replaced by $\mathcal{F}^W$ and $\mathbb{R}$ by $\mathbb{R}^n$, $\mathbb{R}^{n\times m}$ or $\mathbb{S}^n$, where $\mathbb{S}^n$ is the set of symmetric $n\times n$ real matrices. If $M\in\mathbb{S}^n$ is positive definite (positive semidefinite) , we write $M>$ ($\geq$) $0.$
In our argument, $t$, $\omega$, ``almost surely'' and ``almost everywhere'', will be suppressed for simplicity in many circumstances, when no confusion occurs.

We now introduce the following scalar-valued linear stochastic differential equation (SDE):
\begin{align}
\label{state}
\begin{cases}
dX(t)=\left[A(t, \alpha_t)X(t)+B(t, \alpha_t)'u(t)\right]dt\\
\qquad\qquad+\left[C(t, \alpha_t)'X(t)+u(t)'D(t, \alpha_t)'\right]dW(t), \ t\in[0, T], \\
X(0)=x, \ \alpha_0=i_0,
\end{cases}
\end{align}
where $A(t, \omega, i), \ B(t, \omega, i), \ C(t, \omega, i), \ D(t, \omega, i)$ are all $\{\mathcal{F}^W_t\}_{t\geq 0}$-adapted processes of suitable sizes for $i\in\cM$, and $x\in\mathbb{R}$ is a given number.
Let $\Gamma\subset\mathbb{R}^m$ be a given closed cone, i.e., $\Gamma$ is closed, and if $u\in\Gamma$, then $\lambda u\in\Gamma$, for all $\lambda\geq 0$. It is the constraint set for controls.
The class of admissible controls is defined as the set
\begin{align*}
\mathcal{U}:=\Big\{u(\cdot)\in L^2_\mathcal{F}(0, T;\mathbb{R}^m)\;\Big|\; u(\cdot) \in\Gamma, \mbox{ a.e. a.s., and \eqref{state} has a unique strong solution}\Big\}.
\end{align*}
If $u(\cdot)\in\mathcal{U}$ and $X(\cdot)$ is the associated solution of \eqref{state}, then we refer to $(X(\cdot), u(\cdot))$ as an admissible pair.

Let us now state our stochastic linear quadratic optimal control problem (stochastic LQ problem, for short) as follows:
\begin{align}
\begin{cases}
\mathrm{Minimize} &\ J(x, i_0, u(\cdot))\\
\mbox{subject to} &\ (X(\cdot), u(\cdot)) \mbox{ admissible for} \ \eqref{state},
\end{cases}
\label{LQ}%
\end{align}
where the cost functional is given as the following quadratic form
\begin{align}\label{costfunctional}
J(x, i_0, u(\cdot)):=\mathbb{E}\left\{\int_0^T\Big(Q(t, \alpha_t)X(t)^2+u(t)'R(t, \alpha_t)u(t)\Big)dt
+G(\alpha_T)X(T)^2\right\}.
\end{align}

For $(x, i_0)\in\R\times\cM$, Problem \eqref{LQ} is said to be finite, if there exists $c\in\mathbb{R}$ such that
\begin{align*}
J(x, i_0, u(\cdot))\geq c, \quad \forall u(\cdot)\in\mathcal{U};
\end{align*}
and to be solvable, if there exists a control $u^*(\cdot)\in\mathcal{U}$ such that
\begin{align*}
-\infty<J(x, i_0, u^*(\cdot))\leq J(x, i_0, u(\cdot)), \quad \forall u(\cdot)\in\mathcal{U},
\end{align*}
in which case, $u^*(\cdot)$ is called an optimal control for Problem \eqref{LQ}.

Throughout this paper, we put the following assumptions on the coefficients.
\begin{assumption} \label{assump1}
For all $i\in\cM$,
\begin{align*}
\begin{cases}
A(t, \omega, i)\in L_{\mathcal{F}^W}^\infty(0, T;\mathbb{R}), \\
B(t, \omega, i)\in L_{\mathcal{F}^W}^\infty(0, T;\mathbb{R}^m), \\
C(t, \omega, i)\in L_{\mathcal{F}^W}^\infty(0, T;\mathbb{R}^n), \\
D(t, \omega, i)\in L_{\mathcal{F}^W}^\infty(0, T;\mathbb{R}^{n\times m}), \\
Q(t, \omega, i)\in L_{\mathcal{F}^W}^\infty(0, T;\mathbb{R}), \\
R(t, \omega, i)\in L_{\mathcal{F}^W}^\infty(0, T;\mathbb{S}^m), \\
G(\omega, i)\in L_{\mathcal{F}^W}^\infty(\Omega;\mathbb{R}).
\end{cases}
\end{align*}
\end{assumption}
By standard SDE theory, $\eqref{state}$ admits a unique solution $X(\cdot)\in L^2_{\mathcal{F}}(\Omega;C(0, T;\mathbb{R}))$ for any $u(\cdot)\in L^2_\mathcal{F}(0, T;\mathbb{R}^m)$ under Assumption \ref{assump1}.

The rest of the paper is devoted to the study of Problem \eqref{LQ} and its application in two portfolio selection problems.
\section{The extended stochastic Riccati equations}
To tackle Problem \eqref{LQ}, we need first to study two related multidimensional BSDEs.

For $\Lambda\in\mathbb{R}^n$ and $P\geq0$ with $PD(t, i)'D(t, i)+R(t, i)>0$, set
\begin{align*}
H_1(t, \omega, P, \Lambda, i)&=\inf_{v\in\Gamma}\big[v'(PD(t, i)'D(t, i)+R(t, i))v\\
&\qquad\qquad+2v'(PB(t, i)+PD(t, i)'C(t, i)+D(t, i)'\Lambda)\big], \\
H_2(t, \omega, P, \Lambda, i)&=\inf_{v\in\Gamma}\big[v'(PD(t, i)'D(t, i)+R(t, i))v\\
&\qquad\qquad-2v'(PB(t, i)+PD(t, i)'C(t, i)+D(t, i)'\Lambda)\big].
\end{align*}
Because $PD(t, i)'D(t, i)+R(t, i)$ is positive definite, $H_{1}$ and $H_{2}$ are well-defined, that is, $\R$-valued. Clearly, they are non-positive as $0\in\Gamma$.

\begin{remark}
For $P\geq0$ with $PD'D+R>0$, and $\Lambda\in\mathbb{R}^n$, we have the following estimates in the standard case ($R\geq\delta I_m$) (we drop the argument $(t,\omega,i)$ in this remark):
\begin{align*}
H_1(P, \Lambda)&=\inf_{v\in\Gamma}\big[v'(PD'D+R)v+2v'(PB+PD'C+D'\Lambda)\big] \\
&\geq \inf_{v\in\Gamma}\big[\delta |v|^2-2c(P+|\Lambda|)|v|\big]\\
&\geq -\frac{c^2(P+|\Lambda|)^2}{\delta},
\end{align*}
and in the singular case ($D'D\geq\delta I_m$):
\begin{align*}
H_1(P, \Lambda)&=\inf_{v\in\Gamma}\big[v'(PD'D+R)v+2v'(PB+PD'C+D'\Lambda)\big] \\
&\geq \inf_{v\in\Gamma}\big[\delta P|v|^2-2c(P+|\Lambda|)|v|\big]\\
&\geq -\frac{c^2(P+|\Lambda|)^2}{\delta P},
\end{align*}
where $c>0$, $\delta>0$ are two constants.
As $0\in\Gamma$,   we  have the trivial estimate $H_1( P, \Lambda)\leq 0$. Therefore
$|H_1(P, \Lambda)|\leq \frac{c^2(P+|\Lambda|)^2}{\delta}$ in the standard case, and
$H_1( P, \Lambda)\leq \frac{c^2(P+|\Lambda|)^2}{\delta P}$ in the singular case. This  reveals the quadratic nature of $H_1$.
Similar properties hold for  $H_2$.

If $\Gamma=\{\lambda \mathbf{e}_1|\lambda>0\}$ where $\mathbf{e}_1=(1,0,...,0)'\in\mathbb{R}^m$ is a unit vector, then $\Gamma$ is a ray (the interior is empty). In this case, we have an explicit expression for $H_1$:
\begin{align*}
H_1(P, \Lambda)&=\inf_{v\in\Gamma}\big[v'(PD'D+R)v+2v'(PB+PD'C+D'\Lambda)\big] \\
&=-\frac{((PB+PD'C+D'\Lambda)_1^-)^2}{(PD'D+R)_{11}}.
\end{align*}
If  $\Gamma=\{\lambda \mathbf{e}_1|\lambda>0\}\cup\{\lambda \mathbf{e}_2|\lambda>0\}$ where $\mathbf{e}_2=(0,1,0,...,0)'\in\mathbb{R}^m$ is also a unit vector, then $\Gamma$ is a set of two rays. And
\begin{align*}
H_1(P, \Lambda)&=\inf_{v\in\Gamma}\big[v'(PD'D+R)v+2v'(PB+PD'C+D'\Lambda)\big] \\
&=-\Big[\frac{((PB+PD'C+D'\Lambda)_1^-)^2}{(PD'D+R)_{11}}\vee\frac{((PB+PD'C+D'\Lambda)_2^-)^2}{(PD'D+R)_{22}}\Big],
\end{align*}
where $(PB+PD'C+D'\Lambda)_i$ is the $i$th element of the vector $(PB+PD'C+D'\Lambda)$ and $(PD'D+R)_{ij}$ is the $ij$th element of the $m\times m$ matrix $(PD'D+R)$, $i,j=1,...,m$.
If $\Gamma$ is a set of countable rays, we also have explicit expression for $H_1$ similarly.
\end{remark}

We introduce the following two multidimensional BSDEs (remind that the arguments $t$ and $\omega$ are suppressed):
\begin{align}
\label{P1}
\begin{cases}
dP_1(i)=-\Big[(2A(i)+C(i)'C(i))P_1(i)+2C(i)'\Lambda_1(i)+Q(i)\\
\qquad\qquad\qquad+H_1(P_1(i), \Lambda_1(i), i)+\sum\limits_{j=1}^{\ell}q_{ij}P_1(j)\Big]dt+\Lambda_1(i)'dW, \\
P_1(T, i)=G(i), \\
R(i)+P_1(i)D(i)'D(i)>0, \ \mbox{ for all $i\in\cM$};
\end{cases}
\end{align}
and
\begin{align}
\label{P2}
\begin{cases}
dP_2(i)=-\Big[(2A(i)+C(i)'C(i))P_2(i)+2C(i)'\Lambda_2(i)+Q(i)\\
\qquad\qquad\qquad+\ H_2(P_2(i), \Lambda_2(i), i)+\sum\limits_{j=1}^{\ell}q_{ij}P_2(j)\Big]dt+\Lambda_2(i)'dW, \\
P_2(T, i)=G(i), \\
R(i)+P_2(i)D(i)'D(i)>0, \ \mbox{ for all $i\in\cM$.}
\end{cases}
\end{align}

\begin{remark}\label{remark:sym1}
If $\Gamma$ is symmetric, namely, $-v\in\Gamma$ whenever $v\in\Gamma$, then
\( H_1(P, \Lambda, i)=H_2(P, \Lambda, i).\) Therefore BSDEs \eqref{P1} and \eqref{P2} coincide, and if each BSDE admits a unique solution, then $P_1=P_2$. In particular, if there is no control constraint, i.e. $\Gamma=\mathbb{R}^m$, then both $H_1$ and $H_2$ equal to
\begin{align*}
-[PB(i)'+(PC(i)+\Lambda)'D(i)](R(i)+PD(i)'D(i))^{-1}[PB(i)+D(i)'(PC(i)+\Lambda)].
\end{align*}
\end{remark}

BSDEs \eqref{P1} and \eqref{P2} are referred to as the \emph{extended stochastic Riccati equations} (ESREs). When
$\Gamma=\mathbb{R}^m$ and $\ell=1$ (namely, there is no control constraint or regime switching), then they degenerate to the stochastic Riccati equation studied in \cite{KT}.

\begin{definition}
A vector process $(P(i), \ \Lambda(i))_{i=1}^{\ell}$ is called a solution of the multidimensional BSDE \eqref{P1}, if it satisfies \eqref{P1}, and $(P(i), \ \Lambda(i))\in L^\infty_{\mathcal{F}^W}(0, T; \mathbb {R})\times L^{2}_{\mathcal F^{W}}(0, T;\mathbb{R}^{n})$ for all $i\in\cM$. The solution of BSDE system \eqref{P2} is defined similarly.
\end{definition}

Usually one would seek the solutions of \eqref{P1} and \eqref{P2} in the space $L^\infty_{\mathcal{F}^W}(0, T; \mathbb {R})\times L^{2}_{\mathcal F^{W}}(0, T;\mathbb{R}^{n})$ for all $i\in\cM$. This space, however, is not precise  enough in the proof of uniqueness.

In fact, the second part solutions $\Lambda$ of \eqref{P1} and \eqref{P2} turn out to be in the class of martingales of bounded mean oscillation, briefly called \emph{BMO martingales}. To give proper definitions of their solutions, here we recall some facts about BMO martingales; see Kazamaki \cite{Ka}. The process $\int_0^\cdot \Lambda(s)'dW(s)$ is a BMO martingale if and only if there exists a constant $c>0$ such that
\[\mathbb{E}\Bigg[\int_\tau^T|\Lambda(s)|^2ds\Big|\mathcal F_\tau^W\Bigg]\leq c\]
for all $\{\mathcal{F}_t^W\}_{t\geq 0}$-stopping times $\tau\leq T$.
The Dol$\acute{\mathrm{e}}$ans-Dade stochastic exponential
$$\mathcal E(\int_0^\cdot \Lambda(s)'dW(s))$$
of a BMO martingale $\int_0^\cdot \Lambda(s)'dW(s)$ is a uniformly integrable martingale. Moreover, if $\int_0^\cdot \Lambda(s)'dW(s)$ and $\int_0^\cdot Z(s)'dW(s)$ are both BMO martingales, then under the probability measure $\widetilde{\mathbb{P}}$ defined by $\frac{d\widetilde{\mathbb{P}}}{d\mathbb{P}}\big|_{\mathcal{F}_T}=\mathcal E \big(\int_0^T Z(s)'dW(s)\big)$, $\widetilde W(\cdot):=W(\cdot)-\int_0^\cdot Z(s)ds$ is a standard Brownian motion, and $\int_0^\cdot \Lambda(s)'d\widetilde W(s)$ is a BMO martingale.

The following space plays an important role in our argument
\begin{align*}
\BMO&=\bigg\{\Lambda \in L^{2}_{\mathcal F^{W}}(0, T;\mathbb{R}^{n}) \;\bigg|\; \int_0^\cdot\Lambda(s)'dW(s) \mbox{ is a BMO martingale on $[0, T]$}\bigg\}.
\end{align*}
\subsection{Solutions to ESREs: Existence and uniqueness}
In this section, we address ourselves to the solvability of \eqref{P1} and \eqref{P2}.

Both \eqref{P1} and \eqref{P2} are highly nonlinear multidimensional BSDEs. There are several results on the solvability of stochastic Riccati equations or quadratic BSDE systems (see, e.g., Hu and Zhou \cite{HZ}, Kohlmann and Tang \cite{KT}, Tang \cite{Ta}, Hu and Tang \cite{HT}). But up to our knowledge, no existing results could be directly applied to \eqref{P1} or \eqref{P2}, because they violate \emph{both} the standard Lipschitz condition and the quadratic growth condition.

The following comparison theorem for multidimensional BSDEs can be found in \cite{HP} (one can find a concise version in \cite{HLT}). We shall use it frequently in the study of BSDEs \eqref{P1} and \eqref{P2}.
We provide the sketch of its proof in \hyperref[appn]{Appendix}  for the reader's convenience.

\begin{lemma}
\label{comparison}
Suppose $(Y(i), Z(i))_{i}^{\ell}$, $(\overline Y(i), \overline Z(i))_{i}^{\ell}$ satisfy the following two $\ell$-dimensional BSDEs, respectively:
\begin{align*}
Y(t, i)=\xi(i)+\int_t^T f(s, Y(s, i), Y(s, -i), Z(s, i), i)ds-\int_t^T Z(s, i)'dW(s), \ \mbox{ for all $i\in\cM$;}
\end{align*}
and
\begin{align*}
\overline Y(t, i)=\overline\xi(i)+\int_t^T \overline f(s, \overline Y(s, i), \overline Y(s, -i), \overline Z(s, i), i)ds-\int_t^T \overline Z(s, i)'dW(s), \ \mbox{ for all $i\in\cM$, }
\end{align*}
where $Y(s, -i)=(Y(s, 1), \ldots, Y(s, i-1), Y(s, i+1), \ldots, Y(s, \ell))$.
Also suppose that, for all $i\in\cM$,
\begin{enumerate}
\item $\xi(i), \ \overline\xi(i)\in L^2_{\mathcal{F}^W}(\Omega;\mathbb{R})$, and $\xi(i)\leq\overline\xi(i)$;
\item there exists a constant $c>0$ such that
\[|f(s, y, z, i)-f(s, \overline y, \overline z, i)|\leq c(|y-\overline y|+|z-\overline z|), \]
for any $z, \overline z\in\mathbb{R}^n$, $y=(y(i), y(-i))$, $\overline y=(\overline y(i), \overline y(-i))\in\mathbb{R}^{\ell}$;

\item $f(s, y, z, i)$ is nondecreasing in $y(j)$, for every $i\neq j \in\cM$; and
\item $ f(s, \overline Y(s, i), \overline Y(s, -i), \overline Z(s, i), i)\leq \overline f(s, \overline Y(s, i), \overline Y(s, -i), \overline Z(s, i), i)$.

\end{enumerate}
Then $Y(t, i)\leq \overline Y(t, i)$ for a.e. $t\in[0, T]$ and all $i\in\cM$.
\end{lemma}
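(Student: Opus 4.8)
\textbf{Proof proposal for Lemma~\ref{comparison}.}
The plan is to reduce the $\ell$-dimensional comparison to a coordinatewise argument by exploiting the monotonicity assumption (3), in the classical spirit of Hu--Peng. First I would set $\widehat Y(t,i)=Y(t,i)-\overline Y(t,i)$ and $\widehat Z(t,i)=Z(t,i)-\overline Z(t,i)$, and write the BSDE satisfied by $\widehat Y(\cdot,i)$ with terminal value $\widehat\xi(i)=\xi(i)-\overline\xi(i)\le 0$ and a driver that, using assumptions (2) and (4), can be linearized: one adds and subtracts $f$ evaluated at the $\overline Y,\overline Z$ arguments, so that the difference of the two drivers decomposes into a term comparable to $c(|\widehat Y(\cdot,i)|+|\widehat Z(\cdot,i)|)$ coming from the Lipschitz property in the $i$th coordinate and in $z$, plus a ``cross'' term of the form $\sum_{j\ne i}\big(f(s,\dots,\overline Y(s,j),\dots)-f(s,\dots,Y(s,j),\dots)\big)$, plus the nonpositive quantity $f(s,\overline Y(s,i),\overline Y(s,-i),\overline Z(s,i),i)-\overline f(\dots)\le 0$ from (4). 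The monotonicity hypothesis (3) is what makes the cross term have a sign once we know the sign of $\widehat Y(\cdot,j)$.

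Next I would linearize: write $\widehat Y(t,i)=\widehat\xi(i)+\int_t^T\big[\beta(s,i)\widehat Y(s,i)+\gamma(s,i)'\widehat Z(s,i)+I_i(s)+\varphi_i(s)\big]ds-\int_t^T\widehat Z(s,i)'dW(s)$, where $\beta,\gamma$ are bounded adapted processes (bounded by $c$) obtained by the standard device of dividing increments, $\varphi_i(s)\le 0$ collects the $f-\overline f$ contribution, and $I_i(s)=f(s,\overline Y(s,i),\overline Y(s,-i),\overline Z(s,i),i)-f(s,\overline Y(s,i),Y(s,-i),\overline Z(s,i),i)$ is the cross term. Because $\gamma$ is bounded, $\int_0^\cdot\gamma(s,i)'dW(s)$ is a BMO martingale, so Girsanov applies: under $d\widetilde{\mathbb P}_i=\mathcal E(\int_0^T\gamma(s,i)'dW(s))d\mathbb P$ the process $\widetilde W_i(\cdot)=W(\cdot)-\int_0^\cdot\gamma(s,i)ds$ is a Brownian motion, and after multiplying by the integrating factor $e^{\int_0^t\beta(s,i)ds}$ and taking $\widetilde{\mathbb E}_i[\cdot\,|\mathcal F_t^W]$ one obtains the Feynman--Kac type representation
\begin{align*}
\widehat Y(t,i)=\widetilde{\mathbb E}_i\Big[e^{\int_t^T\beta(s,i)ds}\widehat\xi(i)+\int_t^T e^{\int_t^s\beta(r,i)dr}\big(I_i(s)+\varphi_i(s)\big)ds\;\Big|\;\mathcal F_t^W\Big].
\end{align*}
Since $\widehat\xi(i)\le 0$ and $\varphi_i(s)\le 0$, the only term whose sign is not yet controlled is the one involving $I_i(s)$.

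To close the argument I would run a fixed-point / iteration or a Picard-type scheme on the whole vector, or alternatively use a Gronwall-type estimate: assume by induction (on an iteration that converges to the solution, or directly on a small time interval) that $\widehat Y(s,j)\le 0$ for all $j$; then by monotonicity (3) and the fact that $f$ is nondecreasing in each $y(j)$ with $j\ne i$, we get $I_i(s)=f(s,\dots,\overline Y(s,j)_{j\ne i},\dots)-f(s,\dots,Y(s,j)_{j\ne i},\dots)\le 0$ because $\overline Y(s,j)\ge Y(s,j)$. Feeding this back into the representation above yields $\widehat Y(t,i)\le 0$ for all $i$, which propagates the induction. On a short interval $[T-h,T]$ this is immediate (the cross term can be absorbed by a contraction constant $\le c\ell h<1$), and then one patches finitely many intervals together using the terminal condition at each step; alternatively one invokes the stability/existence estimates for these linear systems directly. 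The main obstacle is precisely the handling of the coupling term $I_i(s)$: one must make sure the sign propagation is not circular, which is why the argument has to be organized either as a genuine Picard iteration started from a subsolution, or as a short-time contraction that is then globalized, rather than a naive simultaneous Gronwall inequality across coordinates. Since a detailed proof is in \cite{HP}, in the \hyperref[appn]{Appendix} I would present only this linearization-plus-Girsanov skeleton and indicate how (3) forces the cross terms to have the right sign.
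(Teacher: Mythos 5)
Your linearization--Girsanov skeleton is fine as far as it goes, but the proof is not closed: the one step that actually matters --- how to get the sign of the coupled system without circularity --- is exactly the step you defer (``organize it as a genuine Picard iteration started from a subsolution, or as a short-time contraction that is then globalized''), and the two fixes you gesture at are not carried out and have real obstacles under the stated hypotheses. The data here are only $L^2$ ($\xi(i),\overline\xi(i)\in L^2_{\mathcal F^W}(\Omega;\mathbb R)$), so a contraction in an $\esssup$-in-$\omega$ norm is not available ($\max_i\sup_t\|\widehat Y(t,i)^+\|_{L^\infty}$ need not be finite), while a contraction in $\sup_t\|\cdot\|_{L^2(\mathbb P)}$ after your change of measure requires converting conditional expectations under $\widetilde{\mathbb P}_i$ (a different measure for each coordinate $i$) back to $\mathbb P$-moments via reverse H\"older estimates on the stochastic exponentials --- none of which is addressed. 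As written, the argument ``assume $\widehat Y(s,j)\le 0$ for all $j$, deduce $I_i\le 0$, conclude $\widehat Y(t,i)\le 0$'' is the circular reasoning you yourself flag, and the proposal does not actually remove it.

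Moreover, the obstruction that pushed you onto this route does not exist: the ``naive simultaneous Gronwall inequality across coordinates'' is precisely how the paper (and Hu--Peng) proves the lemma, and it is not circular. Set $\delta Y(t,i)=Y(t,i)-\overline Y(t,i)$, apply It\^o's formula to $(\delta Y(t,i)^+)^2$, and bound the driver difference using (4) and then (2)--(3). The key point is that monotonicity plus Lipschitz continuity give the \emph{one-sided, sign-free} bound
\begin{align*}
f(s,\ldots,Y(s,j),\ldots,i)-f(s,\ldots,\overline Y(s,j),\ldots,i)\le c\,\delta Y(s,j)^+\qquad (j\neq i),
\end{align*}
valid pointwise whatever the sign of $\delta Y(s,j)$ is (if $\delta Y(s,j)\le 0$ the left side is $\le 0$ by monotonicity; if $\delta Y(s,j)>0$ it is $\le c\,\delta Y(s,j)=c\,\delta Y(s,j)^+$ by Lipschitz). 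Multiplying by $2\delta Y(s,i)^+\ge 0$, absorbing the $\delta Z(s,i)$ contribution into the negative term $-\E\int_t^T I_{\{\delta Y(s,i)\ge 0\}}|\delta Z(s,i)|^2ds$ by Young's inequality, and using $\xi(i)\le\overline\xi(i)$, one gets $\E(\delta Y(t,i)^+)^2\le c\,\E\int_t^T\sum_{j=1}^\ell(\delta Y(s,j)^+)^2ds$; summing over $i$ and applying Gronwall gives $\delta Y^+\equiv 0$ directly, with no iteration, no measure change, and only $L^2$ integrability. This positive-part trick is the missing idea in your write-up; with it, your Girsanov machinery is unnecessary, and without it, your proof is not complete.
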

We emphasis that the above lemma requires the global Lipschitz condition, which is not satisfied in some cases in our below discussion.
\par
We now prove the existence and uniqueness for the solution of BSDE \eqref{P1}. That for \eqref{P2} are similar, so we omit the details. We will treat two cases separately: (1) standard case, in which $R(i)$ is (uniformly) positive definite; (2) singular case, in which $R(i)$ is positive semidefinite but $G(i)$ and $D(i)'D(i)$ are (uniformly) positive definite. Here ``singular" means that the control weight matrix $R(i)$ in the cost functional \eqref{costfunctional} could be probably a  singular matrix.

\begin{theorem} [standard case]
\label{Riccatistandard}
Assume that $G(i)\geq0, \ Q(i)\geq0$, and $R(i)\geq\delta I_m$ with some deterministic constant $\delta>0$, for a.e. $ t\in[0, T]$ and all $i\in\cM$.
Then BSDE \eqref{P1} admits a unique solution $(P(i), \ \Lambda(i))_{i=1}^{\ell}$ such that $P(i)\geq 0$, for  all $i\in\cM$.
\end{theorem}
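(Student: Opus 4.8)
The plan is to prove existence first, then uniqueness, exploiting the specific quadratic structure of $H_1$ together with the comparison theorem (Lemma~\ref{comparison}) and BMO estimates. For \emph{existence}, I would set up a truncation/approximation scheme: replace $H_1(P,\Lambda,i)$ and the coefficients by truncated versions so that the driver becomes globally Lipschitz (cutting $P$ at level $n$ and $\Lambda$ at level $n$, say), obtaining a sequence of solutions $(P^n(i),\Lambda^n(i))_i$ by standard multidimensional Lipschitz BSDE theory. The key a priori bounds are: a \emph{lower bound} $P^n(i)\ge 0$, obtained by comparing with the subsolution $0$ (here one uses $Q(i)\ge 0$, $G(i)\ge 0$, $H_1\le 0$ — careful, $H_1\le 0$ gives an upper obstruction, so for the lower bound one actually compares $P^n$ against the solution of the linear BSDE with driver dropping $H_1$ and $Q$, which is nonnegative since $G\ge0$; the off-diagonal monotonicity in $\sum_j q_{ij}P(j)$ is exactly hypothesis (3) of Lemma~\ref{comparison}); and an \emph{upper bound} $P^n(i)\le K$ for a constant $K$ independent of $n$, obtained by comparison with the solution $\overline P(i)$ of the BSDE in which $H_1$ is dropped (legitimate because $H_1\le0$ makes the true driver smaller, so the true solution lies below), and $\overline P(i)$ is bounded by linear BSDE estimates using boundedness of $A,C,Q,G$ from Assumption~\ref{assump1}. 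Once $0\le P^n(i)\le K$ uniformly, the truncation on $P$ is inactive for $n\ge K$; then one derives a uniform BMO bound on $\Lambda^n(i)$ by applying It\^o to $(P^n(i))^2$ (or to $e^{\beta P^n}$), using the quadratic estimate $|H_1(P,\Lambda,i)|\le c^2(P+|\Lambda|)^2/\delta$ from the Remark and absorbing the $|\Lambda|^2$ term via the positive-definiteness $R\ge\delta I_m$ — this is the standard John--Nirenberg-type argument. With uniform $L^\infty$ bounds on $P^n$ and uniform BMO bounds on $\Lambda^n$, a stability result for BSDEs (the Cvitani\'c--Zhang result cited in the introduction, or a direct diagonal/monotone passage) lets me pass to the limit and obtain a solution $(P(i),\Lambda(i))_i$ with $P(i)\in L^\infty$, $\Lambda(i)\in\BMO$, $P(i)\ge0$; the constraint $R(i)+P(i)D(i)'D(i)>0$ holds automatically since $R(i)\ge\delta I_m$ and $P(i)\ge0$.

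For \emph{uniqueness}, I would take two solutions $(P(i),\Lambda(i))_i$ and $(\widehat P(i),\widehat\Lambda(i))_i$, both with $P,\widehat P\ge0$ bounded and $\Lambda,\widehat\Lambda\in\BMO$, and study the difference $\Delta(i)=P(i)-\widehat P(i)$, $\delta\Lambda(i)=\Lambda(i)-\widehat\Lambda(i)$. The difference satisfies a linear-type BSDE; the subtle term is $H_1(P(i),\Lambda(i),i)-H_1(\widehat P(i),\widehat\Lambda(i),i)$, which is locally Lipschitz in $(P,\Lambda)$ with a Lipschitz constant that is affine in $(P+\widehat P+|\Lambda|+|\widehat\Lambda|)$ — so the naive Gr\"onwall argument fails because the coefficient is only BMO, not bounded. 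This is the main obstacle, and it is where the \emph{log transformation} and the \textbf{John--Nirenberg inequality} enter: one writes the driver's $\Lambda$-dependence via a Girsanov change of measure (using that $\int_0^\cdot(\text{something})'dW$ built from the $\Lambda$-increments is BMO, hence its stochastic exponent is a genuine change of measure), under which $\delta\Lambda$ remains BMO and $\widetilde W$ is a Brownian motion, reducing the equation for $\Delta$ to one with a driver that is Lipschitz in $\Delta$ with a coefficient still only integrable. The John--Nirenberg inequality then guarantees that $\mathcal E(\int_0^\cdot c|\Lambda(s)|\,dW(s))\in L^p$ for some $p>1$, so the remaining $P$-dependent coefficient $c(P+\widehat P)+c|\Lambda|+c|\widehat\Lambda|$, which is in $L^p$ for all $p$ after exponentiation precisely because of the BMO/John--Nirenberg bound, can be handled by a weighted Gr\"onwall estimate to conclude $\Delta\equiv0$ and then $\delta\Lambda\equiv0$.

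Concretely, I expect the uniqueness step to require the following careful bookkeeping: first establish (using the quadratic estimate on $H_1$ and the standing bounds) that the solution's $\Lambda(i)$ is not merely $L^2$ but genuinely BMO with a norm controlled by $\|P(i)\|_\infty$; this is itself a short It\^o-and-absorb computation and should be stated as a preliminary claim. Then, in the difference equation, linearize $H_1$ by writing $H_1(P,\Lambda,i)-H_1(\widehat P,\widehat\Lambda,i)=\alpha(i)\Delta(i)+\beta(i)'\delta\Lambda(i)$ with $|\alpha(i)|,|\beta(i)|\le c(1+P(i)+\widehat P(i)+|\Lambda(i)|+|\widehat\Lambda(i)|)$ — such a representation exists because $v\mapsto v'Mv+2v'b$ has a minimizer depending Lipschitz-continuously on $(M,b)$ on the relevant domain, and $M,b$ are affine in $(P,\Lambda)$. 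Absorb the $\beta(i)'\delta\Lambda(i)$ term by Girsanov (legitimate: $\beta(i)$ generates a BMO martingale), and then run Gr\"onwall against $\alpha(i)$ under the new measure, using the $L^p$-integrability of the relevant exponential weight furnished by John--Nirenberg. The hardest single point is verifying that after the change of measure the exponential weights coming from $\alpha$ are integrable to a high enough power uniformly — this is exactly the payoff of working in the $\BMO$ class rather than merely $L^2$, and it is why the Definition's remark that "$L^\infty\times L^2$ is not precise enough for uniqueness" was flagged. The case $\Gamma=\mathbb{R}^m$ (no constraint) is subsumed since then $H_1$ is the explicit Riccati nonlinearity in Remark~\ref{remark:sym1}, which is a smooth function of $(P,\Lambda)$ on $\{R+PD'D>0\}$ with the same affine-in-$(P,\Lambda)$ local Lipschitz behaviour, so no separate treatment is needed.
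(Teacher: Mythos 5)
Your existence half follows essentially the paper's route (Lipschitz approximation of $H_1$, comparison via Lemma~\ref{comparison} to get $0\le P^k\le \overline P\le M$, then the monotone stability result of Cvitani\'c--Zhang to pass to the limit), and is fine in spirit; one detail is off, though: for the lower bound your proposed comparison with the linear BSDE obtained by dropping $H_1$ \emph{and} $Q$ does not satisfy hypothesis (4) of Lemma~\ref{comparison} (it would require $Q+H^k\ge 0$ along the approximating solution, which fails since $H^k$ can be very negative). Your first instinct is the correct one: compare with the zero solution, using $H^k(t,0,0,i)=0$, $Q\ge 0$, $G\ge 0$, exactly as the paper does.

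The uniqueness half has a genuine gap. You linearize $H_1(P,\Lambda,i)-H_1(\widehat P,\widehat\Lambda,i)=\alpha(i)\Delta(i)+\beta(i)'\delta\Lambda(i)$ and claim $|\alpha(i)|\le c\bigl(1+P+\widehat P+|\Lambda|+|\widehat\Lambda|\bigr)$, i.e.\ affine in the BMO quantities. This is false whenever $D\neq 0$: the minimizing $v$ has size up to $c(1+|\Lambda|+|\widehat\Lambda|)$, and the $P$-dependence of the bracket contains the term $v'D(i)'D(i)v\,(P-\widehat P)$, so the correct bound on $\alpha$ is of order $(1+|\Lambda|+|\widehat\Lambda|)^2$, with no useful sign (the map $P\mapsto PD'D+R$ is increasing, so multiplying by $\Delta$ gives a \emph{positive} quadratic-in-$\Lambda$ contribution). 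With a quadratic coefficient your weighted Gr\"onwall cannot be closed by the John--Nirenberg inequality: JN yields $\E\exp\bigl(\varepsilon\int_t^T(|\Lambda|^2+|\widehat\Lambda|^2)ds\bigr)<\infty$ only for $\varepsilon$ small relative to the BMO norms (and reverse H\"older gives $\mathcal E(\cdot)\in L^p$ for \emph{some} $p>1$, not all $p$), whereas your weight would need a \emph{fixed} constant in front of $\int|\Lambda|^2$. This is precisely the ``major technique issue'' the paper isolates, and it is why the log transformation is not decorative: setting $U=\ln(P+a)$, $V=\Lambda/(P+a)$ with $a$ so small that $R(i)-aD(i)'D(i)>0$ makes the quadratic part's coefficient $(1-ae^{-U})D'D+Re^{-U}$ \emph{decreasing} in $U$, so that piece of the difference, multiplied by $\bar U$, is $\le 0$ and can be discarded; what remains is only linear in $(|V|,|\tilde V|)$, and then the AM--GM trick with a small $\varepsilon$ plus John--Nirenberg closes the Gr\"onwall estimate (run componentwise on $\esssup_\omega\bar U(t,i)^2$ and summed over $i\in\cM$). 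You mention the log transform but never use it; as written, your uniqueness argument does not go through.
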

\begin{proof}
Existence. For $i\in\cM$, $P\in\mathbb{R}^{\ell}$, and $\Lambda\in\mathbb{R}^{n\times \ell}$, set
\begin{align*}
\overline f(t, P, \Lambda, i)=(2A(i)+C(i)'C(i)+q_{ii})P(i)+2C(i)'\Lambda(i)+Q(i)+\sum_{j\neq i}q_{ij}P(j).
\end{align*}
As $\overline f$ is linear in $P$ and $\Lambda$, there exists a unique solution $(\overline P(i), \ \overline\Lambda(i))_{i=1}^{\ell}$ to the corresponding BSDE with the generator $\overline f$ and terminal value $G$. By Assumption \ref{assump1}, there exists a constant $c>0$, such that
\begin{align*}
2A(i)+C(i)'C(i)+\max_{k, j\in \cM}|q_{kj}|\leq c, \ Q(i)\leq c, \ G(i)\leq c, \ \mbox{ for a.e. $t\in[0, T]$ and all } i\in\cM.
\end{align*}
Hereafter, we shall use $c$ to represent a generic positive constant independent of $i$, $n$ and $t$, which can be different from line to line.

The following $\ell$-dimensional BSDE
\begin{align*}
\begin{cases}
dP(i)=-\Big[c\sum_{j=1}^{\ell}P(j)+2C(i)\Lambda(i)+c\Big]dt+\Lambda(i)'dW, \\
P(i, T)=c, \ \mbox{ for all $i\in\cM$, }
\end{cases}
\end{align*}
admits a unique solution $\left(\frac{(c\ell+1)e^{c\ell(T-t)}-1}{\ell}, 0\right)_{i=1}^{\ell}$. By Lemma \ref{comparison}, we have
\[
\overline P(t, i)\leq \frac{(c\ell+1)e^{c\ell(T-t)}-1}{\ell}\leq M, \ \mbox{ for a.e. $t\in[0, T]$ and all } i\in\cM.
\]
where $M=\frac{(c\ell+1)e^{c\ell T}-1}{l}$.

For $k\geq1$, $(t, P, \Lambda)\in[0, T]\times\mathbb R\times\mathbb{R}^n$, $ i\in\cM$, define
\[
H^k(t, P, \Lambda, i)=\sup_{\tilde P\in\mathbb R, \tilde\Lambda\in\mathbb R^n}\Big\{H_1 (t, \tilde P, \tilde\Lambda, i)-k|P-\tilde P|-k|\Lambda-\tilde\Lambda|\Big\}.
\]
Then it is non-positive and uniformly Lipschitz in $(P, \Lambda)$, and decreasingly approaches to $H_1(t, P, \Lambda, i)$ as $k$ goes to infinite.

The following BSDE
\begin{align*}
\begin{cases}
dP^k(i)=-\Big[
\overline f(P^k, \Lambda^k, i)+H^k(P^k(i), \Lambda^k(i), i)\Big]dt+\Lambda^k(i)'dW, \\
P^k(i, T)=G(i), \ \mbox{ for all $i\in\cM$, }
\end{cases}
\end{align*}
is an $\ell$-dimensional BSDE with a Lipschitz generator, so it admits a unique solution, denoted by $\big(P^k(i), \Lambda^k(i)\big)_{i=1}^{\ell}$.
Notice that $H^k(t, 0, 0, i)=0, \ Q\geq0, \ G\geq 0$, and
\[\overline f(t, P, \Lambda, i)+H^k(t,P(i), \Lambda(i), i) \leq \overline f(t, P, \Lambda, i), \]
then by Lemma \ref{comparison}, we have
\begin{align*}
\label{Pbound}
0\leq P^k(t, i)\leq\overline P(t, i)\leq M,
\end{align*}
and $P^k(t, i)$ is decreasing in $k$, for each $i\in\cM$.

Let $P(t, i)=\lim\limits_{k\rightarrow\infty}P^k(t, i)$, $i\in\cM$.
It is important to note that we can regard $\big(P^k(i), \Lambda^k(i)\big)$ as the solution of a scalar-valued quadratic BSDE for each $i\in\cM$. Thus by Lemma 9.6.6 in \cite{CZ}, there exists a process $\Lambda\in L^{2}_{\mathcal F^{W}}(0, T;\mathbb{R}^{n\times \ell})$ such that $(P, \Lambda)$ is a solution to BSDE \eqref{P1}. We have now established the existence of the solution.\\

Next, let us prove the uniqueness.

Step 1: For any solution $(P(i), \ \Lambda(i))\in L^\infty_{\mathcal{F}^W}(0, T; \mathbb {R}^+)\times L^{2}_{\mathcal F^{W}}(0, T;\mathbb{R}^{n})$ of \eqref{P1}, we have a more precise estimate $\Lambda(i)\in\BMO$, for all $i\in\cM$.

Actually applying It\^{o}'s formula to $P(i)^2$, we get, for any $\{\mathcal F^W_t\}_{t\geq 0}$ stopping time $\tau\leq T$,
\begin{align*}
&\E\Big[\int_\tau^T|\Lambda(i)|^2ds\Big|\mathcal{F}^W_\tau\Big]=\E[G(i)^2|\mathcal{F}^W_\tau]-P(t,i)^2
+\E\Big[\int_\tau^T2P(i)\Big[(2A(i)+|C(i)|^2)P(i)\\
&\qquad\qquad\qquad\qquad\qquad+2 C(i)'\Lambda(i)+Q(i)+H_1(P(i),\Lambda(i),i)+\sum_{i=1}^\ell q_{ij}P(j)\Big]ds\Big|\mathcal{F}^W_\tau\Big].
\end{align*}
Note that $H_1\leq0$, Assumption \ref{assump1} and $P$ is uniformly bounded, then
\begin{align*}
\E\Big[\int_\tau^T|\Lambda(s,i)|^2ds\Big|\mathcal{F}^W_\tau\Big]
&\leq c+\E\Big[\int_\tau^T\Big[c+2 c|\Lambda(s,i)|\Big]ds\Big|\mathcal{F}^W_\tau\Big]\\
&\leq c+\frac{1}{2}\E\Big[\int_\tau^T|\Lambda(s,i)|^2ds\Big|\mathcal{F}^W_\tau\Big].
\end{align*}
Thus $\Lambda(i)\in\BMO$, for all $i\in\cM$.\\

Step 2: Log transformation of the BSDE \eqref{P1}.

Suppose $(P(i), \ \Lambda(i))_{i=1}^{\ell}$, $(\tilde P(i), \ \tilde\Lambda(i))_{i=1}^{\ell}$ are two solutions of $\eqref{P1}$. Then there exists a constant $M>0$ such that $0\leq P(i)$, $\tilde P(i)\leq M$, and $\int_0^\cdot\Lambda(s, i)'dW(s)$, $\int_0^\cdot\tilde\Lambda(s, i)'dW(s)$ are BMO-martingales, for all $i\in\cM$.

For every $i\in\cM$, define processes
\begin{align*}
&(U(t, i), V(t, i))=\left(\ln (P(t, i)+a), \frac{\Lambda(t, i)}{P(t, i)+a}\right), \\
&(\tilde U(t, i), \tilde V(t, i))=\left(\ln (\tilde P(t, i)+a), \frac{\tilde \Lambda(t, i)}{\tilde P(t, i)+a}\right), \ \mbox{ for } t\in[0, T],
\end{align*}
where $a>0$ is a constant to be determined later.
Then $(U(i), V(i))$, $(\tilde U(i), \tilde V(i))\in L^\infty_{\mathcal{F}^W}(0, T; \mathbb {R})\times \BMO$, for all $ i\in\cM$. Furthermore, by It\^{o}'s formula, $(U(i), V(i))_{i=1}^{\ell}$ satisfy the following multidimensional BSDE:
\begin{align*}
\begin{cases}
dU(i)=-\Big[(2A(i)+C(i)'C(i))(1-ae^{-U(i)})+2C(i)'V(i)+Q(i)e^{-U(i)}\\
\qquad\qquad\qquad+\tilde H(U(i), V(i), i)+\frac{1}{2}V(i)'V(i)+\sum\limits_{j=1}^{\ell}q_{ij}e^{U(j)-U(i)}\Big]dt+V(i)'dW, \\
U(T, i)=\ln (G(i)+a), \ \mbox{ for all $i\in\cM$, }
\end{cases}
\end{align*}
where
\begin{align*}
\tilde H(U, V, i)&=\inf_{v\in\Gamma}\Big[v'((1-ae^{-U})D(i)'D(i)+R(i)e^{-U})v\\
&\qquad\qquad+2v'((1-ae^{-U})(B(i)+D(i)'C(i))+D(i)'V)\Big].
\end{align*}
Similar for $(\tilde U(i), \tilde V(i))_{i=1}^{\ell}$.

As $0\leq P(i)\leq M$, thus $e^{-U(i)}=\frac{1}{P(i)+a}\geq\frac{1}{M+a}$ and $1-ae^{-U(i)}=\frac{P(i)}{P(i)+a}\in[0, 1)$. Similar inequalities hold when $U(i)$ is replaced by $\tilde U(i)$. By Assumption \ref{assump1} and $R\geq\delta I_m$, there exist constant $c>0$ such that
\begin{align*}
&\ \ \ \ v'((1-ae^{-U(i)})D(i)'D(i)+R(i)e^{-U(i)})v\\
&\qquad\qquad+2v'((1-ae^{-U(i)})(B(i)+D(i)'C(i))+D(i)'V(i))\\
&\geq v'R(i)e^{-U(i)}v+2v'((1-ae^{-U(i)})(B(i)+D(i)'C(i))+D(i)'V(i))\\
&\geq \frac{\delta}{M+a} |v|^2-c(1+|V(i)|)|v|.
\end{align*}
Hence if $|v|>\frac{c(M+a)}{\delta}(1+|V|):=c(1+|V|)$, then
\[\frac{\delta}{M+a} |v|^2-c(1+|V|)|v|>0\geq \tilde H(U, V, i), \]
for $(U, V)=(U(t, i), V(t, i))$ and $(\tilde U(t, i), \tilde V(t, i))$. Thus,
\begin{align*}
\tilde H(U, V, i)&=\inf_{\substack{v\in\Gamma\\ |v|\leq c(1+|V|)}}\Big[v'((1-ae^{-U})D(i)'D(i)+R(i)e^{-U})v\nonumber\\
&\qquad\qquad\qquad\quad+2v'((1-ae^{-U})(B(i)+D(i)'C(i))+D(i)'V)\Big].
\end{align*}

Step 3: Estimate the difference between $U(i)$ and $\tilde U(i)$.

Set $\bar U(i)=U(i)-\tilde U(i), \ \bar V(i)=V(i)-\tilde V(i)$, for $i\in\cM$. Then $(\bar U(i), \ \bar V(i))_{i}^{\ell}$ satisfy the following BSDE:
\begin{align*}
\begin{cases}
d\bar U(i)=-\Big[(Q(i)-2aA(i)-aC(i)'C(i))(e^{-U(i)}-e^{-\tilde U(i)})+2C(i)'\bar V(i)\\
\qquad\qquad\qquad+\tilde H(U(i), V(i), i)-\tilde H(\tilde U(i), \tilde V(i), i)+\frac{1}{2}(V(i)+\tilde V(i))'\bar V(i) \\
\qquad\qquad\qquad+\sum\limits_{j=1}^{\ell}q_{ij}(e^{U(j)-U(i)}-e^{\tilde U(j)-\tilde U(i)})\Big]dt+\bar V(i)'dW, \\
\bar U(T,i)=0, \ \mbox{ for all $i\in\cM$.}
\end{cases}
\end{align*}
Applying It\^{o}'s formula to $\bar U(i)^2$, we deduce that
\begin{align*}
\bar U(t, i)^2&=\int_t^T\Big\{2\bar U(i)\Big[(Q(i)-2aA(i)-aC(i)'C(i))(e^{-U(i)}-e^{-\tilde U(i)})+2C(i)'\bar V(i)\\
&\qquad\qquad+\frac{1}{2}(V(i)+\tilde V(i))'\bar V(i)+\sum\limits_{j=1}^{\ell}q_{ij}(e^{U(j)-U(i)}-e^{\tilde U(j)-\tilde U(i)})\Big]-\bar V(i)'\bar V(i)\\
&\qquad\qquad+2\bar U(i)(\tilde H(U(i), V(i), i)-\tilde H(\tilde U(i), \tilde V(i), i))\Big\}ds-\int_t^T2\bar U(i)\bar V(i)'dW\\
&:=\int_t^T\Big[L(i)+2\bar U(i)\sum\limits_{j=1}^{\ell}q_{ij}(e^{U(j)-U(i)}-e^{\tilde U(j)-\tilde U(i)})\Big]ds-\int_t^T2\bar U(i)\bar V(i)'dW.
\end{align*}
Let us now estimate $\bar U(i)\left(\tilde H(U(i), V(i), i)-\tilde H(\tilde U(i), \tilde V(i), i)\right)$.
Here we encounter the major technique issue of the paper.
If $\tilde H(U, V, i)$ was decreasing in $U$, then
\begin{align*}
&\quad\bar U(i)\left(\tilde H(U(i), V(i), i)-\tilde H(\tilde U(i), \tilde V(i), i)\right) \\
&=(U(i)-\tilde U(i))\left(\tilde H(U(i), V(i), i)-\tilde H(\tilde U(i), V(i), i)+\tilde H(\tilde U(i), V(i), i)-\tilde H(\tilde U(i), \tilde V(i), i)\right) \\
&\leq (U(i)-\tilde U(i))\left(\tilde H(\tilde U(i), V(i), i)-\tilde H(\tilde U(i), \tilde V(i), i)\right),
\end{align*}
which would be  growth in $\bar V(i)$ quadratically at most. Unfortunately we do not have the monotonicity of $\tilde H(U, V, i)$ in $U$.
To overcome this difficultly, we treat the quadratic term and linear term separately. Let
\begin{align*}
\cH(v, \tilde U, U, V, i)&=v'((1-ae^{-\tilde U})D(i)'D(i)+R(i)e^{-\tilde U})v\\
&\qquad\qquad+2v'((1-ae^{-U})(B(i)+D(i)'C(i))+D(i)'V).
\end{align*}
Then rewrite $\tilde H$ in terms of $\cH$,
\begin{align*}
&\qquad\tilde H(U(i), V(i), i)-\tilde H(\tilde U(i), \tilde V(i), i)\\
&=\inf_{\substack{v\in\Gamma\\ |v|\leq c(1+|V(i)|)}} \cH(v, U, U, V, i)-\inf_{\substack{v\in\Gamma\\ |v|\leq c(1+|\tilde V(i)|)}} \cH(v, \tilde U, \tilde U, \tilde V, i).
\end{align*}
The above optimal values will not change when we enlarge the optimization region, so,
after inserting two zero-sum terms, we get
\begin{align} \label{ineqH1}
&\qquad\;\tilde H(U(i), V(i), i)-\tilde H(\tilde U(i), \tilde V(i), i) \nonumber\\
&=\inf_{\substack{v\in\Gamma\\ |v|\leq c(1+|V(i)|+|\tilde V(i)|)}} \cH(v, U, U, V, i)-\inf_{\substack{v\in\Gamma\\ |v|\leq c(1+|V(i)|+|\tilde V(i)|)}}\cH(v, \tilde U, U, V, i)\nonumber\\
&\ \ \ \ \ \ \ \ \ +\inf_{\substack{v\in\Gamma\\ |v|\leq c(1+|V(i)|+|\tilde V(i)|)}}\cH(v, \tilde U, U, V, i)-\inf_{\substack{v\in\Gamma\\ |v|\leq c(1+|V(i)|+|\tilde V(i)|)}} \cH(v, \tilde U, \tilde U, \tilde V, i).
\end{align}

Let $a>0$ be a sufficiently small constant such that $R(i)-aD(i)'D(i)>0$ for all $i\in\cM$. Then the map
\[x\mapsto v'((1-ae^{-x})D(i)'D(i)+R(i)e^{-x})v, \quad x\in\R, \]
is decreasing for every $i\in\cM$. Therefore,
\begin{align} \label{ineqH2}
(U(i)-\tilde U(i))\Big(\inf_{\substack{v\in\Gamma\\ |v|\leq c(1+|V(i)|+|\tilde V(i)|)}} \cH(v, U, U, V, i)-\inf_{\substack{v\in\Gamma\\ |v|\leq c(1+|V(i)|+|\tilde V(i)|)}} \cH(v, \tilde U, U, V, i)\Big)\leq 0.
\end{align}
On the other hand, by the boundedness of $U$ and $\tilde U$,
\begin{align} \label{ineqH3}
&\qquad\; \bigg|\inf_{\substack{v\in\Gamma\\ |v|\leq c(1+|V(i)|+|\tilde V(i)|)}}\cH(v, \tilde U, U, V, i)-\inf_{\substack{v\in\Gamma\\ |v|\leq c(1+|V(i)|+|\tilde V(i)|)}}\cH(v, \tilde U, \tilde U, \tilde V, i)\bigg| \nonumber\\
&\leq \sup_{\substack{v\in\Gamma\\ |v|\leq c(1+|V(i)|+|\tilde V(i)|)}} \big|\cH(v, \tilde U, U, V, i)-\cH(v, \tilde U, \tilde U, \tilde V, i)\big|\nonumber\\
&=\sup_{\substack{v\in\Gamma\\ |v|\leq c(1+|V(i)|+|\tilde V(i)|)}}\Big|2v'\Big(a(e^{-U}-e^{-\tilde U})(B(i)+D(i)'C(i))+D(i)'\bar V\Big)\Big|\nonumber\\
&\leq c \sup_{\substack{ |v|\leq c(1+|V(i)|+|\tilde V(i)|)}}|v|(|\bar U(i)|+|\bar V(i)|)\nonumber\\
&\leq c (1+|V(i)|+|\tilde V(i)|)(|\bar U(i)|+|\bar V(i)|).
\end{align}
Using \eqref{ineqH1}, \eqref{ineqH2} and \eqref{ineqH3}, we get
\begin{align*}
&\quad\bar U(i)\left(\tilde H(U(i), V(i), i)-\tilde H(\tilde U(i), \tilde V(i), i)\right)\\
&\leq c|\bar U(i)|(1+|V(i)|+|\tilde V(i)|)(|\bar U(i)|+|\bar V(i)|)\\
&=:c(1+|V(i)|+|\tilde V(i)|)\bar U(i)^2+c\beta(i)'\bar U(i)\bar V(i),
\end{align*}
where $\beta(i)$ is an $\mathcal F^W_t$-adapted process such that $|\beta(i)|\leq 1+|V(i)|+|\tilde V(i)|$. Then
\begin{align*}
L(i)&\leq 2\bar U(i)\Big[(Q(i)-2aA(i)-aC(i)'C(i))(e^{-U(i)}-e^{-\tilde U(i)})+2C(i)'\bar V(i)\nonumber\\
&\qquad+\frac{1}{2}(V(i)+\tilde V(i))'\bar V(i)\Big]-\bar V(i)'\bar V(i)\\
&\qquad+c(1+|V(i)|+|\tilde V(i)|)\bar U(i)^2+c\beta(i)' \bar U(i)\bar V(i)\\
&\leq c(1+|V(i)|+|\tilde V(i)|)\bar U(i)^2+2\bar U(i)\bar V(i)'\Big(2C(i)+\frac{1}{2}(V(i)+\tilde V(i))+c\beta(i)\Big),
\end{align*}
for all $i\in\cM$.

For each fixed $i\in\cM$, let us introduce the processes
\begin{align*}
J(t, i)=\exp\left(\int_0^tc(1+|V(i)|+|\tilde V(i)|)ds\right),
\end{align*}
and
\begin{align*}
N(t, i)=\mathcal{E}\left(\int_0^t\Big(2C(i)+\frac{1}{2}(V(i)+\tilde V(i))+c\beta(i)\Big)'dW(s)\right).
\end{align*}
Note that $N(t, i)$ is a uniformly integrable martingale. Thus
\[\widetilde W^i(t):=W(t)-\int_0^t\Big(2C(i)+\frac{1}{2}(V(i)+\tilde V(i))+c\beta(i)\Big)ds, \]
is a Brownian motion under the probability $\widetilde{\mathbb P}^{i}$ defined by
\begin{align*}
\frac{d\widetilde{\mathbb P}^{i}}{d\mathbb P}\Bigg|_{\mathcal{F}^W_T}=N(T, i).
\end{align*}
It\^{o}'s formula gives us, for any $\{\mathcal F^W_t\}_{t\geq 0}$-stopping time $\tau$ such that $0\leq t\leq\tau\leq T$,
\begin{align*}
&\quad\; J(t, i)N(t, i)\bar U(t, i)^2\\
&\leq J(\tau, i) N(\tau, i)\bar U(\tau, i)^2+2\int_t^\tau J(i)N(i)\bar U(i)\sum\limits_{j=1}^{\ell}q_{ij}(e^{U(j)-U(i)}-e^{\tilde U(j)-\tilde U(i)})ds\\
&\quad\;\;-\int_t^\tau\left(J(i)N(i)\bar U(i)^2\Big(2C(i)+\frac{1}{2}(V(i)+\tilde V(i))+c\beta(i)\Big)+2J(i)N(i)\bar U(i)\bar V(i)\right)'dW.
\end{align*}
Let us consider, for $n\geq 1$, the stopping time
\begin{align*}
\tau_n&=\inf\Bigg\{u\geq t:\int_t^u\bigg|J(i)N(i)\bar U(i)^2\Big(2C(i)+\frac{1}{2}(V(i)+\tilde V(i))+c\beta(i)\Big)\\
&\qquad\qquad\qquad\quad\qquad+2J(i)N(i)\bar U(i)\bar V(i)\bigg|^2ds\geq n\Bigg\}\wedge T.
\end{align*}
We get from the previous equation and the arithmetic-mean and geometric-mean inequality (AM-GM inequality),
\begin{align}
\label{JNinequality}
\bar U(t, i)^2&\leq \E\Bigg[\frac{J(\tau_n, i) N(\tau_n, i)\bar U(\tau_n, i)^2}{J(t, i)N(t, i)}\nonumber\\
&\qquad\qquad+2\int_t^{\tau_{n}} \frac{J(s, i)N(s, i)}{J(t, i)N(t, i)}\bar U(i)\sum\limits_{j=1}^{\ell}q_{ij}(e^{U(j)-U(i)}-e^{\tilde U(j)-\tilde U(i)})ds\;\bigg|\;\mathcal{F}^W_t\Bigg]\nonumber\\
&\leq \E\Bigg[\frac{J(\tau_n, i) N(\tau_n, i)\bar U(\tau_n, i)^2}{J(t, i)N(t, i)}+c\int_t^{\tau_{n}} \frac{J(s, i)N(s, i)}{J(t, i)N(t, i)}\sum\limits_{j=1}^{\ell}\bar U(s, j)^2ds\;\bigg|\;\mathcal{F}^W_t\Bigg]\nonumber\\
&=\widetilde\E^i\Bigg[\frac{J(\tau_n, i) \bar U(\tau_n, i)^2}{J(t, i)}+c\int_t^{\tau_{n}} \frac{J(s, i)}{J(t, i)}\sum\limits_{j=1}^{\ell}\bar U(s, j)^2ds\;\bigg|\;\mathcal{F}^W_t\Bigg]\nonumber\\
&=\widetilde\E^i\Bigg[\bar U(\tau_n, i)^2\exp\left(\int_t^{\tau_n} c(1+|V(i)|+|\tilde V(i)|)ds\right)\nonumber\\
&\qquad\qquad+c\int_t^{\tau_n} \sum\limits_{j=1}^{\ell}\bar U(s, j)^2\exp\left(\int_t^{s} c(1+|V(i)|+|\tilde V(i)|)du\right)ds\;\bigg|\;\mathcal{F}^W_t\Bigg]\nonumber\\
&\leq c \widetilde\E^i\Bigg[\bigg(\bar U(\tau_n, i)^2+\int_t^{T} \sum\limits_{j=1}^{\ell}\bar U(s, j)^2ds\bigg)\exp\left(\int_t^{T} c(|V(i)|+|\tilde V(i)|)du\right)\;\bigg|\;\mathcal{F}^W_t\Bigg]
\end{align}
where $\widetilde\E^i$ is the expectation w.r.t. the probability measure $\widetilde{\mathbb P}^i$.
By the AM-GM inequality, we see
\begin{align*}
&\quad\; \widetilde\E^i\left[\exp\left(\int_t^{T} c(|V(i)|+|\tilde V(i)|)ds\right)\Bigg|\mathcal{F}^W_t\right] \\
&\leq \widetilde\E^i\left[\exp\left(\int_t^{T} \Big(\varepsilon_i (|V(i)|+|\tilde V(i)|)^2+\frac{c^2}{4\varepsilon_i}\Big)ds\right)\Bigg|\mathcal{F}^W_t\right],
\end{align*}
which is finite by the John-Nirenberg inequality when $\varepsilon_i>0$ is sufficient small.
Using boundedness of $\bar U$ and the dominated convergent theorem, sending $n$ to infinity in \eqref{JNinequality} gives
\begin{align*}%\label{ineqgamma}
\bar U(t, i)^2
&\leq c \widetilde\E^i\Bigg[\bigg(\int_t^{T} \sum\limits_{j=1}^{\ell}\bar U(s, j)^2ds\bigg)\exp\left(\int_t^{T} c(|V(i)|+|\tilde V(i)|)du\right)\;\bigg|\;\mathcal{F}^W_t\Bigg]\\
&\leq c\widetilde\E^i\Bigg[\exp\left(\int_t^{T} c(|V(i)|+|\tilde V(i)|)ds\right)\Bigg|\mathcal{F}^W_t\Bigg]\int_t^{T} \sum\limits_{j=1}^{\ell} E(s, j)ds\\
&\leq c\int_t^{T} \sum\limits_{j=1}^{\ell} E(s, j)ds,
\end{align*}
where
\[E(t, i)=\underset{\omega\in\Omega}{\esssup} \ {\bar U(t, i)^2}.\]
Taking essential supreme on both sides, we deduce
\[E(t, i)\leq c \int_t^T \sum_{j=1}^{\ell}E(s, j)ds.\]
Thus
\[0\leq \sum_{j=1}^{\ell}E(t, j)\leq cl\int_t^T \sum_{j=1}^{\ell}E(s, j)ds.\]
We infer from Gronwall's inequality that $\sum_{j=1}^{\ell}E(t, j)=0$, so
$\bar U(t, i)=0$ for a.e. $t\in[0, T]$ and all $i\in\cM$. This completes the proof of the uniqueness.
\end{proof}

\begin{theorem}[singular case]
\label{Riccatisingular}
Assume that $Q(i)\geq0$, $R(i)\geq0, \ G(i)\geq\delta$, and $D(i)'D(i)\geq\delta I_m$ with some deterministic constant $\delta>0$, for a.e. $t\in[0, T]$ and all $i\in\cM$. Then BSDE \eqref{P1} admits a unique solution such that $P(i)\geq c$ for some constant $c>0$, for all $i\in\cM$.
\end{theorem}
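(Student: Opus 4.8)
The plan is to mirror the proof of Theorem \ref{Riccatistandard} --- existence through a monotone approximation, uniqueness through a logarithmic transformation --- the genuinely new ingredient being a \emph{uniform, strictly positive} lower bound on the approximating solutions.

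\emph{Existence.} For $\varepsilon\in(0,1]$ I would regularise the (possibly singular) weight by replacing $R(i)$ with $R_\varepsilon(i)=R(i)+\varepsilon I_m\ge\varepsilon I_m$; writing $H_1^\varepsilon$ for the associated Hamiltonian, Theorem \ref{Riccatistandard} furnishes a unique solution $(P_\varepsilon(i),\Lambda_\varepsilon(i))_{i=1}^{\ell}$ of the corresponding ESRE with $P_\varepsilon(i)\ge0$. Since $H_1^\varepsilon\le0$, the generators are still dominated by the linear $\overline f$ of Theorem \ref{Riccatistandard}, so $0\le P_\varepsilon(i)\le M$ with $M$ independent of $\varepsilon$; and since $R_\varepsilon(i)$, hence $H_1^\varepsilon$, is nondecreasing in $\varepsilon$ on $\{P\ge0\}$, Lemma \ref{comparison} (applied to the Lipschitz regularisations, as inside the proof of Theorem \ref{Riccatistandard}) gives $P_\varepsilon(i)\downarrow P(i):=\lim_{\varepsilon\downarrow0}P_\varepsilon(i)\ge0$. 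Granting the lower bound $P_\varepsilon(i)\ge c>0$ below, the limit $\varepsilon\downarrow0$ is handled exactly as in Theorem \ref{Riccatistandard}: each coordinate is a scalar quadratic BSDE with bounded, convergent ``coefficients'' $\sum_{j\ne i}q_{ij}P_\varepsilon(j)$, Lemma 9.6.6 in \cite{CZ} produces $\Lambda$, and the constraint survives because $P(i)D(i)'D(i)+R(i)\ge cD(i)'D(i)\ge c\delta I_m>0$. This gives a solution of \eqref{P1} with $c\le P(i)\le M$.

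\emph{The uniform lower bound --- the main obstacle.} Fix $\varepsilon$. One first checks $P_\varepsilon(i)>0$ on $[0,T]$ (apply the estimate below on $[\sigma,T]$, with $\sigma$ the last time $P_\varepsilon(i)$ vanishes, to contradict $P_\varepsilon(T,i)=G(i)\ge\delta$), so that $\Pi_\varepsilon(i):=1/P_\varepsilon(i)$ and $\Theta_\varepsilon(i):=-\Lambda_\varepsilon(i)/P_\varepsilon(i)^2$ are well defined, with $\Pi_\varepsilon(i)\ge1/M$ and $\Pi_\varepsilon(T,i)=1/G(i)\le1/\delta$. By It\^o's formula $(\Pi_\varepsilon(i),\Theta_\varepsilon(i))_{i=1}^{\ell}$ solves a BSDE system whose generator --- after using $H_1^\varepsilon\le0$, $Q(i)\ge0$, $P_\varepsilon(j)\ge0$ and $\Pi_\varepsilon\ge1/M$ --- is bounded above by $c\bigl(1+\Pi_\varepsilon(i)\bigr)+c|\Theta_\varepsilon(i)|^2$ with $c$ independent of $\varepsilon$; here the coercivity $D(i)'D(i)\ge\delta I_m$ plays exactly the role $R(i)\ge\delta I_m$ played in the standard case, being what allows one to dominate the sole dangerous term $-\Pi_\varepsilon(i)^2H_1^\varepsilon\bigl(P_\varepsilon(i),\Lambda_\varepsilon(i),i\bigr)$. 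A standard a priori upper estimate for this quadratic BSDE with bounded terminal value then gives $\Pi_\varepsilon(i)\le K$, $K$ independent of $\varepsilon$, i.e.\ $P_\varepsilon(i)\ge1/K=:c>0$. I expect the delicate points to be showing that $-\Pi_\varepsilon^2H_1^\varepsilon$ is genuinely controlled using $D'D\ge\delta I_m$ alone (no lower bound on $R$), and that the resulting estimate is $\varepsilon$-uniform.

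\emph{Uniqueness.} Any solution has $\Lambda(i)\in\BMO$ (Step 1 of Theorem \ref{Riccatistandard}) and, by the estimates above, $0<c\le P(i)\le M$. Now apply the logarithmic transformation \emph{without the shift} $a$, namely $U(i)=\ln P(i)$, $V(i)=\Lambda(i)/P(i)$ (which lie in $L^\infty_{\mathcal F^W}(0,T;\R)\times\BMO$); by It\^o, $(U(i),V(i))_{i=1}^{\ell}$ solves a BSDE whose Hamiltonian part is
\begin{align*}
\widehat H(U,V,i)=\inf_{v\in\Gamma}\Bigl[v'\bigl(D(i)'D(i)+e^{-U}R(i)\bigr)v+2v'\bigl(B(i)+D(i)'C(i)+D(i)'V\bigr)\Bigr].
\end{align*}
The decisive simplification is that, because $R(i)\ge0$, the map $U\mapsto\widehat H(U,V,i)$ is nonincreasing --- precisely the monotonicity that was \emph{unavailable} in Theorem \ref{Riccatistandard} and forced the intricate $\cH$-splitting there. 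With this monotonicity, and the same cone truncation $|v|\le c(1+|V|)$ (legitimate since $D'D\ge\delta I_m$), for two solutions with transforms $(U(i),V(i))$ and $(\tilde U(i),\tilde V(i))$ the $\widehat H$-contribution to the It\^o expansion of $\bar U(i)^2=(U(i)-\tilde U(i))^2$ is at most $c\,|\bar U(i)|\,(1+|V(i)|+|\tilde V(i)|)\,|\bar V(i)|$ --- a pure cross term in $\bar V(i)$ with a BMO coefficient. Together with the remaining linear-in-$\bar V$ and Lipschitz-in-$\bar U$ terms, a change of measure removing all the $\bar V$-coefficients (valid since they are BMO processes), followed by Gronwall's inequality for $E(t,i)=\esssup_{\omega}\bar U(t,i)^2$, forces $\bar U\equiv0$, hence $P=\tilde P$ and $\Lambda=\tilde\Lambda$. (If one prefers to mirror Theorem \ref{Riccatistandard} verbatim, the John--Nirenberg inequality can again be invoked; the monotonicity above makes it dispensable.)
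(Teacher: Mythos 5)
Your uniqueness argument is fine and is essentially the paper's: the paper also proves uniqueness by rerunning the proof of Theorem \ref{Riccatistandard} with $a=0$, where the troublesome difference term reduces to one involving only $|\bar V(i)|$ (this is exactly your monotonicity observation, since with $a=0$ the quadratic form $x\mapsto v'(D'D+R e^{-x})v$ is nonincreasing because $R\geq0$), so the John--Nirenberg inequality is indeed dispensable. The existence part, however, is where your route diverges from the paper and where there is a genuine gap: the uniform strictly positive lower bound $P_\varepsilon(i)\geq c>0$. First, to even define $\Pi_\varepsilon=1/P_\varepsilon$ you need $P_\varepsilon(i)>0$, and your device (``the last time $P_\varepsilon(i)$ vanishes'') is a last-exit time, not a stopping time, and the contradiction you want to extract from it presupposes the very estimate you are establishing. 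Second, and more seriously, the ``standard a priori upper estimate'' for the quadratic BSDE satisfied by $\Pi_\varepsilon$ is not available here: such estimates (exponential transform, Kobylanski-type bounds) apply to solutions already known to be bounded or to have suitable exponential moments, whereas boundedness of $\Pi_\varepsilon$ is precisely the conclusion. If you try to localize at $\tau_n=\inf\{s: \Pi_\varepsilon(s,i)\geq n\}$, the stopped ``terminal'' value sits at level $n$ and the bound does not close as $n\to\infty$; a differential inequality of the form $G\leq c(1+\Pi)+c|\Theta|^2$ with bounded terminal data does not by itself force boundedness of a nonnegative solution. So the step you flag as merely ``delicate'' (controlling $-\Pi_\varepsilon^2H_1^\varepsilon$, which in fact works fine and is $\varepsilon$-uniform) is not the obstacle; the obstacle is that the upper bound on $\Pi_\varepsilon$ is assumed where it must be proved.

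The paper obtains the lower bound by a comparison \emph{from below} rather than an a priori estimate on the reciprocal: it introduces the decoupled generator $\underline f$ (dropping the nonnegative off-diagonal terms $\sum_{j\neq i}q_{ij}P(j)$), solves each scalar singular ESRE by Theorem 4.2 of \cite{HZ} to get $\underline P(i)\geq c>0$, truncates $H_1$ by a smooth cutoff $g(P)$ vanishing near $P=0$ so that the Lipschitz approximations $H^k_c$ are well defined, and then sandwiches $c\leq\underline P(i)\leq P^k(i)\leq\overline P(i)\leq M$ via Lemma \ref{comparison} before passing to the limit in $k$ (the cutoff is inactive in the limit since $P(i)\geq c$). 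Your $\varepsilon$-regularisation of $R$ could be made to work, but you would have to replace the $1/P_\varepsilon$ estimate by exactly this kind of positive subsolution/comparison argument (uniform in $\varepsilon$), at which point the regularisation buys nothing over the paper's truncation; as written, the existence half of your proposal does not go through.
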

\begin{proof}
This case is relatively easy to deal with. We will present the main idea only. Details are left for the interested readers.
\par
Set, for $P\in\mathbb{R}^{\ell}_+$ and $\Lambda\in\mathbb{R}^{n\times \ell}$.
\begin{align*}
&\underline f(t, P, \Lambda, i)=(2A(i)+C(i)'C(i)+q_{ii})P(i)+2C(i)'\Lambda(i)+Q(i)+H_{1}(t, P(i), \Lambda(i), i).
\end{align*}
The corresponding $\ell$-dimensional BSDE with the generator $\underline f$ and terminal value $G$ is decoupled,
then by Theorem 4.2 of \cite{HZ}, there exists a solution $(\underline P(i), \ \underline\Lambda(i))\in L^\infty_{\mathcal{F}^W}(0, T; \mathbb {R})\times \BMO$, such that $\underline P(i)\geq c$ with some constant $c>0$, for all $i\in\cM$.

Let $g:\mathbb{R}^+\rightarrow [0, 1]$ be a smooth truncation function satisfying $g(x)=0$ for $x\in[0, \frac{1}{2}c]$, and $g(x)=1$ for $x\in[c, +\infty)$.
Repeat the argument of the proof of Theorem \ref{Riccatistandard}, the following BSDE:
\begin{align*}
\begin{cases}
dP^k(i)=-\Big[
\overline f(P^k, \Lambda^k, i)+H_{c}^k(P^k(i), \Lambda^k(i), i)\Big]dt+\Lambda^k(i)'dW, \\
P^k(i, T)=G(i), \ \mbox{ for all $i\in\cM$, }
\end{cases}
\end{align*}
has a solution, $(P^k(i), \ \Lambda^k(i))_{i=1}^{\ell}$, where $\overline f(t, P, \Lambda, i)$ is the same as in the proof of Theorem \ref{Riccatistandard} and
\[ H^k_c(t, P, \Lambda, i)=\sup_{\tilde P\in\mathbb R, \tilde\Lambda\in\mathbb R^n}\Big\{H_{1} (t, \tilde P, \tilde\Lambda, i)g(P)-k|P-\tilde P|-k|\Lambda-\tilde\Lambda|\Big\}. \]
Notice that
\[
\underline f(t, P, \Lambda, i)\leq \overline f(t,P, \Lambda, i)+H_{c}^k(t,P(i), \Lambda(i), i) \leq \overline f(t, P, \Lambda, i),
\]
then
\begin{align*}
c\leq\underline P(i)\leq P^k(i)\leq\overline P(i)\leq M.
\end{align*}
After taking limit, it gives $P(i)=\lim_{k}P^k(i)\geq c$ so that $g(P(i))=1$ and
\[H^k_c(t, P(i), \Lambda(i), i)=H^k(t, P(i), \Lambda(i), i).\]
By this, we proved the existence.
\par
To prove the uniqueness, using $P(i)\geq c>0$, we can repeat the proof of Theorem \ref{Riccatistandard} with $a=0$. In this case \eqref{ineqH3} is replaced by
\begin{align*}
&\qquad\;\bigg|\inf_{\substack{v\in\Gamma\\ |v|\leq c(1+|V(i)|+|\tilde V(i)|)}}\cH(v, \tilde U, U, V, i)-\inf_{\substack{v\in\Gamma\\ |v|\leq c(1+|V(i)|+|\tilde V(i)|)}} \cH(v, \tilde U, \tilde U, \tilde V, i)\bigg| \nonumber\\
&\leq \sup_{\substack{v\in\Gamma\\ |v|\leq c(1+|V(i)|+|\tilde V(i)|)}}\Big|2v' D(i)'\bar V(i)\Big| \nonumber\\
&\leq c (1+|V(i)|+|\tilde V(i)|) |\bar V(i)|,
\end{align*}
so that we do not need the John-Nirenberg inequality in the proof.
\end{proof}
\section{Solution to the LQ problem \eqref{LQ}}
In this section, we solve the LQ problem \eqref{LQ} explicitly in terms of the solutions of BSDEs \eqref{P1} and \eqref{P2}.

For $P\geq0$ with $PD(t, i)'D(t, i)+R(t, i)>0$, and $\Lambda\in\mathbb{R}^n$, define
\begin{align*}
\hat v_1(t, \omega, P, \Lambda, i)&=\argmin_{v\in\Gamma}\big[v'(PD(t, i)'D(t, i)+R(t, i))v\\
&\qquad\qquad\quad+2v'(PB(t, i)+PD(t, i)'C(t, i)+D(t, i)'\Lambda)\big], \\
\hat v_2(t, \omega, P, \Lambda, i)&=\argmin_{v\in\Gamma}\big[v'(PD(t, i)'D(t, i)+R(t, i))v\\
&\qquad\qquad\quad-2v'(PB(t, i)+PD(t, i)'C(t, i)+D(t, i)'\Lambda)\big].
\end{align*}
Similar to Remark \ref{remark:sym1}, we have
\begin{remark}\label{remark:sym2}
If $\Gamma$ is symmetric, then
\( \hat v_1(t,P, \Lambda, i)=-\hat v_2(t,P, \Lambda, i).\) In particular, if $\Gamma=\mathbb{R}^m$, then
\begin{align*}
\hat v_2(t, P, \Lambda, i)=(PD(t, i)'D(t, i)+R(t, i))^{-1}(PB(t, i)+PD(t, i)'C(t, i)+D(t, i)'\Lambda).
\end{align*}
\end{remark}

\begin{theorem}\label{LQcontrol}
Under conditions assumed  in Theorem \ref{Riccatistandard} or Theorem \ref{Riccatisingular},
the LQ problem \eqref{LQ} admits an optimal control, as a feedback function of the time $t$, the state $X$, and the market regime $i$,
\begin{align}
\label{opticon}
u^*(t, X, i)=\hat v_1(t, P_1(t, i), \Lambda_1(t, i), i)X^++\hat v_2(t, P_2(t, i), \Lambda_2(t, i), i)X^-.
\end{align}
Moreover, the corresponding optimal value is
\begin{align*}
\min_{u\in\mathcal{U}}J(x, i_0, u(\cdot))=P_1(0, i_0)(x^+)^2+P_2(0, i_0)(x^-)^2,
\end{align*}
where $(P_1(i), \ \Lambda_1(i))_{i=1}^{\ell} \ ((P_2(i), \ \Lambda_2(i))_{i=1}^{\ell})$ are the unique solutions of \eqref{P1} (\eqref{P2}).
\end{theorem}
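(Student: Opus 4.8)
The plan is to run a completion-of-squares argument based on a generalized It\^o formula, in the spirit of classical unconstrained stochastic LQ theory; the two new features are that the candidate value process is quadratic in $X^+$ and $X^-$ \emph{separately} (hence non-smooth in the state) and that the state is modulated by the regime $\alpha_t$. For an arbitrary admissible pair $(X(\cdot),u(\cdot))$ I would introduce
\[
Y(t)=P_1(t,\alpha_t)(X(t)^+)^2+P_2(t,\alpha_t)(X(t)^-)^2,\qquad t\in[0,T].
\]
Since $P_1(T,i)=P_2(T,i)=G(i)$ and $(x^+)^2+(x^-)^2=x^2$, one has $Y(T)=G(\alpha_T)X(T)^2$ and $Y(0)=P_1(0,i_0)(x^+)^2+P_2(0,i_0)(x^-)^2$, so $J(x,i_0,u(\cdot))=\E\big[\int_0^T(Q(\alpha_t)X(t)^2+u(t)'R(\alpha_t)u(t))\,dt\big]+\E[Y(T)]$; the target is the bound $J(x,i_0,u(\cdot))\ge Y(0)$, with equality exactly for the feedback \eqref{opticon}.

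Next I would compute $dY(t)$. Because $x\mapsto(x^+)^2$ is $C^1$ with absolutely continuous derivative $2x^+$ and a.e.\ second derivative $2\mathbf{1}_{\{x>0\}}$ (a bounded function, not a measure with atoms), the generalized (Meyer--It\^o) It\^o formula applies with \emph{no} local-time term; combined with the product rule and with It\^o's lemma for the Markov chain — which contributes the generator terms $\sum_{j}q_{ij}P_k(j)$ together with a purely discontinuous martingale orthogonal to $W$ — it yields an explicit semimartingale decomposition of $Y$. Three structural facts make the computation collapse: (i) $X^+X^-\equiv0$, so the $P_1$- and $P_2$-parts never interact; (ii) the drift and diffusion of the $(X^+)^2$-part are supported on $\{X>0\}$ and involve only $(P_1,\Lambda_1)$, and symmetrically for $(X^-)^2$ on $\{X<0\}$; (iii) the term $\sum_{j}q_{ij}P_k(j)$ in the drift of \eqref{P1} (resp.\ \eqref{P2}) is precisely cancelled by the compensator of the chain's jump martingale when one passes from $P_k(t,i)$ to $P_k(t,\alpha_t)$. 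Collecting the $dt$-terms of $dY(t)$ and adding the running cost $Q(\alpha_t)X(t)^2+u(t)'R(\alpha_t)u(t)$, one finds (suppressing $(t,\alpha_t)$) that this sum equals, on $\{X(t)>0\}$,
\[
X(t)^2\Big[v'(P_1D'D+R)v+2v'(P_1B+P_1D'C+D'\Lambda_1)-H_1(P_1,\Lambda_1)\Big],\qquad v:=u(t)/X(t)\in\Gamma,
\]
on $\{X(t)<0\}$ the same expression with $(P_2,\Lambda_2,H_2)$, the linear term carrying the opposite sign, and $v:=-u(t)/X(t)\in\Gamma$, and on $\{X(t)=0\}$ simply $u(t)'R(\alpha_t)u(t)\ge0$.

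Now I complete the square. By the definition of $H_1$ (resp.\ $H_2$) as the infimum over $v\in\Gamma$ of the bracketed quadratic — legitimate since $R(i)+P_k(i)D(i)'D(i)>0$ makes that quadratic coercive, so the infimum is attained — each of the three expressions is $\ge0$, vanishing on $\{X>0\}$ iff $v$ attains the infimum, i.e.\ $u=\hat v_1(P_1,\Lambda_1)X^+$, on $\{X<0\}$ iff $u=\hat v_2(P_2,\Lambda_2)X^-$, and on $\{X=0\}$ for $u=0$; that is, the drift-plus-cost is a.e.\ zero precisely for the feedback \eqref{opticon}. Localizing by stopping times $\tau_n\uparrow T$ to discard the local-martingale parts (the $W$-integrals and the compensated chain martingale), taking expectations, and passing to the limit — by Fatou's lemma for the inequality and by dominated convergence for $u=u^*$, using that each $P_k$ is bounded, each $\Lambda_k\in\BMO$, and the a priori bound $\E\sup_{t\le T}|X(t)|^2<\infty$ — gives $J(x,i_0,u(\cdot))\ge Y(0)$ for all admissible $u$ and $J(x,i_0,u^*(\cdot))=Y(0)$, which is the asserted value and shows $u^*$ optimal. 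It remains to check that \eqref{opticon} defines an admissible control: the map $x\mapsto\hat v_1x^++\hat v_2x^-$ is Lipschitz in $x$ with constant $\le|\hat v_1|+|\hat v_2|\le c(1+|\Lambda_1|+|\Lambda_2|)$ by the bounds recorded in the Remark and the boundedness of $P_1,P_2$, so the closed-loop equation \eqref{state} has a unique strong solution with finite second moment by standard estimates for SDEs with $\BMO$ coefficients, whence $u^*(\cdot)\in\mathcal U$.

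I expect the algebra above to be routine once the regime-switching generalized It\^o formula is correctly assembled; the genuine work lies in the measurable selection of the minimizers $\hat v_1,\hat v_2$ (recall $\Gamma$ need not be convex, so only a selection is available), the localization and integrability bookkeeping needed to pass expectations through, and — above all — the well-posedness and $L^2$-estimate for the closed-loop state equation whose coefficients are only $\BMO$ rather than bounded, which is precisely where the $\BMO$ property of $\Lambda_1,\Lambda_2$ from Theorems \ref{Riccatistandard}--\ref{Riccatisingular} is indispensable.
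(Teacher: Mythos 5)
Your verification half is essentially the paper's own argument: apply It\^o's formula (for the Markov chain and for the $C^1$ functions $(x^\pm)^2$, with no local-time term) to $P_1(t,\alpha_t)X^+(t)^2+P_2(t,\alpha_t)X^-(t)^2$, use the cone property to write $v=u/|X|\in\Gamma$, recognize the drift-plus-running-cost as the quadratic whose infimum over $\Gamma$ defines $H_1$ (resp.\ $H_2$), conclude nonnegativity with equality exactly at \eqref{opticon}, and localize; the cancellation of the $\sum_j q_{ij}P_k(j)$ terms by the chain's compensated jump martingale is exactly as in the paper. Up to this point the proposal is correct and takes the same route.

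The genuine gap is in the admissibility of $u^*$. You assert that, since $|\hat v_1|+|\hat v_2|\le c(1+|\Lambda_1|+|\Lambda_2|)$, the closed-loop equation ``has a unique strong solution with finite second moment by standard estimates for SDEs with $\BMO$ coefficients.'' No such standard estimate exists: the coefficients of the closed-loop SDE grow like $|\Lambda_k|$, which is only in $\BMO$, and $\BMO$ control of $\int\Lambda' dW$ does not by itself yield $\E\sup_t X(t)^2<\infty$, nor $\E\int_0^T|u^*|^2dt<\infty$; yet $u^*\in L^2_{\mathcal F}(0,T;\mathbb{R}^m)$ is exactly what membership in $\mathcal U$ requires, and your dominated-convergence step for the equality $J(x,i_0,u^*)=P_1(0,i_0)(x^+)^2+P_2(0,i_0)(x^-)^2$ also leans on this unproven bound. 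The paper closes this gap in two steps that are absent from your proposal: (i) strong solvability of the closed-loop SDE is obtained from Gal'chuk's theorem \cite{Ga} (the coefficients are pathwise square-integrable because $X$ is continuous and $\Lambda_k\in L^2$), which gives only $\int_0^T|u^*|^2dt<\infty$ a.s.; (ii) the expectation bound is then extracted from the stopped Riccati identity \eqref{stop} itself, separately in the two cases: in the standard case $R\ge\delta I_m$ gives $\delta\,\E\int_0^{T\wedge\tau_n}|u^*|^2ds\le P_1(0,i_0)(x^+)^2+P_2(0,i_0)(x^-)^2$ directly; in the singular case one first uses $P_1,P_2\ge c>0$ to get $\E[X(\iota\wedge T)^2]\le c$ for all stopping times, and then a second It\^o computation on $X^2$ together with $D'D\ge\delta I_m$ and the AM--GM inequality yields $\E\int_0^T|u^*|^2ds<\infty$. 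Without an argument of this kind (or a substitute), your proof does not establish $u^*\in\mathcal U$, so neither the attainment of the minimum within the admissible class nor the stated optimal value is justified; once $u^*\in L^2_{\mathcal F}$ is in hand, the remaining limit passages are routine since $A,B,C,D$ are bounded.
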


\begin{lemma}
Under the conditions of Theorem \ref{LQcontrol}, the feedback control $u^*$ defined by \eqref{opticon} is an admissible control for Problem \eqref{LQ}. \end{lemma}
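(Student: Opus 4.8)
The plan is to verify the two conditions defining admissibility: first, that $u^*(t,X(t),i)$ lies in the cone $\Gamma$ almost everywhere almost surely, and second, that the closed-loop state equation obtained by substituting \eqref{opticon} into \eqref{state} has a unique strong solution $X(\cdot)$ lying in $L^2_{\mathcal F}(0,T;\mathbb R)$, so that $u^*(\cdot,X(\cdot),\alpha_\cdot)\in L^2_{\mathcal F}(0,T;\mathbb R^m)$. The first point is almost immediate: by construction $\hat v_1$ and $\hat v_2$ take values in $\Gamma$, and since $\Gamma$ is a cone, $\hat v_1(t,P_1(t,i),\Lambda_1(t,i),i)X^+ \in\Gamma$ and $\hat v_2(t,P_2(t,i),\Lambda_2(t,i),i)X^-\in\Gamma$; moreover, for any fixed $t$ at most one of $X^+(t),X^-(t)$ is nonzero, so the sum is itself a nonnegative multiple of an element of $\Gamma$ and hence lies in $\Gamma$.

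The substantive step is the closed-loop well-posedness. First I would record growth estimates for the feedback gains. From the remark following the definition of $H_1,H_2$ (the quadratic-nature estimate), together with the fact that $PD'D+R\ge \delta I_m$ in the standard case (resp. $PD'D\ge\delta P I_m$ with $P\ge c>0$ in the singular case) and that $P_1,P_2$ are bounded, the minimizers satisfy $|\hat v_1(t,P_1(t,i),\Lambda_1(t,i),i)|\le c(1+|\Lambda_1(t,i)|)$ and likewise for $\hat v_2$, for some constant $c>0$ depending only on the bounds in Assumption~\ref{assump1} and on $\delta$. Writing $K_1(t)=\hat v_1(t,P_1(t,\alpha_t),\Lambda_1(t,\alpha_t),\alpha_t)$ and $K_2(t)=\hat v_2(t,P_2(t,\alpha_t),\Lambda_2(t,\alpha_t),\alpha_t)$, the closed-loop equation reads
\begin{align*}
dX(t)=\big[A(t,\alpha_t)+B(t,\alpha_t)'(K_1(t)\mathbf 1_{X(t)\ge0}+K_2(t)\mathbf 1_{X(t)<0})\big]X(t)\,dt\\
\qquad+\big[C(t,\alpha_t)'+(K_1(t)\mathbf 1_{X(t)\ge0}+K_2(t)\mathbf 1_{X(t)<0})'D(t,\alpha_t)'\big]X(t)\,dW(t),
\end{align*}
a linear SDE in $X$ with random, $\{\mathcal F_t\}$-adapted coefficients. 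The drift and diffusion coefficients are of the form $a(t)X$ and $b(t)X$ with $|a(t)|+|b(t)|\le c(1+|\Lambda_1(t,\alpha_t)|+|\Lambda_2(t,\alpha_t)|)$; here one uses that the map $x\mapsto K_1\mathbf 1_{x\ge0}+K_2\mathbf 1_{x<0}$, multiplied by $x$, is globally Lipschitz in $x$ with a (random, time-dependent) Lipschitz constant $|K_1|\vee|K_2|$, which is what makes the coefficients genuinely Lipschitz in the state variable rather than merely the products being well-defined.

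Existence and uniqueness of a strong solution then follow from a standard stochastic-exponential / Gronwall argument once one knows $\int_0^T(|a(t)|^2+|b(t)|^2)\,dt<\infty$ a.s. and, for the $L^2$ bound, that $\int_0^\cdot \Lambda_j(t,\alpha_t)'dW(t)$ generates enough integrability. This is where the BMO property of $\Lambda_1,\Lambda_2$ established in Step 1 of the proof of Theorem~\ref{Riccatistandard} (and its analogue in Theorem~\ref{Riccatisingular}) does the work: since $\Lambda_1(\cdot,i),\Lambda_2(\cdot,i)\in\BMO$ for every $i$, the integrand $b$ defines a BMO martingale, its Dol\'eans-Dade exponential is a uniformly integrable martingale, and by a change of measure together with the John--Nirenberg inequality one obtains $\E\big[\sup_{t\in[0,T]}|X(t)|^p\big]<\infty$ for $p$ in a neighbourhood of $2$ — in particular $X(\cdot)\in L^2_{\mathcal F}(\Omega;C(0,T;\mathbb R))$. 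Combined with the gain estimate this yields $\E\int_0^T|u^*(t,X(t),\alpha_t)|^2dt\le c\,\E\int_0^T(1+|\Lambda_1(t,\alpha_t)|^2+|\Lambda_2(t,\alpha_t)|^2)|X(t)|^2dt<\infty$, again using boundedness of $X$ in the appropriate $L^p$ and the BMO property to handle the product of $|X|^2$ with the (merely square-integrable but BMO) $|\Lambda_j|^2$. The main obstacle is precisely this last integrability bookkeeping: one cannot simply cite classical Lipschitz SDE theory because the coefficients are unbounded in $\omega$, so the argument must route through the BMO structure and a Girsanov-type change of measure to close the $L^2$ estimate on both $X$ and $u^*$.
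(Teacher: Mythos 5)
Your first step (cone membership) and your treatment of closed-loop existence/uniqueness are fine in spirit: the gain estimates $|\hat v_k|\le c(|P_k|+|\Lambda_k|)$ are exactly what the paper derives, and the paper handles well-posedness of the closed-loop SDE by citing Gal'chuk's theorem for SDEs with random, merely time-integrable Lipschitz coefficients, which matches your Gronwall-type reasoning. The genuine gap is in your key integrability step. You claim that, because $\Lambda_1,\Lambda_2$ are BMO, a change of measure plus the John--Nirenberg inequality yields $\E\big[\sup_{t}|X(t)|^{p}\big]<\infty$ for $p$ near $2$, and then $\E\int_0^T(1+|\Lambda|^2)|X|^2\,dt<\infty$. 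Neither conclusion follows from the BMO property alone. The closed-loop state is (up to a drift factor) a Dol\'eans-Dade exponential of a BMO martingale whose BMO norm is dictated by the problem data; BMO only gives a reverse H\"older inequality up to some exponent $p^{*}>1$ depending on that norm, which need not reach $2$, and John--Nirenberg only gives $\E\exp\big(\varepsilon\int_0^T|\Lambda|^2ds\big)<\infty$ for $\varepsilon$ \emph{below} a threshold tied to the BMO norm -- whereas your argument needs exponential moments with a fixed constant of order one (coming from $|b(t)|^2\le c(1+|\Lambda_1|^2+|\Lambda_2|^2)$), and there is no smallness to exploit. So the $L^2$ (in expectation) bound on $X$ and on $u^*$ is not established by this route; at best you get $\int_0^T|u^*|^2dt<\infty$ a.s., which is weaker than admissibility.

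The paper closes this gap with a completely different, structural argument: it applies It\^o's formula to $P_1(t,\alpha_t)X^+(t)^2+P_2(t,\alpha_t)X^-(t)^2$ along the closed-loop dynamics. Because $u^*$ attains the infima defining $H_1,H_2$, the drift contributions cancel exactly against the generators of the ESREs, producing (after localization) the identity
\begin{align*}
\E\Big[P_1X^+(\iota\wedge\tau_n)^2+P_2X^-(\iota\wedge\tau_n)^2+\int_0^{\iota\wedge\tau_n}\big(u^{*\prime}Ru^*+QX^2\big)ds\Big]
=P_1(0,i_0)(x^+)^2+P_2(0,i_0)(x^-)^2 .
\end{align*}
In the standard case $R\ge\delta I_m$ this immediately bounds $\delta\,\E\int_0^{T\wedge\tau_n}|u^*|^2ds$ by a constant, and monotone convergence gives $u^*\in L^2_{\mathcal F}(0,T;\mathbb{R}^m)$. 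In the singular case, $P_1,P_2\ge c>0$ first yields $\E[X(\iota\wedge T)^2]\le c$ uniformly over stopping times, hence $\E\int_0^T X^2ds<\infty$; a second application of It\^o's formula to $X^2$, together with $D'D\ge\delta I_m$ and the AM--GM inequality, then bounds $\E\int_0^T|u^*|^2ds$. This Lyapunov/completion-of-squares device -- letting the Riccati equation itself furnish the a priori estimate -- is the missing idea; without it (or some substitute smallness assumption on the BMO norms), your moment estimates on the stochastic exponential cannot be closed.
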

\begin{proof}
By definition, we can see that $\hat v_1(t, P_1, \Lambda_1, i) $, $\hat v_2(t, P_2, \Lambda_2, i)\in\Gamma$, so is $u^*(t, X, i)$.
It is only left to show that $u^*(t, X(t), \alpha_t)\in L^2_\mathcal{F}(0, T;\mathbb{R}^m)$.

Substituting \eqref{opticon} into the state process \eqref{state}, we have
\begin{align}
\label{SDEstate}
\begin{cases}
dX(t)=\big[A(t, \alpha_t)X(t)+B(t, \alpha_t)'(\hat v_1(t, P_1, \Lambda_1, \alpha_t)X^+(t)+\hat v_2(t, P_2, \Lambda_2, \alpha_t)X^-(t))\big]dt\\
\qquad \qquad \;+\big[C(t, \alpha_t)'X(t)+(\hat v_1(t, P_1, \Lambda_1, \alpha_t)X^+(t)\\
\qquad\qquad\qquad \;+\hat v_2(t, P_2, \Lambda_2, \alpha_t)X^-(t))'D(t, \alpha_t)'\big]dW(t), \\
X(0)=x, \ \alpha_0=i_0,
\end{cases}
\end{align}
Similarly to the proofs of Theorems \ref{Riccatistandard} and \ref{Riccatisingular}, we know there are constants $c_1>0, \ c_2>0$, for a.e. $t\in[0, T]$ and all $i\in\cM$, such that
\begin{align*}
&\ \ \ \ v'(P_1(i)D(i)'D(i)+R(i))v
+2v'(P_1(i)B(i)+P_1(i)D(i)'C(i)+D(i)'\Lambda_1(i))\\
&\geq c_1|v|^2-c_2(|P_1(i)|+|\Lambda_1(i)|)|v|.
\end{align*}
Notice that $c_1|v|^2-c_2(|P_1(i)|+|\Lambda_1(i)|)|v|>0 \geq H_1(t, P_1, \Lambda_1) $, if $|v|>\frac{c_2}{c_1}(|P_1(i)|+|\Lambda_1(i)|):=c(|P_1(i)|+|\Lambda_1(i)|)$, thus
\[
|\hat v_1(t, P_1(i), \Lambda_1(i), i)|\leq c(|P_1(i)|+|\Lambda_1(i)|).
\]
Similarly, we have
\[
|\hat v_2(t, P_2, \Lambda_2, i)|\leq c(|P_2(i)|+|\Lambda_2(i)|).
\]
From Theorems \ref{Riccatistandard} and \ref{Riccatisingular}, we know that
$(P_1(i), \ \Lambda_1(i))$, $(P_2(i), \ \Lambda_2(i))\in L^2_{\mathcal{F}^W}(0, T; \mathbb {R})\times \BMO$ for all $i\in\cM$.
By the basic theorem on pp. 756-757 of Gal'chuk \cite{Ga}, the SDE \eqref{SDEstate} has a unique strong solution.
Furthermore,
\begin{align*}
|u^*(t, X(t), \alpha_t)| \leq c (|P_1(t, \alpha_t)|+|\Lambda_1(t, \alpha_t)|+|P_2(t, \alpha_t)|+|\Lambda_2(t, \alpha_t)|)|X(t)|.
\end{align*}
As $X(t)$ is continuous, it is bounded on $[0, T]$. Hence we guarantee that
\begin{align*}
\int_0^T |u^*(t, X(t), \alpha_t)|^2dt < \infty.
\end{align*}

Applying It\^{o}'s lemma to $P_1(t, \alpha_t)X^+(t)^2$ and $P_2(t, \alpha_t)X^-(t)^2$, (Here we use It\^{o}'s lemma for the Markovian chain and we refer to \cite{HLT}), we have
\begin{align*}
& \quad\; P_1(t, \alpha_t)X^+(t)^2+P_2(t, \alpha_t)X^-(t)^2\\
&\qquad+\int_0^t\Big[u^*(s, X(s), \alpha_s)'R(s, \alpha_s)u^*(s, X(s), \alpha_s)+Q(s, \alpha_s)X(s)^2\Big]ds\\
&=P_1(t, \alpha_t)X^+(t)^2+P_2(t, \alpha_t)X^-(t)^2\\
&\qquad
+\int_0^t\Big[X^+(s)^2\hat v_1(s, P_1, \Lambda_1, \alpha_s)'R(s, \alpha_s)\hat v_1(s, P_1, \Lambda_1, \alpha_s)\\
&\qquad\qquad\qquad+X^-(s)^2\hat v_2(s, P_2, \Lambda_2, \alpha_s)'R(s, \alpha_s)\hat v_2(s, P_2, \Lambda_2, \alpha_s)+Q(s, \alpha_s)X(s)^2\Big]ds\\
&=P_1(0, i_0)(x^+)^2+P_2(0, i_0)(x^-)^2\\
&\qquad+\int_0^t\Big\{X^+(s)^2( 2P_1(s, \alpha_s)(C(s, \alpha_s)+D(s, \alpha_s)\hat v_1(s, P_1, \Lambda_1, \alpha_s))'
+\Lambda_1(s, \alpha_s)')\\
&\qquad\qquad\qquad+X^-(s)^2 (2P_2(s, \alpha_s)(C(s, \alpha_s)-D(s, \alpha_s)\hat v_2(s, P_2, \Lambda_2, \alpha_s))'\\
&\qquad\qquad\qquad\qquad+\Lambda_2(s, \alpha_s)')\Big\}dW(s)\\
&\qquad+\int_0^t\Big\{X^+(s)^2\sum_{j, j'\in\mathcal{M}}(P_1(s, j)-P_1(s, j'))I_{\{\alpha_{s-}=j'\}}\\
&\qquad\qquad\qquad+X^-(s)^2\sum_{j, j'\in\mathcal{M}}(P_2(s, j)-P_2(s, j'))I_{\{\alpha_{s-}=j'\}}\Big\}d\tilde N_s^{j'j},
\end{align*}
where $(N^{j'j})_{j'j\in\mathcal{M}}$ are independent Poisson processes each with intensity $q_{j'j}$, and $\tilde N_t^{j'j}=N_t^{j'j}-q_{j'j}, \ t\geq0$ are the corresponding compensated Poisson martingales under the filtration $\mathcal{F}$.

Noting that $X(t)$ is bounded on $[0, T]$, the stochastic integrals in the last equation are local martingales. Thus there exists an increasing sequence of stopping times $\tau_n$ such that $\tau_n\uparrow+\infty$ as $n\rightarrow+\infty$ such that
\begin{align}
\label{stop}
& \quad\;\mathbb{E}\Bigg[P_1(\iota\wedge\tau_n, \alpha_{\iota\wedge\tau_n})X^+(\iota\wedge\tau_n)^2
+P_2(\iota\wedge\tau_n, \alpha_{\iota\wedge\tau_n})X^-(\iota\wedge\tau_n)^2 \nonumber\\
&\qquad\quad+\int_0^{\iota\wedge\tau_n}\Big[u^*(s, X(s), \alpha_s)'R(s, \alpha_s)u^*(s, X(s), \alpha_s)+Q(s, \alpha_s)X(s)^2\Big]ds\Bigg] \nonumber\\
&=P_1(0, i_0)(x^+)^2+P_2(0, i_0)(x^-)^2,
\end{align}
for any stopping time $\iota\leq T$.

For the standard case, we have
\begin{align*}
\delta\mathbb{E}\int_0^{T\wedge\tau_n}|u^*(s, X(s), \alpha_s)|^2ds\leq P_1(0, i_0)(x^+)^2+P_2(0, i_0)(x^-)^2,
\end{align*}
where $\delta>0$ is given in Theorem \ref{Riccatistandard}. Letting $n\to \infty$, it follows from the monotone convergent theorem that
$u^*(t, X(t), \alpha_t)\in L^2_\mathcal{F}(0, T;\mathbb{R}^m)$.

For the singular case, there exists a constant $c>0$, such that $P_1(i)$, $P_2(i)\geq c$ for all $i\in\cM$ by Theorem \ref{Riccatisingular}.
Then from \eqref{stop}, we have
\begin{align*}
c\mathbb{E}\big[X(\iota\wedge\tau_n)^2\big]&\leq\mathbb{E} \Big[P_1(\iota\wedge\tau_n, \alpha_{\iota\wedge\tau_n})X^+(\iota\wedge\tau_n)^2
+P_2(\iota\wedge\tau_n, \alpha_{\iota\wedge\tau_n})X^-(\iota\wedge\tau_n)^2\Big]\\
&\leq P_1(0, i_0)(x^+)^2+P_2(0, i_0)(x^-)^2.
\end{align*}
Letting $n\to \infty$, it follows from Fatou's lemma that
\begin{align}
\label{bound}
\E \Big[X(\iota\wedge T)^2\Big]\leq c,
\end{align}
for any stopping time $\iota\leq T$.
This further implies
\begin{align}
\label{bound2}
\E \int_0^{\iota\wedge T}X(s)^2ds\leq \int_0^{T}\E\left[X(s)^2\right]ds\leq cT.
\end{align}
By It\^{o}'s Lemma, we have
\begin{align*}
X(t)^2&=x^2+\int_0^t \Big[(2A(s, \alpha_s)+C(s, \alpha_s)'C(s, \alpha_s))X(s)^2 \\
&\qquad\qquad\qquad+2X(s)\big((B(s, \alpha_s)+D(s, \alpha_s)'C(s, \alpha_s))'u^*(s, X(s), \alpha_s)\big)\\
&\qquad\qquad\qquad+u^*(s, X(s), \alpha_s)'D(s, \alpha_s)'D(s, \alpha_s)u^*(s, X(s), \alpha_s)\Big]ds \\
&\qquad+\int_0^t2X(s)(C(s, \alpha_s)'X(s)+u^*(s, X(s), \alpha_s)'D(s, \alpha_s)')dW(s).
\end{align*}
Because $X(t)$ is continuous, it follows that
\begin{align*}
2X(s)(C(s, \alpha_s)'X(s)+u^*(s, X(s), \alpha_s)'D(s, \alpha_s)')\in L^{2, \;\mathrm{loc}}_{\mathcal{F}}(0, T;\mathbb R^n).
\end{align*}
Therefore, there exists an increasing localizing sequence $\tau_n\uparrow\infty$ as $n\rightarrow\infty$, such that
\begin{align*}
&\ \ \ \ x^2+\E \int_0^{T\wedge\tau_n}u^*(s, X(s), \alpha_s)'D(s, \alpha_s)'D(s, \alpha_s)u^*(s, X(s), \alpha_s)ds\\
&=\E\Big[ X(T\wedge\tau_n)^2\Big]-\E \int_0^{T\wedge\tau_n} \Big[(2A(s, \alpha_s)+C(s, \alpha_s)'C(s, \alpha_s))X(s)^2\\
&\qquad\qquad\qquad\qquad\qquad+2X(s)\big((B(s, \alpha_s)+D(s, \alpha_s)'C(s, \alpha_s))'u^*(s, X(s), \alpha_s)\big)\Big]ds.
\end{align*}
Let $\delta>0$ be given in Theorem \ref{Riccatisingular}. By \eqref{bound} and \eqref{bound2}, the above by the AM-GM inequality leads to
\begin{align*}
&\quad\;\delta \E \int_0^{T\wedge\tau_n}|u^*(s, X(s), \alpha_s)|^2ds \leq c+c\E \int_0^{T\wedge\tau_n}\Big[X(s)^2+2|X(s)| |u^*(s, X(s), \alpha_s)|\Big]ds\\
&\leq c+c(1+\frac{2c}{\delta})\E \int_0^{T\wedge\tau_n}X(s)^2ds+\frac{\delta}{2}\E \int_0^{T\wedge\tau_n} |u^*(s, X(s), \alpha_s)|^{2}ds\\
&\leq c+\frac{\delta}{2}\E \int_0^{T\wedge\tau_n} |u^*(s, X(s), \alpha_s)|^{2}ds.
\end{align*} After rearrangement, it follows from the monotone convergent theorem that
\begin{align*}
\E \int_0^{T}|u^*(s, X(s), \alpha_s)|^2ds \leq c.
\end{align*}
This completes the proof.
\end{proof}

We are now ready to prove Theorem \ref{LQcontrol}.

\begin{proof}
For any $u(\cdot)\in\mathcal{U}$,
applying It\^{o}'s formula to $P_1(t, \alpha_t)X^+(t)^2$ and $P_2(t, \alpha_t)X^-(t)^2$, we have
\begin{align*}
& \ \ \ \ P_1(t, \alpha_t)X^+(t)^2+P_2(t, \alpha_t)X^-(t)^2\\
&=P_1(0, i_0)(x^+)^2+P_2(0, i_0)(x^-)^2\\
&\quad+\int_0^t \bigg\{I_{\{X(s)\geq0\}}P_1(s, \alpha_s)u(s)'D(s, \alpha_s)'D(s, \alpha_s)u(s)\\
&\qquad\qquad\;+2X^+(s)P_1(s, \alpha_s)C(s, \alpha_s)'D(s, \alpha_s)u(s)\\
&\qquad\qquad\;+2X^+(s)P_1(s, \alpha_s)B(s, \alpha_s)'u(s)+2X(s)^+\Lambda_1(s, \alpha_s)'D(s, \alpha_s)u(s)\\
&\qquad\qquad\;-X^+(s)^2[Q(s, \alpha_s)+H_1(s, P_1(s, \alpha_s), \Lambda_1(s, \alpha_s), \alpha_s)]\\
&\qquad\qquad\;+I_{\{X(s)<0\}}P_2(s, \alpha_s)u(s)'D(s, \alpha_s)'D(s, \alpha_s)u(s)\\
&\qquad\qquad\;-2X^-(s)P_2(s, \alpha_s)C(s, \alpha_s)'D(s, \alpha_s)u(s)\\
&\qquad\qquad\;-2X^-(s)P_2(s, \alpha_s)B(s, \alpha_s)'u(s)-2X^-(s)\Lambda_2(s, \alpha_s)'D(s, \alpha_s)u(s)\\
&\qquad\qquad\;-X^-(s)^2[Q(s, \alpha_s)+H_2(s, P_2(s, \alpha_s), \Lambda_2(s, \alpha_s), \alpha_s)]\bigg\}ds\\
&\quad+\int_0^t\bigg\{ 2P_1(s, \alpha_s)(C(s, \alpha_s)'X^+(s)^2+u_s'D(s, \alpha_s)'X^+(s))
+X^+(s)^2\Lambda_1(s, \alpha_s)'\\
&\qquad\qquad\;+2P_2(s, \alpha_s)(C(s, \alpha_s)'X^-(s)^2-u_s'D(s, \alpha_s)'X^-(s))
+X^-(s)^2\Lambda_2(s, \alpha_s)'\bigg\}dW(s)\\
&\quad+\int_0^t\bigg\{X^+(s)^2\sum_{j, j'\in\mathcal{M}}(P_1(s, j)-P_1(s, j'))I_{\{\alpha_{s-}=j'\}}\\
&\qquad\qquad\;+X^-(s)^2\sum_{j, j'\in\mathcal{M}}(P_2(s, j)-P_2(s, j'))I_{\{\alpha_{s-}=j'\}}\bigg\}d\tilde N_s^{j'j},
\end{align*}
where $(N^{j'j})_{j'j\in\mathcal{M}}$ are independent Poisson processes each with intensity $q_{j'j}$, and $\tilde N_t^{j'j}=N_t^{j'j}-q_{j'j}, \ t\geq0$ are the corresponding compensated Poisson martingales under the filtration $\mathcal{F}$.

Note that $X(t)$ is continuous, the last two terms in the above equation are local martingales. Therefore, there exists an increasing localizing sequence of stopping times $\tau_n\uparrow+\infty$ as $n\rightarrow+\infty$ such that
\begin{align*}
& \ \ \ \ \mathbb{E}\left[P_1(T\wedge\tau_n, \alpha_{T\wedge\tau_n})X^+(T\wedge\tau_n)^2
+P_2(T\wedge\tau_n, \alpha_{T\wedge\tau_n})X^-(T\wedge\tau_n)^2\right]\\
&=P_1(0, i_0)(x^+)^2+P_2(0, i_0)(x^-)^2\\
&\quad+\mathbb{E}\int_0^{T\wedge\tau_n} \bigg\{I_{\{X(s)\geq0\}}P_1(s, \alpha_s)u(s)'D(s, \alpha_s)'D(s, \alpha_s)u(s)\\
&\qquad\qquad\;+2X^+(s)P_1(s, \alpha_s)C(s, \alpha_s)'D(s, \alpha_s)u(s)\\
&\qquad\qquad\;+2X^+(s)P_1(s, \alpha_s)B(s, \alpha_s)'u_s+2X^+(s)\Lambda_1(s, \alpha_s)'D(s, \alpha_s)u(s)\\
&\qquad\qquad\;+I_{\{X(s)<0\}}P_2(s, \alpha_s)u(s)'D(s, \alpha_s)'D(s, \alpha_s)u(s)\\
&\qquad\qquad\;-2X^-(s)P_2(s, \alpha_s)C(s, \alpha_s)'D(s, \alpha_s)u(s)\\
&\qquad\qquad\;-2X^-(s)P_2(s, \alpha_s)B(s, \alpha_s)'u(s)-2X^-(s)\Lambda_2(s, \alpha_s)'D(s, \alpha_s)u(s)\\
&\qquad\qquad\;-X^+(s)^2H_1(s, P_1(s, \alpha_s), \Lambda_1(s, \alpha_s), \alpha_s)-X^-(s)^2H_2(s, P_2(s, \alpha_s), \Lambda_2(s, \alpha_s), \alpha_s)\\
&\qquad\qquad\;-X(s)^2Q(s, \alpha_s)\bigg\}ds.
\end{align*}
where we combined the two terms involving $Q$.
After rearrangement and combining similar terms,
\begin{align}
\label{square}
& \ \ \ \ \mathbb{E}\bigg[P_1(T\wedge\tau_n, \alpha_{T\wedge\tau_n})X^+(T\wedge\tau_n)^2
+P_2(T\wedge\tau_n, \alpha_{T\wedge\tau_n})X^-(T\wedge\tau_n)^2\nonumber\\
&\qquad\qquad\qquad\qquad
+\int_0^{T\wedge\tau_n} \Big(Q(s, \alpha_s)X(s)^2+u_s'R(s, \alpha_s)u(s)\Big)ds\bigg]\nonumber\\
&=P_1(0, i_0)(x^+)^2+P_2(0, i_0)(x^-)^2+\mathbb{E}\int_0^{T\wedge\tau_n} \phi(s, X(s), u(s), \alpha_s) ds,
\end{align}
where
\begin{align*}
&\phi(s, X(s), u(s), \alpha_s)\\
&=u(s)'\Big(R(s, \alpha_s)+I_{\{X(s)\geq0\}}P_1(s, \alpha_s)D(s, \alpha_s)'D(s, \alpha_s)\\
&\qquad\qquad\;+I_{\{X(s)<0\}}P_2(s, \alpha_s)D(s, \alpha_s)'D(s, \alpha_s)\Big)u(s)\\
&\quad\;+2X^+(s)(P_1(s, \alpha_s)C(s, \alpha_s)'D(s, \alpha_s)+P_1(s, \alpha_s)B(s, \alpha_s)'
+\Lambda_1(s, \alpha_s)'D(s, \alpha_s))u(s)\\
&\quad\;-2X^-(s)(P_2(s, \alpha_s)C(s, \alpha_s)'D(s, \alpha_s)+P_2(s, \alpha_s)B(s, \alpha_s)'+\Lambda_2(s, \alpha_s)'D(s, \alpha_s))u(s)\\
&\quad\;-X^+(s)^2H_1(s, P_1(s, \alpha_s), \Lambda_1(s, \alpha_s), \alpha_s)-X^-(s)^2H_2(s, P_2(s, \alpha_s), \Lambda_2(s, \alpha_s), \alpha_s).
\end{align*}
Define an $\mathcal{F}_t$-adapted process
\begin{align*}
v(t)=\begin{cases}
\frac{u(t)}{|X(t)|}, \ &\mbox{ if } \ |X(t)|>0;\\
\ \ 0, \ &\mbox{ if } \ X(t)=0.
\end{cases}
\end{align*}
Notice that $\Gamma$ is a cone, so the process $v$ is valued in $\Gamma$. If $X(s)\geq0$, then
\begin{align*}
\phi(s, X(s), u(s), \alpha_s)&=X(s)^2\Big\{v(s)'\Big(R(s, \alpha_s)+P_1(s, \alpha_s)D(s, \alpha_s)'D(s, \alpha_s)\Big)v(s)\\
&\qquad\qquad\;+2(P_1(s, \alpha_s)C(s, \alpha_s)'D(s, \alpha_s)+P_1(s, \alpha_s)B(s, \alpha_s)'\\
&\qquad\qquad\;+\Lambda_1(s, \alpha_s)'D(s, \alpha_s))v(s)-H_1(s, P_1(s, \alpha_s), \Lambda_1(s, \alpha_s), \alpha_s)\Big\}.
\end{align*}
By the definition of $H_1(t, P, \Lambda, i)$, this is non-negative. Similarly, we have $\phi(s, X(s), u(s), \alpha_s)\geq0$ when $X(s)<0$. Hence, it follows from \eqref{square} that
\begin{align*}
& \quad\; \mathbb{E}\bigg[P_1(T\wedge\tau_n, \alpha_{T\wedge\tau_n})X^+(T\wedge\tau_n)^2
+P_2(T\wedge\tau_n, \alpha_{T\wedge\tau_n})X^-(T\wedge\tau_n)^2\nonumber\\
&\qquad\quad
+\int_0^{T\wedge\tau_n} \Big(Q(s, \alpha_s)X(s)^2+u_s'R(s, \alpha_s)u(s)\Big)ds\bigg]\nonumber\\
&\geq P_1(0, i_0)(x^+)^2+P_2(0, i_0)(x^-)^2.
\end{align*}
It is not hard to verify $\mathbb E\bigg[\sup\limits_{t\in[0, T]}X(t)^2\bigg]<\infty$ by standard theory of SDE. Let $n\rightarrow\infty$, by the dominated convergence and monotone convergence theorems, we have
\begin{align*}
&\quad\mathbb{E}\left\{\int_0^T\Big(Q(t, \alpha_t)X(t)^2+u(t)'R(t, \alpha_t)u(t)\Big)dt
+G(\alpha_T)X(T)^2\right\}\\
& \geq P_1(0, i_0)(x^+)^2+P_2(0, i_0)(x^-)^2,
\end{align*}
where the equality holds at \eqref{opticon}.
\end{proof}

\section{Application to mean-variance portfolio selection problems}

Consider a financial market consisting of a risk-free asset (the money market
instrument or bond) whose price is $S_{0}$ and $m$ risky securities (the
stocks) whose prices are $S_{1}, \ldots, S_{m}$. And assume $m\leq n$, i.e. the number of risky securities is no more than the dimension of the Brownian motion.
The asset prices $S_k$, $k=0, 1, \ldots, m, $ are driven by SDEs:
\begin{align*}
\begin{cases}
dS_0(t)=r(t, \alpha_t)S_{0}(t)dt, \\
S_0(0)=s_0,
\end{cases}
\end{align*}
and
\begin{align*}
\begin{cases}
dS_k(t)=S_k(t)\Big(\mu_k(t, \alpha_t)dt+\sum\limits_{j=1}^n\sigma_{kj}(t, \alpha_t)dW_j(t)\Big), \\
S_k(0)=s_k,
\end{cases}
\end{align*}
where, for every $k=1, \ldots, m$ and $i\in\cM$, $r(t, i)$ is the interest rate process, $\mu_k(t, i)$ and $\sigma_k(t, i):=(\sigma_{k1}(t, i), \ldots, \sigma_{kn}(t, i))$ are the appreciation rate process and volatility rate process of the $k$th risky security corresponding to a market regime $\alpha_t=i$.

Define the appreciate vector
\begin{align*}
\mu(t, i)=(\mu_1(t, i), \ldots, \mu_m(t, i))',
\end{align*}
and volatility matrix
\begin{align*}
\sigma(t, i)=
\left(
\begin{array}{c}
\sigma_1(t, i)\\
\vdots\\
\sigma_m(t, i)\\
\end{array}
\right)
\equiv (\sigma_{kj}(t, i))_{m\times n}, \ \text{for}\ \text{each} \ i\in\cM.
\end{align*}
In the rest part of this paper, we shall assume
$r(\cdot, \cdot, i)$, $\mu_k(\cdot, \cdot, i)$, $\sigma_{kj}(\cdot, \cdot, i)\in L^\infty_{\mathcal F^W}(0, T;\mathbb R)$, for all $k=1, \ldots, m$, $j=1, \ldots, n$, and $i\in\cM$. Also there exists a constant $\delta>0$ such that
$\sigma(t, i)\sigma(t, i)'\geq \delta I_m$ for a.e. $t\in[0, T]$ and all $i\in\cM$.

A small investor, whose actions cannot affect the asset prices, will decide at every time
$t\in[0, T]$ what amount $\pi_j(t)$ of his wealth to invest in the $j$th risky asset, $j=1, \ldots, m$. The vector process $\pi(\cdot):=(\pi_1(\cdot), \ldots, \pi_m(\cdot))'$ is called a portfolio of the investor. Then the investor's self-financing wealth process $X(\cdot)$ corresponding to a portfolio $\pi(\cdot)$ is the unique strong solution of the SDE:
\begin{align}
\label{wealth}
\begin{cases}
dX(t)=[r(t, \alpha_t)X(t)+\pi(t)'b(t, \alpha_t)]dt+\pi(t)'\sigma(t, \alpha_t)dW(t), \\
X(0)=x, \ \alpha_0=i_0,
\end{cases}
\end{align}
where $b(t, \alpha_t):=\mu(t, \alpha_t)-r(t, \alpha_t)\mathbf{1}_{m}$ and $\mathbf{1}_{m}$ is the $m$-dimensional vector with all entries being one.

\begin{remark}
\label{incompleteness}
This is an incomplete financial market model. The incompleteness comes from two rescources.
On one hand, the number of risky securities may be less than the dimension of the Brownian motion so that one can not perfectly hedge the risk; On the other hand, the Markov chain $\alpha_t$, which is independent of the Brownian motion, brings another market uncertainty.
\end{remark}

The admissible portfolio set is defined as
\begin{align*}
\mathcal U=\Big\{\pi\in L^2_{\mathcal F}(0, T;\mathbb R^m)\mid \pi(\cdot)\in\Gamma\mbox{ a.s. a.e.} \Big\}.
\end{align*}
For any $\pi\in \mathcal{U}$, the SDE \eqref{wealth} has a unique strong solution.
In the following two subsections, we will consider two different portfolio constraint sets: $\Gamma=\R^{m}$ and $\Gamma=\R_{+}^{m}$, respectively. Economically speaking, the former means there is no trading constraint; while the later means no-shorting is allowed in the market.

\begin{remark}
Our argument can be applied to consider general closed, not necessarily convex, cone portfolio constraint.
\end{remark}

\par
For a given expectation level $z\in\mathbb{R}$, the investor's problem is to
\begin{align}
\mathrm{Minimize}&\quad \mathrm{Var}(X(T))=\E\big[(X(T)-z)^{2}\big]%
, \nonumber\\
\mathrm{ s.t.} &\quad
\begin{cases}
\E (X(T))=z, \\
\pi\in \mathcal{U}.
\end{cases}
\label{optm}%
\end{align}

To deal with the constraint $\E(X(T))=z$, we introduce a Lagrange
multiplier $-2\lambda\in\mathbb{R}$ and obtain the following \emph{relaxed}
optimization problem:
\begin{align}\label{optmun}
\mathrm{Minimize} &\quad {\mathbb{E}}(X(T)-z)^{2}-2\lambda({\mathbb{E}}%
X(T)-z)={\mathbb{E}}(X(T)-(\lambda+z))^{2}-\lambda^{2}=:\hat{J}(\pi, \lambda), \\
\mathrm{s.t.} &\quad \pi\in \mathcal{U}.\nonumber%
\end{align}
Because Problem \eqref{optm} is a convex optimization problem, Problems \eqref{optm} and \eqref{optmun} are linked by the Lagrange duality theorem (see Luenberger \cite{Lu})
\begin{align}\label{duality}
\min_{\pi\in\mathcal{U}, \E(X(T))=z}\mathrm{Var}(X(T)%
)=\max_{\lambda\in\mathbb{R}}\min_{\pi\in\mathcal{U}}\hat{J}(\pi, \lambda).
\end{align}
This allows us to solve Problem \eqref{optm} by a two-step procedure: First solve the relaxed problem \eqref{optmun}, then find a $\lambda^{*}$ to maximize $\min_{\pi\in\mathcal{U}}\hat{J}(\pi, \lambda)$.
\par

\subsection{Feasibility of Problem \eqref{optm}}
We shall say that the mean-variance problem \eqref{optm} is feasible for a given $z$ if there is a portfolio $\pi\in \mathcal{U}$ which satisfies the target constraint $\E(X(T))=z$.

Economically speaking, it is irrational to consider those portfolios with expected returns $z<x\E e^{\int_0^Tr(t, \alpha_t)dt}$. Although general cases can be considered, for notation simplicity, we will focus on $z\geq x\E e^{\int_0^Tr(t, \alpha_t)dt}$.

Define
\[\dualgamma:=\{y\in\R^{m}\mid x'y\leq 0 \mbox{ for all $x\in\Gamma$}\}.\]
The following result gives an equivalent condition for the feasibility of \eqref{optm}.
\begin{theorem}[feasibility]\label{theoremfeasible}
Let  $(\psi(i), \xi(i))\in L^2_{\mathcal{F}^W}(0, T;\mathbb{R})\times L^2_{\mathcal{F}^W}(0, T;\mathbb{R}^n)$ for all $i\in\cM$ be the unique solution of the following linear multidimensional BSDE:
\begin{align*}
\begin{cases}
d\psi(t, i)=-\Big(r(t, i)\psi(t, i)+\sum\limits_{j=1}^{\ell} q_{ij}\psi(t, j)\Big)dt+\xi(t, i)'dW(t), \\
\psi(T, i)=1, \ \mbox{ for all $i\in\cM$.}
\end{cases}
\end{align*}
Then the mean-variance problem \eqref{optm} is feasible for any $z\geq x\E e^{\int_0^Tr(t, \alpha_t)dt}$ if and only if
\begin{equation}
\label{feasible}
\int_0^T \mathbb{P}\Big(\psi(t, \alpha_t)b(t, \alpha_t)+\sigma(t, \alpha_t)\xi(t, \alpha_t)\notin\dualgamma\Big) dt>0.
\end{equation}
\end{theorem}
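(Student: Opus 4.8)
The idea is to turn feasibility into a \emph{linear reachability} statement by using $(\psi,\xi)$ to build a state‑price/num\'eraire process, and then to resolve that statement with a polar–cone argument supplemented by a measurable selection. Throughout write $\kappa(t):=\psi(t,\alpha_t)b(t,\alpha_t)+\sigma(t,\alpha_t)\xi(t,\alpha_t)$, an $\mathbb R^{m}$-valued $\mathcal F_t$-adapted process.

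\textbf{Step 1: the reachability identity.} First I would apply It\^{o}'s formula for Markov chains (cf.\ \cite{HLT}) to $\psi(t,\alpha_t)$. The drift term $-\sum_j q_{ij}\psi(t,j)$ in the BSDE for $(\psi,\xi)$ cancels against the compensator of the pure-jump part of $t\mapsto\psi(t,\alpha_t)$, so
\[
 d\psi(t,\alpha_t)=-r(t,\alpha_t)\psi(t,\alpha_t)\,dt+\xi(t,\alpha_t)'dW(t)+dM(t),
\]
where $M$ is a martingale built from the compensated jump processes of $\alpha$, orthogonal to $W$. For $\pi\in\mathcal U$ with wealth $X$ solving \eqref{wealth}, It\^{o}'s product rule applied to $Y(t):=\psi(t,\alpha_t)X(t)$ makes the $rX$-terms cancel and gives
\[
 dY(t)=\pi(t)'\kappa(t)\,dt+(\cdots)\,dW(t)+X(t)\,dM(t).
\]
Since $\psi$ is bounded (linear BSDE with bounded coefficients and bounded terminal value), $\xi\in L^2$ and $\pi\in L^2$, we have $\kappa\in L^2_{\mathcal F}(0,T;\mathbb R^m)$ and $\pi'\kappa\in L^1$. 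Localizing the two local‑martingale terms by stopping times $\tau_n\uparrow\infty$, taking expectations, and letting $n\to\infty$ (dominated convergence for $\psi(\tau_n\wedge T,\alpha_{\tau_n\wedge T})X(\tau_n\wedge T)$ using the standard estimate $\E\sup_{[0,T]}X^2<\infty$, and for $\int_0^{\tau_n\wedge T}\pi'\kappa\,dt$ using $\pi'\kappa\in L^1$) yields
\[
 \E[X(T)]=\psi(0,i_0)\,x+\E\int_0^T\pi(t)'\kappa(t)\,dt .
\]
Taking $\pi\equiv0$ identifies $\psi(0,i_0)\,x=x\,\E e^{\int_0^T r(t,\alpha_t)dt}$, so for $z\ge x\,\E e^{\int_0^T r\,dt}$ feasibility is \emph{equivalent} to the existence of $\pi\in\mathcal U$ with $\E\int_0^T\pi'\kappa\,dt=\eta$, where $\eta:=z-x\,\E e^{\int_0^T r\,dt}\ge0$.

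\textbf{Step 2: necessity.} If \eqref{feasible} fails then $\kappa(t,\omega)\in\dualgamma$ for a.e.\ $(t,\omega)$, so by the definition of $\dualgamma$, $\pi(t,\omega)'\kappa(t,\omega)\le0$ a.e.\ for every $\pi\in\mathcal U$, whence $\E\int_0^T\pi'\kappa\,dt\le0$. Then $z=x\,\E e^{\int_0^T r\,dt}+1$ (which lies in the admissible range) is not reachable, so the problem is not feasible for all admissible $z$.

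\textbf{Step 3: sufficiency, and the main obstacle.} Assume \eqref{feasible}; by Fubini the set $\mathcal A:=\{(t,\omega):\kappa(t,\omega)\notin\dualgamma\}$ has positive $dt\times d\mathbb P$-measure. It suffices to produce a single $\hat\pi\in\mathcal U$ with $c_0:=\E\int_0^T\hat\pi'\kappa\,dt>0$, since then $\pi:=(\eta/c_0)\hat\pi\in\mathcal U$ (it lies in $\Gamma$ because $\Gamma$ is a cone and $\eta/c_0\ge0$, and it remains square-integrable) achieves $\E[X(T)]=z$; the case $\eta=0$ is covered by $\pi\equiv0$. To construct $\hat\pi$, observe that $y\notin\dualgamma$ is equivalent to $\Phi(y):=\max_{v\in\Gamma,\,|v|\le1}v'y>0$, the maximum being attained since $\Gamma\cap\overline{B}_1$ is compact and nonempty. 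Fixing a countable dense subset $\{v_k\}\subset\Gamma\cap\overline{B}_1$, the map $(t,\omega)\mapsto\Phi(\kappa(t,\omega))=\sup_k v_k'\kappa(t,\omega)$ is measurable and $\mathcal F_t$-adapted; setting $\hat\pi(t,\omega):=v_{k(t,\omega)}$ with $k(t,\omega):=\min\{k:\ v_k'\kappa(t,\omega)>\tfrac12\Phi(\kappa(t,\omega))\}$ on $\{\Phi(\kappa)>0\}=\mathcal A$ and $\hat\pi:=0$ elsewhere yields an admissible control ($\hat\pi\in\Gamma$, $|\hat\pi|\le1$) with $\hat\pi'\kappa\ge0$ everywhere and $\hat\pi'\kappa>0$ precisely on $\mathcal A$, so $c_0>0$. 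I expect the two delicate points to be the localization/uniform-integrability bookkeeping behind the identity in Step 1 and the measurability of the selection in Step 3; the polar-cone logic connecting \eqref{feasible} to reachability is then routine.
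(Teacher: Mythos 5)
Your proposal is correct and follows essentially the same route as the paper: derive the linear identity $\E[X(T)]=x\,\E e^{\int_0^T r\,dt}+\E\int_0^T\pi'\kappa\,dt$ by It\^{o}'s formula applied to $\psi(t,\alpha_t)X(t)$ (the paper does the equivalent computation on $X_1\psi$ after writing $X^\beta=X_0+\beta X_1$), then use the polar-cone definition of $\dualgamma$ for necessity and a bounded measurable selection from $\Gamma\cap\overline B_1$ plus cone-scaling for sufficiency. Your explicit countable-dense-subset selection is just a concrete construction of the measurable map $f$ the paper invokes, so there is no substantive difference.
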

\begin{proof}
For any $\pi\in\mathcal{U}$ and real number $\beta\geq 0$, we construct a portfolio $\pi^\beta(t):=\beta\pi(t)$.
Then $\pi^\beta\in\mathcal{U}$.
Let $X^\beta$ be the wealth process corresponding to $\pi^\beta$. Then $X^\beta(t)=X_0(t)+\beta X_1(t)$, where $X_{0}$ follows the SDE
\begin{align*}
\begin{cases}
dX_0(t)=r(t, \alpha_t)X_0(t)dt, \\
X_0(0)=x, \ \alpha_0=i_0,
\end{cases}
\end{align*}
and $X_{1}$ follows the SDE
\begin{align*}
\begin{cases}
dX_1(t)=[r(t, \alpha_t)X_1(t)+\pi(t)'b(t, \alpha_t)]dt+\pi(t)'\sigma(t, \alpha_t)dW(t), \\
X_1(0)=0, \ \alpha_0=i_0.
\end{cases}
\end{align*}
Applying It\^{o}'s lemma to $X_1(t)\psi(t, \alpha_t)$, we have
\begin{align}
\E \big(X^\beta(T)\big)&=\E \big(X_0(T)\big)+\beta \E \big(X_1(T)\big)\nonumber\\
&=x \E e^{\int_0^Tr(t, \alpha_t)dt}+\beta\E \int_0^T\pi(t)'(\psi(t, \alpha_t)b(t, \alpha_t)+\sigma(t, \alpha_t)\xi(t, \alpha_t))dt. \label{expectation}
\end{align}

We now prove the `` if '' part. Let $f$ be a measurable function such that $f(y)\in\Gamma$, $|f(y)|\leq1$ and
\[y'f(y)=\max_{x\in\Gamma, \; |x|\leq1} x'y, \]
for any $y\in \R^{m}$. Then $y'f(y)=0$ for $y\in\dualgamma$, and $y'f(y)>0$ for $y\notin\dualgamma$.
Choose
\begin{align*}
\pi(t)=f\Big(\psi(t, \alpha_t)b(t, \alpha_t)+\sigma(t, \alpha_t)\xi(t, \alpha_t)\Big),
\end{align*}
in \eqref{expectation}, then under \eqref{feasible}, the last integral is positive. So
for any $z\geq x \E e^{\int_0^Tr(t, \alpha_t)dt}$, there exists $\beta\geq 0$ such that $\E \big(X^\beta(T)\big)=z$.

Conversely, suppose that \eqref{optm} is feasible for any $z\geq x \E e^{\int_0^Tr(t, \alpha_t)dt}$. Then for any $z> x \E e^{\int_0^Tr(t, \alpha_t)dt}$, there is a $\pi\in\mathcal U$, such that $\E (X(T))=\E (X_0(T))+\E(X_1(T))=z$. Notice that $\E (X_0(T))=x \E e^{\int_0^Tr(t, \alpha_t)dt}$, thus it is necessary that there is a $\pi\in\mathcal U$ such that \[\E(X_1(T))=\E \int_0^T\pi(t)'(\psi(t, \alpha_t)b(t, \alpha_t)+\sigma(t, \alpha_t)\xi(t, \alpha_t))dt>0.\] If \eqref{feasible} was not true. Then $\psi(t, \alpha_t)b(t, \alpha_t)+\sigma(t, \alpha_t)\xi(t, \alpha_t)\in\dualgamma$ a.s, for a.e. $t\in[0, T]$. It would follow that
\[\pi(t)'(\psi(t, \alpha_t)b(t, \alpha_t)+\sigma(t, \alpha_t)\xi(t, \alpha_t))\leq 0, \ \mbox{a.s, for a.e. $t\in[0, T]$}, \]
for any $\pi\in\mathcal U$, leading to a contradiction.
\end{proof}

From the proof, we immediately see that
\begin{corollary}
The mean-variance problem \eqref{optm} is feasible for all $z\geq x \E e^{\int_0^Tr(t, \alpha_t)dt}$, if and only if, it is feasible for some $z>x \E e^{\int_0^Tr(t, \alpha_t)dt}$.
\end{corollary}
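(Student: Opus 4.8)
The plan is to deduce the corollary directly from the two halves of the proof of Theorem~\ref{theoremfeasible}, the point being that its ``conversely'' part already works from a \emph{single} admissible target level rather than from all of them. The ``only if'' direction is trivial: feasibility of \eqref{optm} for all $z\geq x\E e^{\int_0^Tr(t,\alpha_t)dt}$ in particular gives feasibility for some such $z$. So the whole content is the ``if'' direction, which I would handle as follows.

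First I would fix some $z_0>x\E e^{\int_0^Tr(t,\alpha_t)dt}$ for which \eqref{optm} is feasible, and take $\pi\in\mathcal U$ with $\E(X(T))=z_0$. Decomposing $X=X_0+\beta X_1$ with $\beta=1$ exactly as in the proof of Theorem~\ref{theoremfeasible}, applying It\^o's lemma to $X_1(t)\psi(t,\alpha_t)$, and using $\E(X_0(T))=x\E e^{\int_0^Tr(t,\alpha_t)dt}$, identity \eqref{expectation} gives
\[
\E\int_0^T\pi(t)'\big(\psi(t,\alpha_t)b(t,\alpha_t)+\sigma(t,\alpha_t)\xi(t,\alpha_t)\big)\,dt=z_0-x\E e^{\int_0^Tr(t,\alpha_t)dt}>0 .
\]
Then I would argue by contradiction that this forces \eqref{feasible}: if \eqref{feasible} failed, then $\psi(t,\alpha_t)b(t,\alpha_t)+\sigma(t,\alpha_t)\xi(t,\alpha_t)\in\dualgamma$ for a.e.\ $t\in[0,T]$, a.s.; since $\pi(t)\in\Gamma$ a.s.\ a.e.\ and $\dualgamma$ is the polar cone of $\Gamma$, the integrand above would be $\leq 0$ a.s.\ a.e., so the expectation would be $\leq 0$, contradicting the displayed strict inequality. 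Hence \eqref{feasible} holds.

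Finally, \eqref{feasible} is precisely the hypothesis of the ``if'' part of Theorem~\ref{theoremfeasible}, which then yields feasibility of \eqref{optm} for \emph{every} $z\geq x\E e^{\int_0^Tr(t,\alpha_t)dt}$, completing the argument. I do not expect any genuine obstacle here; the only thing worth emphasizing is the bookkeeping observation that condition \eqref{feasible} is the common pivot linking feasibility at one level to feasibility at all levels, so no new estimate is needed beyond what was already established in the proof of Theorem~\ref{theoremfeasible}.
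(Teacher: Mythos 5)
Your argument is correct and is exactly what the paper means by ``From the proof, we immediately see that'': the ``conversely'' half of Theorem \ref{theoremfeasible} only needs feasibility at a single $z_0>x\E e^{\int_0^Tr(t,\alpha_t)dt}$ to force \eqref{feasible}, and the ``if'' half then returns feasibility for every admissible target level. No gaps; this matches the paper's intended reasoning.
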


Because we are only interested in the feasible case, for the rest part of this subsection, we alway assume \eqref{feasible} holds.

\begin{remark}
From the proof, we also see that, when $\Gamma$ is symmetric, the mean-variance problem \eqref{optm} is feasible for all $z\in\R$, if and only if, it is feasible for some $z\neq x \E e^{\int_0^Tr(t, \alpha_t)dt}$.
\end{remark}
\subsection{Random regime switching market without portfolio constraint}
In this subsection, we assume the portfolio is unconstrained, i.e. $\Gamma=\mathbb{R}^m$.

In this case $\dualgamma=\{0\}$ and the feasible condition \eqref{feasible} is equivalent to
\[\E \int_0^T\Big|\psi(t, \alpha_t)b(t, \alpha_t)+\sigma(t, \alpha_t)\xi(t, \alpha_t)\Big|dt>0.\]
We remark that Problem \eqref{optm} is feasible for all $z\in\R$ under the above condition.
Furthermore, both \eqref{P1} and \eqref{P2} reduce to the same one multidimensional BSDE:
\begin{align}
\label{P}
\begin{cases}
dP(i)=-\Big[2r(i)P(i)-\frac{1}{P(i)}\left(P(i)b(i)+\sigma(i)\Lambda(i)\right)'(\sigma(i)\sigma(i)')^{-1}\left(P(i)b(i)+\sigma(i)\Lambda(i)\right)\\
\qquad\qquad\qquad+\sum\limits_{j=1}^{\ell} q_{ij}P(j)\Big]dt+\Lambda(i)'dW, \\
P(T, i)=1, \\
P(t, i)>0, \ \mbox{ for a.e. $t\in[0, T]$ and all $i\in\cM$.}
\end{cases}
\end{align}

From Theorem \ref{Riccatisingular}, we know \eqref{P} admits a unique solution $(P(i), \Lambda(i))_{i=1}^{\ell}$, such that $c\leq P(t, i)\leq C$ and $\int_0^\cdot\Lambda(s, i)dW(s)$ is a BMO martingale, for some constants $C>c>0$ and all $i\in\cM$.

To construct a solution for Problem \eqref{optmun}, we need to consider the following linear multidimensional BSDE:
\begin{align}
\label{H}
\begin{cases}
dH(i)=\Big[r(i)H(i)+b(i)'\left(\sigma(i)\sigma(i)'\right)^{-1}\sigma(i)\eta(i)\\
\qquad\qquad\quad+\frac{1}{P(i)}\Lambda(i)'\left(\sigma(i)'\left(\sigma(i)\sigma(i)'\right)^{-1}\sigma(i)-I_n\right)\eta(i)\\
\qquad\qquad\quad-\frac{1}{P(i)}\sum\limits_{j\neq i} q_{ij}P(j)(H(j)-H(i))\Big]dt+\eta(i)'dW, \\
H(T, i)=1, \mbox{ for all $i\in\cM$.}
\end{cases}
\end{align}
Its solution is defined as
\begin{definition}
A vector process $(H(i), \ \eta(i))_{i=1}^{\ell}$ is called a solution of the multidimensional BSDE \eqref{H}, if its satisfies \eqref{H}, and $(H(i), \ \eta(i))\in L^\infty_{\mathcal{F}^W}(0, T; \mathbb {R})\times \BMO$ for all $i\in\cM$.
\end{definition}

\begin{remark}
When $r$, $\mu$, $\sigma$ are deterministic, we have $H$ is deterministic and $\eta\equiv0$. Furthermore, when $m=n$, \eqref{P} and \eqref{H} coincide with the ODEs in \cite{ZY}.
\end{remark}

Briand and Confortola \cite{BC} obtained the existence and uniqueness of the solution of BSDEs with stochastic Lipschitz condition, but limited to $1$-dimensional case. The system \eqref{H} is a linear BSDE, but it does not satisfy the Lipschitz condition (because $\Lambda$ is unbounded). Furthermore, it is multidimensional, so their results can not be directly applied here.

We now address ourselves to the solvability of \eqref{H}. Define a closed convex set $\mathcal{B}$ as
\begin{align*}
\mathcal{B}=\Big\{U\in L^\infty_{\mathcal{F}^W}(0, T; \mathbb {R}^{\ell})\mid 0\leq e^{At}U(t, i)\leq B\ \mbox{ for a.e. $t\in[0, T]$ and every $i\in\cM$}\Big\}
\end{align*}
with the norm
\[|U|_\infty:=\max\limits_{i\in\cM}\underset{(t, \omega)\in[0, T]\times\Omega}{\esssup}e^{At}U(t, i), \]
where $A$ and $B$ are two positive scalars to be chosen later.
Then $(\mathcal{B}, |\cdot|_\infty)$ is a compact metric space.

We now use contraction mapping method to show
\begin{theorem}
\label{Hexis}
BSDE \eqref{H} admits a unique solution $(H(i), \ \eta(i))_{i=1}^{\ell}$.
\end{theorem}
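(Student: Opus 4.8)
The plan is to solve \eqref{H} by a Picard iteration on the metric space $(\mathcal B,|\cdot|_\infty)$ and then invoke the Banach fixed point theorem. By Theorem \ref{Riccatisingular} applied to \eqref{P}, the data entering \eqref{H} satisfy $c\le P(i)\le C$ for some constants $C>c>0$ and $\Lambda(i)\in\BMO$, for all $i\in\cM$. Consequently the $\mathbb{R}^n$-valued coefficient of $\eta(i)$ in the drift of \eqref{H}, namely $\theta(i):=\sigma(i)'(\sigma(i)\sigma(i)')^{-1}b(i)+\frac{1}{P(i)}(\sigma(i)'(\sigma(i)\sigma(i)')^{-1}\sigma(i)-I_n)\Lambda(i)$, belongs to $\BMO$, while the coefficient $a(i):=-r(i)-\frac{1}{P(i)}\sum_{j\neq i}q_{ij}P(j)$ of $H(i)$ is bounded, say $|a(i)|\le K$. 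Thus each scalar component of \eqref{H} is a linear BSDE with bounded zero-order coefficient and a \emph{stochastic} (BMO) Lipschitz coefficient in the martingale variable, of the kind treated by Briand and Confortola \cite{BC}; the only genuinely new feature is the coupling through the $q_{ij}$.

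First I would define the iteration map. Given $\widehat H=(\widehat H(i))_{i=1}^\ell\in\mathcal B$, for each fixed $i\in\cM$ freeze the off-diagonal unknowns $H(j)$, $j\neq i$, at $\widehat H(j)$ and solve the scalar linear BSDE
\begin{align*}
H(t,i)=1+\int_t^T\Big[a(s,i)H(s,i)-\theta(s,i)'\eta(s,i)+\tfrac{1}{P(s,i)}\textstyle\sum_{j\neq i}q_{ij}P(s,j)\widehat H(s,j)\Big]ds-\int_t^T\eta(s,i)'dW(s).
\end{align*}
The Girsanov transformation generated by the BMO martingale $\int_0^\cdot\theta(s,i)'dW(s)$ (legitimate precisely because $\theta(i)\in\BMO$) removes the term linear in $\eta(i)$; under the new measure $\widetilde{\mathbb P}^i$ the equation has bounded coefficients, hence a unique solution given by the explicit representation
\begin{align*}
H(t,i)=\widetilde{\mathbb E}^i\Big[e^{\int_t^T a(s,i)ds}+\int_t^T e^{\int_t^s a(u,i)du}\,\tfrac{1}{P(s,i)}\textstyle\sum_{j\neq i}q_{ij}P(s,j)\widehat H(s,j)\,ds\;\Big|\;\mathcal F^W_t\Big],
\end{align*}
which is bounded because $a$ and the source term are bounded (using $\widehat H\in L^\infty$, $q_{ij}\ge0$ for $i\neq j$, and $c\le P\le C$). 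Applying It\^{o}'s formula to $H(i)^2$ under $\mathbb P$ and exploiting the BMO property of $\theta(i)$, exactly as in Step 1 of the proof of Theorem \ref{Riccatistandard}, yields $\eta(i)\in\BMO$. Set $\mathcal T\widehat H:=(H(i))_{i=1}^\ell$.

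Next I would check $\mathcal T(\mathcal B)\subseteq\mathcal B$ and that $\mathcal T$ is a contraction. The source term in the representation is non-negative (since $q_{ij}\ge0$, $P>0$ and $\widehat H(j)\ge0$), so $H(i)\ge0$; bounding $|a|\le K$ and, with $c_0:=\frac{C}{c}(\ell-1)\max_{k,j}|q_{kj}|$, the source by $c_0Be^{-As}$ (using $e^{As}\widehat H(s,j)\le B$), the representation gives $e^{At}H(t,i)\le e^{(A+K)T}+\frac{c_0e^{KT}}{A}B$, so one first fixes $A$ large and then takes $B$ large enough that the right-hand side is $\le B$, keeping $\mathcal T\widehat H\in\mathcal B$. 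For the contraction, if $\widehat H^1,\widehat H^2\in\mathcal B$ then $H^1(i)-H^2(i)$ solves the same linear equation (same $a(i)$, same $\theta(i)$) with zero terminal value and source $\tfrac{1}{P(i)}\sum_{j\neq i}q_{ij}P(j)(\widehat H^1(j)-\widehat H^2(j))$, and the representation together with $|\widehat H^1(s,j)-\widehat H^2(s,j)|\le e^{-As}|\widehat H^1-\widehat H^2|_\infty$ yields, whenever $A>K$, the estimate $e^{At}|H^1(t,i)-H^2(t,i)|\le\frac{c_0}{A-K}|\widehat H^1-\widehat H^2|_\infty$; choosing $A>K+c_0$ makes $\mathcal T$ a strict contraction on $(\mathcal B,|\cdot|_\infty)$ uniformly in the horizon $T$ — this is exactly the purpose of the exponential weight. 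The Banach fixed point theorem then produces a unique $H\in\mathcal B$, and the attached $(\eta(i))_{i=1}^\ell\subset\BMO$ makes $(H(i),\eta(i))_{i=1}^\ell$ a solution of \eqref{H} in the required class. For uniqueness in $L^\infty_{\mathcal F^W}(0,T;\mathbb R)\times\BMO$, the difference of two such solutions obeys the linear system with zero terminal value and source driven by the difference itself, so the same weighted estimate gives $\sup_{t,i}e^{At}|H^1(t,i)-H^2(t,i)|\le\frac{c_0}{A-K}\sup_{t,i}e^{At}|H^1(t,i)-H^2(t,i)|$ for every $A>K$; since the supremum is finite (the solutions are bounded and $T<\infty$), taking $A>K+c_0$ forces it to vanish.

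The main obstacle is the unboundedness of $\Lambda$ inherited from \eqref{P}, which makes $\theta(i)$ merely a BMO process and not a bounded one; this is why neither the classical theory of Lipschitz multidimensional BSDEs nor the comparison Lemma \ref{comparison} can be applied directly. The remedy is the BMO-martingale toolkit recalled before Lemma \ref{comparison} — the Girsanov transformation to strip off the $\theta(i)'\eta(i)$ term and an energy-type a priori estimate to retain $\eta(i)\in\BMO$ — combined with the exponentially weighted norm on $\mathcal B$, which turns the $q$-coupled system into a genuine contraction on a complete metric space regardless of the length of $[0,T]$.
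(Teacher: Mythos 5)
Your proof is correct, and the existence half is essentially the paper's argument: the same frozen-coefficient scalar linear BSDEs, the same Girsanov change of measure generated by the BMO process $\theta(i)$ (the paper's $c(i)$), the same conditional-expectation representation, and the same exponentially weighted sup-norm on $\mathcal B$ to get invariance and a strict contraction. Where you genuinely diverge is uniqueness. The paper's route is indirect: since $\mathcal B$ consists of \emph{non-negative} processes, an arbitrary solution $(H,\eta)\in L^\infty_{\mathcal F^W}(0,T;\mathbb R)\times\BMO$ is not automatically in the domain of the contraction, so the paper first proves $H\ge 0$ by applying It\^o's formula to $(H(t,i)^-)^2$, passing to $\widetilde{\mathbb P}^i$, taking essential suprema and invoking Gronwall, and only then concludes from uniqueness of the fixed point in $\mathcal B$. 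You instead apply the Girsanov representation and the weighted estimate directly to the difference of two arbitrary bounded solutions: the difference satisfies the same linear system with zero terminal data and source driven by itself, the representation is legitimate because $\Delta\eta(i)\in\BMO$ keeps the stochastic integral a true martingale under $\widetilde{\mathbb P}^i$, and the bound $\|\Delta H\|_A\le\frac{c_0}{A-K}\|\Delta H\|_A$ with $A>K+c_0$ forces $\Delta H=0$ (and then $\Delta\eta=0$). This bypasses the sign question entirely and is arguably more streamlined; the paper's detour through $H\ge 0$ has the side benefit of establishing non-negativity of $H$, which is used later (e.g.\ $H(0,i_0)>0$ in the proof of Theorem \ref{efficientth}), but as a uniqueness argument yours is complete as stated. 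One small point: the fixed-point step needs $(\mathcal B,|\cdot|_\infty)$ to be complete (a closed subset of $L^\infty$), which is what you invoke and is all that Banach's theorem requires; the paper's description of $\mathcal B$ as compact is not needed.
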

\begin{proof}
We start with the existence.
Fixed any $U\in \mathcal{B}$. For each fixed $i\in\cM$, the following 1-dimensional linear BSDE, by \cite{Ko} (or \cite{CZ}),
\begin{align*}
%\label{H1}
\begin{cases}
dH(i)=\bigg[r(i)H(i)+\frac{H(i)}{P(i)}\sum\limits_{j\neq i} q_{ij}P(j)-\frac{1}{P(i)}\sum\limits_{j\neq i} q_{ij}P(j)U(j)+b(i)'\left(\sigma(i)\sigma(i)'\right)^{-1}\sigma(i)\eta(i)\\
\qquad\qquad\quad
+\frac{1}{P(i)}\Lambda(i)'\left(\sigma(i)'\left(\sigma(i)\sigma(i)'\right)^{-1}\sigma(i)-I_n\right)\eta(i) \bigg]dt+\eta(i)'dW, \\
H(T, i)=1,
\end{cases}
\end{align*}
has a unique adapted solution $(H(i), \ \eta(i))\in L^\infty_{\mathcal{F}^W}(0, T; \mathbb{R})\times \BMO$.
We call the map $U\mapsto (H(1), \ldots, H(\ell))$ as $\Theta$.

We next show $\Theta(\mathcal{B})\subset \mathcal{B}$ for proper chosen $A$ and $B$.
For each fixed $i\in\cM$, set
\[c(i)=\sigma(i)'\left(\sigma(i)\sigma(i)'\right)^{-1}b(i)+\frac{1}{P(i)}
\Big(\sigma(i)'\left(\sigma(i)\sigma(i)'\right)^{-1}\sigma(i)-I_n\Big)\Lambda(i).\]
By Theorem \ref{Riccatisingular}, $\int_0^\cdot c(i)'dW(s)$ is a BMO martingale and $\widetilde W^i(t):=W(t)+\int_0^tc(s, i)ds$ is a Brownian motion under the equivalent probability measure $\widetilde{\mathbb{P}}^i$ defined by
\[
\frac{d\widetilde{\mathbb{P}}^i}{d\mathbb{P}}\bigg|_{\mathcal{F}_T}=\mathcal{E}\bigg(-\int_0^Tc(s, i)'dW(s)\bigg).
\]
Let $\widetilde\E^i$ denote the corresponding expectation.
Let
\begin{align*}
e(t, i)=\exp\Bigg(-{\int_0^t\bigg[r(s, i)+\frac{1}{P(s, i)}\sum\limits_{j\neq i} q_{ij}P(s, j)\bigg]ds}\Bigg).
\end{align*}
Applying It\^{o}'s formula to $e(t, i)H(t, i)$, we have
\begin{align}
\label{Hrepre}
H(t, i)
=e(t, i)^{-1}\widetilde\E_t^i\bigg[e(T, i)+\int_t^T e(i) \frac{1}{P(i)}\sum\limits_{j\neq i} q_{ij}P(j)U(j)ds\bigg],
\end{align}
which is non-negative.
Because $e$, $e^{-1}$, $q$ and $P$ are all uniformly bounded,
it follows
\begin{align*}
e^{At}H(t, i)&\leq c \widetilde\E_t^i\bigg[e^{At}+\int_t^T e^{A(t-s)}\sum_{j\neq i}e^{As}U(j)ds\bigg] \\
&\leq c \widetilde\E_t^i\bigg[e^{AT}+\int_t^T e^{A(t-s)} |U|_\infty ds\bigg]\\
&\leq ce^{AT}+\frac{c}{A} |U|_\infty.
\end{align*}
This by definition leads to
\begin{align*}
|H|_\infty\leq ce^{AT}+\frac{c}{A} |U|_\infty.
\end{align*}
Note $c$ does not depend on $A$ in above.
It is not hard to see from the above inequality that $\Theta(\mathcal{B})\subset \mathcal{B}$, provided
\begin{align}
\label{B}
B=ce^{AT}+\frac{c}{A}B,
\end{align}
in which case $B=\frac{ce^{AT}}{1-c/A}$, so that $B$ goes to infinity if $A$ does so.

We now show $\Theta$ is a strict contraction, provided that $A$ is sufficiently large and $B$ satisfies \eqref{B}.
For any $U$, $\widetilde U\in\mathcal{B}$, let $H=\Theta(U)$, $\widetilde H=\Theta(\widetilde U)$, and set
\[\Delta H(t, i)=H(t, i)-\widetilde H(t, i), \ \text{and} \ \Delta U(t, i)=U(t, i)-\widetilde U(t, i).\]
Then by \eqref{Hrepre},
\begin{align*}
%\label{ineqH}
e^{At}|\Delta H(t, i)|&\leq e(t, i)^{-1}\widetilde\E_t^i\bigg[\int_t^T e^{A(t-s)} e(s, i) \frac{1}{P(s, i)}\sum\limits_{j\neq i} q_{ij}P(s, j)e^{As}|\Delta U(s, j)|ds\bigg],
\end{align*}
which implies, again by the boundedness of the coefficients,
\begin{align*}
e^{At}|\Delta H(t, i)| &\leq c|\Delta U|_\infty \widetilde\E_t^i\bigg[\int_t^T e^{A(t-s)}ds\bigg] \leq \frac{c}{A}|\Delta U|_\infty,
\end{align*}
or
\[ |\Delta H|_\infty\leq \frac{c}{A}|\Delta U|_\infty.\]
This means $\Theta$ is a strict contraction mapping on $\mathcal{B}$, provided $A>c$ and $B$ satisfies \eqref{B}.
Since $(\mathcal{B}, \ |\cdot|_\infty)$ is a compact metric space, the contraction mapping $\Theta$ has a fixed point $H$ in $\mathcal{B}$. Clearly, $(H, \eta)$ solves the system \eqref{H}. This proves the existence.

It is left to show the uniqueness. Suppose $(H, \eta)$ solves the system \eqref{H}.
If we can show that $H\geq 0$. Then because $H\in L^\infty_{\mathcal{F}^W}(0, T; \mathbb {R}^\ell)$, we have $H\in\mathcal{B}$ for $A$ sufficiently large and $B$ satisfying \eqref{B}.
Because $\Theta$ is a contraction mapping on $\mathcal{B}$, which has at most one fixed point, we conclude that \eqref{H} has at most one solution. Our problem now reduce to showing that $H\geq 0$.

The coefficients of \eqref{H} do not satisfy the Lipschitz condition in Lemma \ref{comparison}, so we cannot directly apply this lemma to prove that $H\geq 0$. But we can use the idea of its proof to deduce our conclusion. Applying It\^{o}'s formula to $(H(t, i)^{-})^{2}$, we have
\begin{align}
(H(t, i)^{-})^{2} =&-\int_{t}^{T}\bigg(2(H(i)^-)^{2}r(i)+\frac{2(H(i)^-)^{2}}{P(i)}\sum\limits_{j\neq i} q_{ij}P(j)\nonumber\\
&\qquad\qquad\quad+\frac{2H(i)^-}{P(i)}\sum\limits_{j\neq i} q_{ij}P(j)H(j)\bigg)ds\nonumber\\
& -\int_{t}^{T}2H(i)^-\eta(i)' d \widetilde W^i-\int_{t}^{T}I_{\{H(s, i)\leq 0\}}|\eta(i)|^{2}ds.
\label{Hgeq0}
\end{align}
By AM-GM inequality,
\[-H(i)^-H(j)=-H(i)^-H(j)^++H(i)^-H(j)^-\leq H(i)^-H(j)^-\leq \frac{1}{2}(H(i)^-)^{2}+\frac{1}{2}(H(j)^-)^{2}.\]
Dropping the last integral and using the the boundedness of the coefficients, we deduce from \eqref{Hgeq0} and the above inequality that
\begin{align*}
(H(t, i)^{-})^{2} &\leq c \int_{t}^{T}\Big((H(i)^-)^{2}+\sum_{j\neq i}(H(j)^-)^{2}\Big)ds-\int_{t}^{T}2H(i)^-\eta(i)' d \widetilde W^i(t).
\end{align*}
Taking conditional expectation $\widetilde\E_t^i$ on both sides gives
\begin{align*}
(H(t, i)^{-})^{2} &\leq c \int_{t}^{T} \sum_{j=1}^{\ell}\widetilde\E_t^i\big[(H(j)^-)^{2}\big] ds.
\end{align*}
Set
\[ E(t,i)=\underset{\omega\in\Omega}{\esssup}{\left(H(t,i)^-\right)^2}, \]
then
\[ E(t,i)\leq c \int_t^T \sum_{j=1}^{\ell}E(s,j)ds. \]
Thus
\[ 0\leq \sum_{j=1}^{\ell} E(t,j)\leq c\ell \int_t^T \sum_{j=1}^{\ell} E(s,j)ds.\]
It then follows from Gronwall's inequality that $\sum_{j=1}^{\ell} E(t,j)=0$, so
$H(t,i)\geq 0$ for a.e. $t\in[0,T]$ and all $i\in\cM$.
\end{proof}

\begin{remark}
If the interest rate $r(\cdot,\cdot,i)\geq0$, we can prove $H(t,i)\leq 1$, for a.e. $t\in[0,T]$ and all $i\in\cM$ by similar method as in Theorem \ref{Hexis}, which means that $H(t,i)$ is a genuine discount.
\end{remark}

Denote $K(i)=P(i)H(i)$, and $L(i)=P(i)\eta(i)+\frac{K(i)\Lambda(i)}{P(i)}$, then \eqref{H} is, by It\^{o}'s lemma, equivalent to
\begin{align*}
%\label{K}
\begin{cases}
dK(i)=\bigg[\Big(b(i)'\left(\sigma(i)\sigma(i)'\right)^{-1}b(i)-r(i)
+\frac{\Lambda(i)'\sigma(i)'\left(\sigma(i)\sigma(i)'\right)^{-1}b(i)}{P(i)}\Big)K(i)\\
\qquad \qquad\quad+\big(b(i)+\frac{\sigma(i)\Lambda(i)}{P(i)}\big)'\left(\sigma(i)\sigma(i)'\right)^{-1}\sigma(i)L(i)
-\sum\limits_{j=1}^{\ell}q_{ij}K(j)\bigg]dt+L(i)'dW, \\
K(T, i)=1, \ \mbox{ for all $i\in\cM$.}
\end{cases}
\end{align*}
We use the process $K$ instead of $H$ to present our following results.

\begin{theorem}
\label{unconstraint}
The relaxed problem \eqref{optmun} has an optimal feedback control
\begin{align}
\label{portfolio}
\pi^*(t, X, i)&=-\left(\sigma(t, i)\sigma(t, i)'\right)^{-1}
\Bigg[\Big(b(t, i)+\frac{\sigma(t, i)\Lambda(t, i)}{P(t, i)}\Big)X\nonumber\\
&\qquad\qquad\qquad\qquad\qquad-(z+\lambda)
\frac{K(t, i)b(t, i)+\sigma(t, i)L(t, i)}{P(t, i)}\Bigg].%\nonumber\\
%&=-\left(\sigma(t, i)\sigma(t, i)'\right)^{-1}
%\Bigg[\Big(b(t, i)+\frac{\sigma(t, i)\Lambda(t, i)}{P(t, i)}\Big)\big(X-(z+\lambda)H(t, i)\big)-(z+\lambda)
%\sigma(t, i)\eta(t, i)\Bigg].
\end{align}
Moreover, the corresponding optimal value is
\begin{align}
\label{value}
\min_{\pi\in\mathcal{U}}\hat{J}(\pi, \lambda)
&=P(0, i_0)x^2-2(z+\lambda)K(0, i_0)x+(z+\lambda)^2-(z+\lambda)^2M-\lambda^2, %\nonumber\\
%&=P(0, i_0)\left(x-(\lambda+z)H(0, i_0)\right)^2+(\lambda+z)^2(1-M-P(0, i_0)H(0, i_0)^2)-\lambda^2,
\end{align}
where $M=\E\int_0^T O(t, \alpha_t) dt, $ and
\[O(t, i)=
\frac{\big(K(t, i)b(t, i)+\sigma(t, i)L(t, i)\big)'
\left(\sigma(t, i)\sigma(t, i)'\right)^{-1}
\big(K(t, i)b(t, i)+\sigma(t, i)L(t, i)\big)}
{P(t, i)}\]
for $i\in\cM$.
\end{theorem}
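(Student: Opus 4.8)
The plan is a verification argument by completion of squares, parallel to the proof of Theorem~\ref{LQcontrol} but carrying the affine term produced by the target level $z+\lambda$. First, recall that $(P(i),\Lambda(i))_{i=1}^{\ell}$, with $0<c\leq P(t,i)\leq C$ and $\Lambda(i)\in\BMO$, is the unique solution of \eqref{P} furnished by Theorem~\ref{Riccatisingular}, and that $(K(i),L(i))_{i=1}^{\ell}$, obtained from the unique solution $(H(i),\eta(i))_{i=1}^{\ell}$ of \eqref{H} of Theorem~\ref{Hexis} via $K(i)=P(i)H(i)$ and $L(i)=P(i)\eta(i)+K(i)\Lambda(i)/P(i)$, solves the $K$-BSDE with $K(i)$ bounded and $L(i)\in\BMO$ (the latter because $P$ is bounded above and below and $\eta,\Lambda\in\BMO$); since $P>0$, the feedback $\pi^{*}$ in \eqref{portfolio} and $O$ are well defined. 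Note also that \eqref{wealth} is the state equation \eqref{state} with $A=r$, $B=b$, $C=0$, $D'=\sigma$, and $\hat J(\pi,\lambda)+\lambda^{2}=\E(X(T)-(z+\lambda))^{2}$, whose quadratic-in-$\pi$ part is exactly that of the unconstrained LQ problem, so the homogeneous part of $\pi^{*}$ is $-\hat v_2(t,P,\Lambda,i)X$ in the notation of Remark~\ref{remark:sym2}.

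Next, fix $\pi\in\mathcal U$ with wealth $X$ and apply It\^o's formula for the Markov chain (see \cite{HLT}) to $P(t,\alpha_t)X(t)^{2}-2(z+\lambda)K(t,\alpha_t)X(t)$. Inserting \eqref{P}, \eqref{wealth} and the dynamics of $K$, the terms $\sum_{j}q_{ij}P(j)$ and $\sum_{j}q_{ij}K(j)$ cancel against the chain's contribution, exactly as in the proof of Theorem~\ref{LQcontrol}, leaving compensated-Poisson and $dW$ local martingales; completing the square in $\pi$ in the remaining drift, with the help of the definitions of $\pi^{*}$ and $O$, gives
\begin{align*}
d\!\left[P(t,\alpha_t)X^{2}-2(z+\lambda)K(t,\alpha_t)X\right]
&=\Big[P(t,\alpha_t)\big|\sigma(t,\alpha_t)'\big(\pi-\pi^{*}(t,X,\alpha_t)\big)\big|^{2}-(z+\lambda)^{2}O(t,\alpha_t)\Big]dt\\
&\qquad+d(\text{local martingale}).
\end{align*}
Localizing by stopping times $\tau_n\uparrow\infty$ that absorb the local martingales, taking expectations and sending $n\to\infty$ — legitimate because $r,b,\sigma$ are bounded so that $\pi\in L^{2}_{\mathcal F}$ yields $X\in L^{2}_{\mathcal F}(\Omega;C(0,T;\R))$, while $P,K$ are bounded, $O\geq0$ and $\Lambda,L\in\BMO$ (whence $\E\int_0^T O\,dt<\infty$; the relevant energy and John--Nirenberg estimates are as in Theorem~\ref{Riccatistandard}) — and recalling $P(T,\cdot)=K(T,\cdot)\equiv1$, we obtain
\begin{align*}
\hat J(\pi,\lambda)
&=P(0,i_0)x^{2}-2(z+\lambda)K(0,i_0)x+(z+\lambda)^{2}-(z+\lambda)^{2}M-\lambda^{2}\\
&\qquad+\E\!\int_0^T P(t,\alpha_t)\big|\sigma(t,\alpha_t)'\big(\pi(t)-\pi^{*}(t,X(t),\alpha_t)\big)\big|^{2}dt.
\end{align*}
Since $\sigma\sigma'>0$ and $P>0$, the last term is nonnegative, which yields the asserted value of $\min_{\pi\in\mathcal U}\hat J(\pi,\lambda)$, attained exactly when $\pi(t)=\pi^{*}(t,X(t),\alpha_t)$ for a.e.\ $t$, a.s.

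Finally one must check that $\pi^{*}$ is admissible. Since $\Gamma=\R^{m}$ there is no cone constraint to verify; substituting \eqref{portfolio} into \eqref{wealth} gives a linear SDE for the closed-loop wealth $X^{*}$ whose coefficients lie in $L^{2,\mathrm{loc}}_{\mathcal F}$ (because $\Lambda/P,\,L/P\in\BMO$ and $P$ is bounded below) and whose source term lies in $L^{2}$, hence it has a unique strong solution by Gal'chuk \cite{Ga}; a Girsanov change of measure driven by the BMO martingale generated by $\sigma'(\sigma\sigma')^{-1}(b+\sigma\Lambda/P)$ then gives $\E\int_0^T|X^{*}(t)|^{2}dt<\infty$, and therefore $\E\int_0^T|\pi^{*}(t,X^{*}(t),\alpha_t)|^{2}dt<\infty$, so $\pi^{*}\in\mathcal U$ — exactly as in the admissibility lemma preceding Theorem~\ref{LQcontrol}. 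Taking $\pi=\pi^{*}$ in the displayed identity gives equality, hence \eqref{value}. I expect the main obstacle to be twofold: the bookkeeping in the completion of squares — verifying that the affine cross-terms between the expansions of $P(t,\alpha_t)X^{2}$ and $K(t,\alpha_t)X$ assemble precisely into $-(z+\lambda)^{2}O(t,\alpha_t)$ and that the quadratic-in-$\pi$ remainder is precisely $P(t,\alpha_t)\big|\sigma'(\pi-\pi^{*})\big|^{2}$ — and the integrability estimates (boundedness of $P,K$, the $\BMO$ property of $\Lambda,L$, finite second moments of $X$ and $X^{*}$) needed to discard the local-martingale parts after localization.
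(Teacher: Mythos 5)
Your proposal is correct and takes essentially the same route as the paper: the paper's own (sketched) proof applies It\^{o}'s formula for the Markov chain to $P(t,\alpha_t)X(t)^2$ and $K(t,\alpha_t)X(t)$ and completes the square to obtain exactly the identity you display, with penalty term $P(\pi-\pi^*)'\sigma\sigma'(\pi-\pi^*)$ and the $-(z+\lambda)^2O(t,\alpha_t)$ contribution, then reads off \eqref{portfolio} and \eqref{value}. The localization, integrability and admissibility details you add (boundedness of $P,K$, the BMO property of $\Lambda,L$, and the singular-case admissibility argument as in the lemma preceding Theorem \ref{LQcontrol}) are precisely what the paper leaves to the reader.
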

\begin{proof}
The proof is similar to that of Theorem \ref{LQcontrol}, so we leave the details to the interested readers. Just notice that by applying It\^{o}'s lemma to $P(t, \alpha_t)X(t)^2$ and $K(t, \alpha_t)X(t)$, we have
\begin{align*}
&\ \ \ \ \E(X(T)-(z+\lambda))^2\\
&=\E\big[P(T, \alpha_{T})X(T)^2
-2(z+\lambda)K(T, \alpha_{T})X(T)+(z+\lambda)^2\big]\\
&=P(0, i_0)x^2-2(z+\lambda)K(0, i_0)x+(z+\lambda)^2\\
&\quad+\E\int_0^{T}\Bigg\{P(s, \alpha_s)\big(\pi(s)-\pi^*(s, X(s), \alpha_s)\big)' \sigma(s, \alpha_s)\sigma(s, \alpha_s)'\big(\pi(s)-\pi^*(s, X(s), \alpha_s)\big)\\
&\qquad\qquad\qquad-(z+\lambda)^2O(s, \alpha_s)\Bigg\}ds.
\end{align*}
\end{proof}

\begin{theorem}
\label{efficientth}
The optimal portfolio of Problem \eqref{optm} corresponding to $\E (X(T))=z$, as a feedback function of the time $t$, the wealth level $X$, and the market regime $i$, is
\begin{align}
\label{efficient}
\pi^*(t, X, i)&=-\left(\sigma(t, i)\sigma(t, i)'\right)^{-1}
\Bigg[\Big(b(t, i)+\frac{\sigma(t, i)\Lambda(t, i)}{P(t, i)}\Big)X\nonumber\\
&\qquad\qquad\qquad\qquad\qquad-(z+\lambda^*)
\frac{K(t, i)b(t, i)+\sigma(t, i)L(t, i)}{P(t, i)}\Bigg], %\nonumber\\
%&=-\left(\sigma(t, i)\sigma(t, i)'\right)^{-1}
%\Bigg[\Big(b(t, i)+\frac{\sigma(t, i)\Lambda(t, i)}{P(t, i)}\Big)\big(X-(z+\lambda^*)H(t, i)\big)
%-(z+\lambda^*)\sigma(t, i)\eta(t, i)\Bigg],
\end{align}
where
\begin{align*}
\lambda^*=\frac{z-Mz-K(0, i_0)x}{M}.
\end{align*}
The mean-variance frontier is
\begin{align}
\label{frontier}
\mathrm{Var} (X(T))=\frac{1-M}{M}\Big(\E (X(T))-\frac{K(0, i_0)}{1-M}x\Big)^2+\Big(P(0, i_0)-\frac{K(0, i_0)^2}{1-M}\Big)x^2,
\end{align}
with $0<M<1$.
\end{theorem}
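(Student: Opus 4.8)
The plan is to deduce Theorem~\ref{efficientth} from Theorem~\ref{unconstraint} and the Lagrange duality \eqref{duality}. By Theorem~\ref{unconstraint}, for each $\lambda\in\R$ the inner minimum $\min_{\pi\in\mathcal U}\hat J(\pi,\lambda)$ equals the right-hand side of \eqref{value}, a quadratic polynomial in $\lambda$. Writing $p=P(0,i_0)$, $k=K(0,i_0)$ and collecting terms, one checks that the coefficient of $\lambda^2$ in \eqref{value} is $-M$, so---once $M>0$ is known---the map $\lambda\mapsto\min_{\pi}\hat J(\pi,\lambda)$ is strictly concave, and its unique maximizer solves the first-order condition $-kx+(z+\lambda)(1-M)-\lambda=0$, i.e. $\lambda^*=\frac{z-Mz-kx}{M}$. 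Substituting $\lambda=\lambda^*$ (equivalently $z+\lambda^*=\frac{z-kx}{M}$) back into \eqref{value} and simplifying gives $\max_{\lambda}\min_{\pi}\hat J(\pi,\lambda)=px^2-z^2+\frac{(z-kx)^2}{M}$, and a short rearrangement identifies this with the right-hand side of \eqref{frontier} for $z=\E(X(T))$. The optimal portfolio \eqref{efficient} is then \eqref{portfolio} evaluated at $\lambda=\lambda^*$; since $\lambda^*$ maximizes $\min_\pi\hat J(\pi,\lambda)$, the envelope identity $\frac{d}{d\lambda}\min_\pi\hat J(\pi,\lambda)=-2(\E X^*(T)-z)$ forces $\E X^*(T)=z$, whence $\pi^*$ is admissible for \eqref{optm}, and by \eqref{duality} it is optimal with $\mathrm{Var}(X^*(T))$ equal to the frontier value.

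It remains to establish $0<M<1$, which is the real content. First, $M=\E\int_0^T O(t,\alpha_t)\,dt\ge 0$ since $(\sigma\sigma')^{-1}>0$ and $P>0$ make the integrand nonnegative. To see $M>0$: if $M=0$, then \eqref{value} is affine in $\lambda$ with slope $2(z-kx)$, so for any feasible target $z\neq kx$ (e.g. $z$ large, using feasibility for every $z\ge x\E e^{\int_0^T r(t,\alpha_t)dt}$) we would get $\max_\lambda\min_\pi\hat J(\pi,\lambda)=+\infty$, contradicting \eqref{duality} together with the fact that such a $z$ is attained by some $\pi$ with $X(T)\in L^2$, hence finite variance. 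To see $M<1$: in the unconstrained case Problem~\eqref{optm} is feasible for every $z\in\R$, so the identity $\mathrm{Var}(X(T))=px^2-z^2+\frac{(z-kx)^2}{M}$ must be nonnegative for all $z\in\R$; regarded as a quadratic in $z$ with leading coefficient $\frac1M-1$, this forces $\frac1M-1\ge 0$, i.e. $M\le 1$, and the case $M=1$ is excluded by choosing $x$ with $K(0,i_0)x\neq 0$ (possible because $K(0,i_0)=P(0,i_0)H(0,i_0)>0$, with $P>0$ by Theorem~\ref{Riccatisingular} and $H>0$ from the representation \eqref{Hrepre}), for which the right-hand side becomes an affine non-constant function of $z$ and so fails to be nonnegative for all $z$. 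Hence $0<M<1$, so $\lambda^*$ is well defined and \eqref{frontier} is meaningful.

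Two routine verifications support the above: that $\pi^*$ of \eqref{portfolio}/\eqref{efficient} lies in $\mathcal U$ (argued as in the admissibility lemma for Theorem~\ref{LQcontrol}, using that $P$ is bounded away from $0$, that $\Lambda$ and $L$ are BMO, and that the associated wealth process satisfies $\E[\sup_{t\in[0,T]}X(t)^2]<\infty$), and the elementary algebra reducing \eqref{value} at $\lambda=\lambda^*$ to \eqref{frontier}. The main obstacle is precisely the strict two-sided bound $0<M<1$: the inequality $M>0$ links non-degeneracy of the dual quadratic to the feasibility hypothesis \eqref{feasible}, while $M<1$ combines nonnegativity of variance over all feasible targets with the strict positivity $K(0,i_0)>0$; the remainder is bookkeeping on top of Theorem~\ref{unconstraint} and the duality \eqref{duality}.
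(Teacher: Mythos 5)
Your proposal is correct, and the dual-variable computation ($\lambda^*$ from the first-order condition, substitution of $z+\lambda^*=\frac{z-K(0,i_0)x}{M}$ into \eqref{value}, identification with \eqref{frontier}) coincides with the paper's. Where you genuinely diverge is the heart of the theorem, the bound $0<M<1$. The paper proves $M>0$ by It\^o's formula applied to $K(t,\alpha_t)X(t)$: if $M=0$ then $K b+\sigma L\equiv0$, so $\E X(T)=K(0,i_0)x$ for \emph{every} admissible portfolio, contradicting feasibility; and it proves $M<1$ by a second stochastic-calculus computation, It\^o on $P(t,\alpha_t)H(t,\alpha_t)^2$, which after completing squares yields the quantitative inequality $1-M\geq P(0,i_0)H(0,i_0)^2>0$. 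You instead argue by convex duality: if $M=0$ the dual function is affine and unbounded in $\lambda$ for a feasible $z\neq K(0,i_0)x$, contradicting finiteness of the primal value (note only weak duality is needed here, so this part is robust); and $M\leq1$ follows from nonnegativity of the minimal variance $P(0,i_0)x^2-z^2+\frac{(z-K(0,i_0)x)^2}{M}$ over all targets $z\in\R$, with $M=1$ excluded via $K(0,i_0)>0$ (which you justify exactly as the paper does, through $K=PH$, Theorem \ref{Riccatisingular} and \eqref{Hrepre}). This is a legitimate alternative, but two caveats are worth making explicit: (i) your $M<1$ step leans on the \emph{strong} duality \eqref{duality} holding for every $z\in\R$, whereas the paper's It\^o argument is self-contained given the BSDE solutions and is independent of the duality machinery; (ii) to exclude $M=1$ you ``choose $x$ with $K(0,i_0)x\neq0$'', but $x$ is the given initial wealth (possibly $0$), so you should state that $M=\E\int_0^T O(t,\alpha_t)dt$ does not depend on $x$, which licenses running the argument for an auxiliary problem with nonzero initial wealth. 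Finally, the paper's quantitative bound $1-M\geq P(0,i_0)H(0,i_0)^2$ is reused immediately afterwards (nonnegativity of the second term in \eqref{frontier} and the minimum-variance-point corollary); your softer argument does not deliver it directly, though it can be recovered by evaluating the frontier at $z=\frac{K(0,i_0)}{1-M}x$. Your envelope-identity remark verifying $\E X^*(T)=z$ at $\lambda^*$ is fine and, if anything, slightly more explicit than the paper's direct appeal to \eqref{duality}.
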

\begin{proof}
Obviously, $O\geq 0$, so is $M$. If $M=0$, then $K(t, \alpha_t)b(t, \alpha_t)+\sigma(t, \alpha_t)L(t, \alpha_t)=0$.
Applying It\^{o}'s lemma to $K(t, \alpha_t)X(t)$, we have for any $\pi\in\mathcal{U}$,
\begin{align*}
\E(X(T))
&=K(0, i_0)x+\E\int_0^T\bigg\{\pi(t)'\big(K(t, \alpha_t)b(t, \alpha_t)+\sigma(t, \alpha_t)L(t, \alpha_t)\big)\\
&\qquad\qquad\qquad\qquad + X(t)\big(K(t, \alpha_t)b(t, \alpha_t)+\sigma(t, \alpha_t)L(t, \alpha_t)\big)'
\left(\sigma(t, i)\sigma(t, i)'\right)^{-1}\\
&\qquad\qquad\qquad\qquad \times\Big(b(t, \alpha_t)+\frac{\sigma(t, \alpha_t)\Lambda(t, \alpha_t)}{P(t, \alpha_t)}\Big)\bigg\}dt\\
&=K(0, i_0)x.
\end{align*}
This is a contradiction. Thus $M>0$. Therefore, by Theorem \ref{unconstraint},
\begin{align*}
\min_{\pi\in\mathcal{U}}\hat{J}(\pi, \lambda)
&=-M\lambda^2+2(z-Mz-K(0, i_0)x)\lambda+z^2-Mz^2+P(0, \alpha_0)x^2-2K(0, \alpha_0)zx
\end{align*}
is a strictly concave quadratic function of $\lambda$ so that
\begin{align*}
%\label{duality2}
\max_{\lambda\in\mathbb{R}}\min_{\pi\in\mathcal{U}}\hat{J}(\pi, \lambda)
=\min_{\pi\in\mathcal{U}}\hat{J}(\pi, \lambda^{*}),
\end{align*}
where $\lambda^*$ is the unique maximizer
\begin{align*}
\lambda^*=\frac{z-Mz-K(0, i_0)x}{M}.
\end{align*}
This together with the duality relationship \eqref{duality}, by substituting $\lambda^*$ into \eqref{portfolio} and \eqref{value}, gives the optimal portfolio \eqref{efficient} and the optimal value
\begin{align*}
%\label{Mneq1}
\mathrm{Var}(X(T))&=P(0, i_0)x^2-2(z+\lambda^*)K(0, i_0)x+(z+\lambda^*)^2-(z+\lambda^*)^2M-(\lambda^*)^2 \nonumber\\
&=\frac{1-M}{M}z^2-\frac{2xK(0, i_0)}{M}z+\frac{K(0, i_0)^2}{M}x^2+P(0, i_0)x^2.
\end{align*}
After completing square, this leads to the mean-variance frontier \eqref{frontier}, provided $M\neq 1$.
\par
We now show $M<1$ indeed. Write
\[\tilde{\Sigma}_{t}=I_n-\sigma(t, \alpha_t)'\big(\sigma(t, \alpha_t)\sigma(t, \alpha_t)'\big)^{-1}\sigma(t, \alpha_t)\]
which is is positive semidefinite by definition.
Applying It\^{o}'s formula to $P(t, \alpha_t)H(t, \alpha_t)^{2}$, we have
\begin{align*}
&\quad\;1-P(0, i_0)H(0, i_0)^{2}\\
&=\E \int_0^T\Bigg\{P(t, \alpha_t)H(t, \alpha_t)^{2}b(t, \alpha_t)'
\big(\sigma(t, \alpha_t)\sigma(t, \alpha_t)'\big)^{-1}b(t, \alpha_t)\\
&\qquad\qquad\quad+2H(t, \alpha_t) L(t, \alpha_t)'\sigma(t, \alpha_t)'
\big(\sigma(t, \alpha_t)\sigma(t, \alpha_t)'\big)^{-1}b(t, \alpha_t)
+\frac{L(t, \alpha_t)'L(t, \alpha_t)}{P(t, \alpha_t)}\\
&\qquad\qquad\quad+\frac{H(t, \alpha_t)^2}{P(t, \alpha_t)}\Lambda(t, \alpha_t)'
\tilde{\Sigma}_{t}\Lambda(t, \alpha_t)-\frac{2H(t, \alpha_t)}{P(t, \alpha_t)}L(t, \alpha_t)'
\tilde{\Sigma}_{t}\Lambda(t, \alpha_t)\\
& \qquad\qquad\quad+H(t, \alpha_t)^{2}\sum_{j=1}^{\ell} q_{\alpha_t j}P(t, j)
-2H(t, \alpha_t)\sum_{j=1}^{\ell} q_{\alpha_t j}P(t, j)H(t, j)\\
& \qquad\qquad\quad+\sum_{j=1}^{\ell} q_{\alpha_t j}P(t, \alpha_t)H(t, \alpha_t)^{2}
\Bigg\}dt\\
&=\E \int_0^T\Bigg\{P(t, \alpha_t)H(t, \alpha_t)^{2}b(t, \alpha_t)'
\big(\sigma(t, \alpha_t)\sigma(t, \alpha_t)'\big)^{-1}b(t, \alpha_t)\\
&\qquad\qquad\quad+2H(t, \alpha_t) L(t, \alpha_t)'\sigma(t, \alpha_t)'
\big(\sigma(t, \alpha_t)\sigma(t, \alpha_t)'\big)^{-1}b(t, \alpha_t)\\
& \qquad\qquad\quad+\frac{L(t, \alpha_t)'\sigma(t, \alpha_t)'\big(\sigma(t, \alpha_t)\sigma(t, \alpha_t)'\big)^{-1}
\sigma(t, \alpha_t)L(t, \alpha_t)}{P(t, \alpha_t)} \\
&\qquad\qquad\quad+\frac{L(t, \alpha_t)'\tilde{\Sigma}_{t}L(t, \alpha_t)}{P(t, \alpha_t)}+\frac{H(t, \alpha_t)^2}{P(t, \alpha_t)}\Lambda(t, \alpha_t)'
\tilde{\Sigma}_{t}\Lambda(t, \alpha_t)\\
&\qquad\qquad\quad-\frac{2H(t, \alpha_t)}{P(t, \alpha_t)}L(t, \alpha_t)'
\tilde{\Sigma}_{t}\Lambda(t, \alpha_t)+\sum_{j=1}^{\ell} q_{\alpha_tj}P(t, j)\Big(H(t, \alpha_t)-H(t, j)\Big)^2
\Bigg\}dt\\
&=\E \int_0^T\Bigg\{O(t, \alpha_t)+\frac{\Big(L(t, \alpha_t)-H(t, \alpha_t)\Lambda(t, \alpha_t)\Big)'\tilde{\Sigma}_{t}
\Big(L(t, \alpha_t)-H(t, \alpha_t)\Lambda(t, \alpha_t)\Big)}{P(t, \alpha_t)}\\
&\qquad\qquad\quad+\sum_{j\neq\alpha_{t}} q_{\alpha_tj}P(t, j)\Big(H(t, \alpha_t)-H(t, j)\Big)^2
\Bigg\}dt.
\end{align*}
Recall that $M=\E \int_0^T O(t, \alpha_t)dt$, $\tilde{\Sigma}_{t}\geq 0 $, and $q_{ij}\geq 0$ for any $i\neq j$, so the above gives $1-P(0, i_0)H(0, i_0)^2\geq M$, or $1-M\geq P(0, i_0)H(0, i_0)^2$.
On the other hand we have $P(0, i_0)>0$ by Theorem \ref{Riccatisingular}, and
\begin{align*}
H(0, i_0)
=\widetilde\E^i\left[e(T, i)+\int_0^T e(s, i) \frac{1}{P(s, i)}\sum\limits_{j\neq i} q_{ij}P(s, j)H(s, j)ds\right]> 0,
\end{align*}
by \eqref{Hrepre}. So we conclude that $M<1$.
\end{proof}

\begin{remark}
From the above proof, we see that the second term in \eqref{frontier} is always non-negative.
It becomes zero, only when the Markov chain $\alpha_t$ has only one state and $m=n$, namely we are in a complete market. This in theory confirms the assertion in Remark \ref{incompleteness}. Otherwise, it is positive, meaning that the systemic risk is positive (namely, one cannot perfectly hedge the risk).
\end{remark}

\begin{corollary}
The minimum variance point on the mean-variance frontier is
\begin{align*}
\bigg(\sqrt{P(0, i_0)-\frac{K(0, i_0)^2}{1-M}}x, \ \frac{K(0, i_0)}{1-M}x\bigg).
\end{align*}
Moreover, the corresponding optimal feedback portfolio is
\begin{align*}
\pi^*_{\mathrm{min}}(t, X, i)=-\left(\sigma(t, i)\sigma(t, i)'\right)^{-1}
\Bigg[\Big(&b(t, i)+\frac{\sigma(t, i)\Lambda(t, i)}{P(t, i)}\Big)\big(X-zH(t, i)\big)\\
&-z\sigma(t, i)\eta(t, i)\Bigg].
\end{align*}

\end{corollary}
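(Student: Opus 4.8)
The plan is to read the first coordinate directly off the mean--variance frontier \eqref{frontier} of Theorem~\ref{efficientth}, and then specialise the efficient feedback portfolio \eqref{efficient} to the mean level that achieves the minimum variance.

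First I would note that, by Theorem~\ref{efficientth}, $0<M<1$, so $\frac{1-M}{M}>0$ and the right-hand side of \eqref{frontier}, viewed as a function of the mean $\E(X(T))$, is a strictly convex parabola. Its vertex is at $\E(X(T))=\frac{K(0,i_0)}{1-M}x$, where the variance attains its minimum value $\big(P(0,i_0)-\frac{K(0,i_0)^2}{1-M}\big)x^2$; this quantity is nonnegative by the remark following Theorem~\ref{efficientth}, so taking square roots yields the minimum-variance point $\big(\sqrt{P(0,i_0)-\frac{K(0,i_0)^2}{1-M}}\,x,\ \frac{K(0,i_0)}{1-M}x\big)$. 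Since in the unconstrained case Problem~\eqref{optm} is feasible for every $z\in\R$, this mean level is indeed attainable.

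Next I would specialise \eqref{efficient}. Writing $z=\frac{K(0,i_0)}{1-M}x$ for the minimum-variance mean and plugging it into $\lambda^*=\frac{z-Mz-K(0,i_0)x}{M}$ gives $\lambda^*=\frac{z(1-M)-K(0,i_0)x}{M}=0$, hence $z+\lambda^*=z$. Substituting into \eqref{efficient} therefore gives
\begin{align*}
\pi^*_{\mathrm{min}}(t,X,i)=-\big(\sigma(t,i)\sigma(t,i)'\big)^{-1}\Big[\big(b(t,i)+\tfrac{\sigma(t,i)\Lambda(t,i)}{P(t,i)}\big)X-z\,\tfrac{K(t,i)b(t,i)+\sigma(t,i)L(t,i)}{P(t,i)}\Big].
\end{align*}
Finally, using the definitions $K(i)=P(i)H(i)$ and $L(i)=P(i)\eta(i)+\frac{K(i)\Lambda(i)}{P(i)}$ one computes
\[
\frac{K(i)b(i)+\sigma(i)L(i)}{P(i)}=H(i)\Big(b(i)+\frac{\sigma(i)\Lambda(i)}{P(i)}\Big)+\sigma(i)\eta(i),
\]
and inserting this into the display above and collecting the two terms that carry the factor $b(i)+\frac{\sigma(i)\Lambda(i)}{P(i)}$ produces exactly the claimed form, with $X$ replaced by $X-zH(t,i)$ and the leftover term $-z\,\sigma(t,i)\eta(t,i)$.

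All the substantive analysis---well-posedness of \eqref{P} and \eqref{H}, optimality of \eqref{efficient}, and the bound $0<M<1$---is already available from Theorems~\ref{Riccatisingular}, \ref{Hexis}, \ref{unconstraint} and \ref{efficientth}, so no genuine difficulty remains. The only point that needs a moment's attention is observing that the frontier's vertex corresponds precisely to $\lambda^*=0$; this is what makes the otherwise two-parameter feedback law \eqref{efficient} collapse to the single clean expression in the statement.
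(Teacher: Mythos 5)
Your derivation is correct and is exactly the (implicit) argument the paper intends: the paper states this corollary as an immediate consequence of Theorem \ref{efficientth}, and your steps—reading the vertex of the parabola \eqref{frontier} using $0<M<1$, observing that $z=\frac{K(0,i_0)}{1-M}x$ forces $\lambda^*=0$, and rewriting $\frac{K b+\sigma L}{P}=H\big(b+\frac{\sigma\Lambda}{P}\big)+\sigma\eta$ via $K=PH$ and $L=P\eta+\frac{K\Lambda}{P}$—are precisely the computations needed. No gap to report.
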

\begin{remark}
Due to the minimum variance point, when the target $\E (X(T))$ is restricted to $\Big[\frac{K(0, i_0)}{1-M}x, \infty\Big)$ in \eqref{frontier}, one defines the efficient frontier for the mean-variance problem \eqref{optm}. And in this case, the
corresponding Lagrange multiplier $\lambda^*=\frac{z-Mz-K(0, i_0)}{M}\geq0$.
\end{remark}

\begin{theorem}[Mutual Fund Theorem]
Suppose an optimal portfolio $\pi^\star(\cdot)$ given by \eqref{efficient} corresponds to an expected return $z^{\star}>z_{\mathrm{min}}=\frac{K(0, i_0)}{1-M}x$. Then an admissible portfolio $\pi(\cdot)$ is efficient if and only if there exists a constant $\rho\geq0$ such that
\begin{align*}
%\label{mutual}
\pi(t)=(1-\rho)\pi_{\mathrm{min}}^\star(t)+\rho\pi^\star(t), \ t\in[0, T].
\end{align*}
Moreover, the corresponding expected return is $(1-\rho)z_{\mathrm{min}}+\rho z^{\star}$.
\end{theorem}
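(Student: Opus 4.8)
The plan is to exploit the affine dependence of the efficient portfolio on the target return $z$. First I would rewrite the efficient feedback control \eqref{efficient} in the form $\pi^*_z(t,X,i)=\mathcal A(t,i)\,X+\gamma(z)\,\mathcal N(t,i)$, where $\mathcal A(t,i)=-(\sigma\sigma')^{-1}\big(b+\tfrac{\sigma\Lambda}{P}\big)$ and $\mathcal N(t,i)=(\sigma\sigma')^{-1}\tfrac{Kb+\sigma L}{P}$ are independent of $z$, while $\gamma(z):=z+\lambda^*=\tfrac{z-K(0,i_0)x}{M}$ is affine and strictly increasing in $z$ (recall $M\in(0,1)$ from Theorem \ref{efficientth}). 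I would note that $\gamma(z_{\mathrm{min}})=z_{\mathrm{min}}$, i.e. $\lambda^*=0$ at $z=z_{\mathrm{min}}$, so the portfolio $\pi^\star_{\mathrm{min}}$ of the Corollary is exactly $\pi^*_z$ evaluated at $z=z_{\mathrm{min}}$.

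Next I would insert this feedback into the wealth equation \eqref{wealth}. Since $\pi^*_z$ is affine in $X$ with coefficients linear in $\gamma(z)$, the closed-loop equation is an affine linear SDE, and by uniqueness its solution splits as $X_z(t)=Y_0(t)+\gamma(z)\,Y_1(t)$, where $Y_0$ solves the homogeneous linear SDE with $Y_0(0)=x$ and $Y_1$ solves the linear SDE with zero initial value and inhomogeneous drift and diffusion built from $\mathcal N$ (well-posedness being as in the proof of Theorem \ref{unconstraint}); crucially neither $Y_0$ nor $Y_1$ depends on $z$. Hence the realized portfolio process is
\[\pi^*_z(t)=\mathcal A(t,\alpha_t)Y_0(t)+\gamma(z)\big(\mathcal A(t,\alpha_t)Y_1(t)+\mathcal N(t,\alpha_t)\big)=:\pi_0(t)+\gamma(z)\,\pi_1(t),\]
with $\pi_0,\pi_1\in L^2_{\mathcal F}(0,T;\mathbb R^m)$ independent of $z$. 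Thus the efficient portfolios are precisely the points $\pi_0+s\,\pi_1$ with $s=\gamma(z)\geq\gamma(z_{\mathrm{min}})$.

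For the ``only if'' part, given an efficient $\pi$ with target $z\geq z_{\mathrm{min}}$ I would write $\pi=\pi_0+\gamma(z)\pi_1$, $\pi^\star=\pi_0+\gamma(z^\star)\pi_1$, $\pi^\star_{\mathrm{min}}=\pi_0+\gamma(z_{\mathrm{min}})\pi_1$, and, using $\gamma(z^\star)\neq\gamma(z_{\mathrm{min}})$, solve for $\pi_0,\pi_1$ to get $\pi=(1-\rho)\pi^\star_{\mathrm{min}}+\rho\,\pi^\star$ with $\rho=\tfrac{\gamma(z)-\gamma(z_{\mathrm{min}})}{\gamma(z^\star)-\gamma(z_{\mathrm{min}})}=\tfrac{z-z_{\mathrm{min}}}{z^\star-z_{\mathrm{min}}}\geq0$. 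For the ``if'' part, given $\rho\geq0$ set $\bar z:=(1-\rho)z_{\mathrm{min}}+\rho z^\star\geq z_{\mathrm{min}}$; by affinity of $\gamma$, $(1-\rho)\pi^\star_{\mathrm{min}}+\rho\,\pi^\star=\pi_0+\gamma(\bar z)\pi_1=\pi^*_{\bar z}$, which is admissible (since $\Gamma=\mathbb R^m$ and $L^2_{\mathcal F}$ is a linear space) and is the optimal portfolio for target $\bar z\geq z_{\mathrm{min}}$, hence efficient, with expected return $\bar z=(1-\rho)z_{\mathrm{min}}+\rho z^\star$; this last identity also follows at once by taking $\E$ in $X_{\bar z}(T)$. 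The only delicate point — and the main (mild) obstacle — is the passage from the feedback form to the open-loop process: one must check that the $z$-dependence of the realized portfolio enters solely through the single scalar $\gamma(z)$, which is exactly what the homogeneity of $Y_0$ and the $z$-independence of $Y_1$ guarantee.
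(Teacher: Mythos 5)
Your proposal is correct and takes essentially the route the paper intends: the paper omits the proof, referring to Theorem 5.3 of Zhou and Yin \cite{ZY}, whose argument is exactly this two-fund separation based on the fact that the optimal feedback, and hence the realized optimal wealth and portfolio, depend affinely on the single scalar $z+\lambda^*=\frac{z-K(0,i_0)x}{M}$. The one step you leave implicit in the ``only if'' direction --- that an efficient portfolio for target $z$ must coincide with the realized feedback \eqref{efficient}, i.e.\ uniqueness of the optimizer --- is supplied by the strictly positive quadratic term $\E\int_0^T P(s,\alpha_s)\big(\pi(s)-\pi^*(s)\big)'\sigma(s,\alpha_s)\sigma(s,\alpha_s)'\big(\pi(s)-\pi^*(s)\big)ds$ in the verification identity of Theorem \ref{unconstraint}, so it is harmless.
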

The proof is similar to Theorem 5.3 in \cite{ZY}, we leave the details to the interested reader.
\subsection{Random regime switching market with no-shorting constraint}
Although our subsequent analysis can be applied to the case that not all the securities are allowed to short, i.e. $\Gamma=\mathbb{R}^{m_0}_+\times \mathbb{R}^{m-m_0}$ for some $m_0\leq m$. For notation simplicity, we simply consider the case that all the securities are not allowed to short, i.e. $\Gamma=\mathbb{R}^m_+$ in this subsection. In this case $\dualgamma=\mathbb{R}^m_-$.

In this subsection we assume
\begin{assumption} \label{assump4}
The interest process $r(\cdot)$ is deterministic, so that it is independent of the market regime process $\alpha$.
\end{assumption}
Under this assumption, $\psi(i)$ is a positive constant and $\xi(i)=0$ in Theorem \ref{theoremfeasible}, for every $i\in\cM$. So the feasible condition \eqref{feasible} is equivalent to
\begin{equation}
\label{feasiblecons}
\sum_{k=1}^m\E \int_0^Tb_k(t, \alpha_t)^+dt>0.
\end{equation}
Moreover, ESREs \eqref{P1} and \eqref{P2} become, respectively,
\begin{align}
\label{P11}
\begin{cases}
dP_1(i)=-\Big[(2rP_1(i)+H_1(P_1(i), \Lambda_1(i), i)+\sum\limits_{j=1}^{\ell}q_{ij}P_1(j)\Big]dt+\Lambda_1(i)'dW, \\
P_1(T, i)=1, \\
P_1(t, i)>0, \ \mbox{ for all $i\in\cM$;}
\end{cases}
\end{align}
and
\begin{align}
\label{P22}
\begin{cases}
dP_2(i)=-\Big[(2rP_2(i)+H_2(P_2(i), \Lambda_2(i), i)+\sum\limits_{j=1}^{\ell}q_{ij}P_2(j)\Big]dt+\Lambda_2(i)'dW, \\
P_2(T, i)=1, \\
P_2(t, i)>0, \ \mbox{ for all $i\in\cM$, }
\end{cases}
\end{align}
where
\begin{align*}
H_1(t, \omega, P, \Lambda, i)=\inf_{v\in\mathbb{R}_+^m}\big[v'P\sigma(t, i)\sigma(t, i)'v
+2v'(Pb(t, i)+\sigma(t, i)\Lambda)\big], \\
H_2(t, \omega, P, \Lambda, i)=\inf_{v\in\mathbb{R}_+^m}\big[v'P\sigma(t, i)\sigma(t, i)'v
-2v'(Pb(t, i)+\sigma(t, i)\Lambda)\big].
\end{align*}
Again by Theorem \ref{Riccatisingular}, we know \eqref{P11} and \eqref{P22} have solutions, which are denoted by $(P_1(i), \ \Lambda_1(i))_{i=1}^{\ell}$ and $(P_2(i), \ \Lambda_2(i))_{i=1}^{\ell}$, respectively, from now on.

\begin{lemma}
\label{Hcomparision}
Under Assumption \ref{assump4}, we have
\[
P_1(0, i_0)e^{-2\int_0^Tr(s)ds}\leq1, \ P_2(0, i_0)e^{-2\int_0^Tr(s)ds}<1.
\]
\end{lemma}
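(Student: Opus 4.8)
The plan is to obtain explicit integral representations of $P_1(0,i_0)$ and $P_2(0,i_0)$ and then read off the sign of the correction term. Set $\Phi(t)=e^{2\int_0^t r(s)\,ds}$, which is deterministic by Assumption \ref{assump4}. Applying It\^{o}'s formula for the Markov chain (as in the proof of Theorem \ref{LQcontrol}, so that the term $\sum_j q_{\alpha_t j}P_k(t,j)$ in the drift of \eqref{P11}/\eqref{P22} is cancelled by the compensator of the pure-jump part) to $\Phi(t)P_k(t,\alpha_t)$, $k=1,2$, I expect
\begin{align*}
d\big(\Phi(t)P_k(t,\alpha_t)\big)=-\Phi(t)H_k\big(t,P_k(t,\alpha_t),\Lambda_k(t,\alpha_t),\alpha_t\big)\,dt+\Phi(t)\Lambda_k(t,\alpha_t)'\,dW(t)+\Phi(t)\,d\mathcal M^k_t,
\end{align*}
where $\mathcal M^k$ is the compensated jump martingale of $t\mapsto P_k(t,\alpha_t)$. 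Since $c\le P_k(i)\le C$ (Theorem \ref{Riccatisingular}), $\Lambda_k(i)\in\BMO$, and the jumps of $P_k(t,\alpha_t)$ are bounded, both stochastic integrals above are true martingales. Integrating over $[0,T]$, using $P_k(T,\alpha_T)=1$ and taking expectations yields
\begin{align*}
P_k(0,i_0)=e^{2\int_0^T r(s)\,ds}+\E\int_0^T\Phi(t)\,H_k\big(t,P_k(t,\alpha_t),\Lambda_k(t,\alpha_t),\alpha_t\big)\,dt,\qquad k=1,2.
\end{align*}

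Because $0\in\Gamma=\mathbb R^m_+$ we have $H_1\le 0$ and $H_2\le 0$, while $\Phi>0$; hence the representation immediately gives $P_k(0,i_0)e^{-2\int_0^T r(s)\,ds}\le 1$ for $k=1,2$, which in particular settles the assertion about $P_1$ and the non-strict bound for $P_2$. (Alternatively this bound follows from Lemma \ref{comparison}: the generator of \eqref{P11}/\eqref{P22} is dominated, thanks to $H_k\le 0$, by the linear generator $2rP(i)+\sum_j q_{ij}P(j)$, whose BSDE is solved by the deterministic pair $\bar P(i)=e^{2\int_t^T r(s)\,ds}$, $\bar\Lambda(i)=0$.)

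To upgrade the bound for $P_2$ to a strict one, it suffices, by the representation, to show that $H_2\big(t,P_2(t,\alpha_t),\Lambda_2(t,\alpha_t),\alpha_t\big)<0$ on a set of positive $dt\otimes d\mathbb P$-measure. Assume, for contradiction, that it vanishes a.e. A direct computation shows that, for $P>0$ and $\sigma(t,i)\sigma(t,i)'\ge\delta I_m$, one has $H_2(t,P,\Lambda,i)=\inf_{v\in\mathbb R^m_+}[v'P\sigma\sigma'v-2v'(Pb+\sigma\Lambda)]=0$ if and only if $Pb(t,i)+\sigma(t,i)\Lambda\in\mathbb R^m_-$ (otherwise, choosing $v=\varepsilon e_k$ with $(Pb+\sigma\Lambda)_k>0$ makes the bracket negative for small $\varepsilon>0$). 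Under the contradiction hypothesis the first display reduces to $d\big(\Phi(t)P_2(t,\alpha_t)\big)=\Phi(t)\Lambda_2(t,\alpha_t)'\,dW(t)+\Phi(t)\,d\mathcal M^2_t$, so $\Phi(t)P_2(t,\alpha_t)$ is a martingale equal to its deterministic terminal value $e^{2\int_0^T r(s)\,ds}$, hence constant; its continuous martingale part $\int_0^\cdot\Phi(s)\Lambda_2(s,\alpha_s)'\,dW(s)$ therefore vanishes identically, which forces $\Lambda_2(t,\alpha_t)=0$ a.e. Consequently $P_2(t,\alpha_t)b(t,\alpha_t)=P_2(t,\alpha_t)b(t,\alpha_t)+\sigma(t,\alpha_t)\Lambda_2(t,\alpha_t)\in\mathbb R^m_-$ with $P_2(t,\alpha_t)\ge c>0$, so $b(t,\alpha_t)\in\mathbb R^m_-$ a.e., that is $b_k(t,\alpha_t)^+=0$ a.e. for every $k$, whence $\sum_{k=1}^m\E\int_0^T b_k(t,\alpha_t)^+\,dt=0$. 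This contradicts the standing feasibility condition \eqref{feasiblecons}, so $P_2(0,i_0)e^{-2\int_0^T r(s)\,ds}<1$.

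The main obstacle is the rigorous handling of the Markov-chain It\^{o} formula for $\Phi(t)P_k(t,\alpha_t)$ and, above all, the verification that the two stochastic integrals appearing there are genuine martingales rather than only local ones; this is precisely where the uniform bounds on $P_k$ from Theorem \ref{Riccatisingular} and the $\BMO$ property of $\Lambda_k$ are used. A secondary delicate point is the step, in the last paragraph, deducing $\Lambda_2\equiv 0$ from ``$H_2\equiv 0$ along the chain'' via the vanishing of a continuous martingale part, which is what ties the strict inequality to the feasibility of Problem \eqref{optm}.
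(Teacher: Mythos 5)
Your proof is correct and follows essentially the same route as the paper: the It\^{o} representation $P_k(0,i_0)e^{-2\int_0^T r(s)ds}=1+\E\int_0^T e^{-2\int_t^T r(s)ds}H_k\,dt$ together with $H_k\le 0$ yields the non-strict bounds (the paper obtains these via Lemma \ref{comparison}, which you also note as an alternative), and the strict bound for $P_2$ is obtained by the same contradiction with the feasibility condition \eqref{feasiblecons}. The only divergence is minor: where the paper deduces $\Lambda_2\equiv 0$ by identifying $\big(e^{2\int_t^T r(s)ds},0\big)$ as the unique solution of \eqref{P22}, you extract it from the vanishing quadratic variation of the constant martingale $e^{2\int_0^t r(s)ds}P_2(t,\alpha_t)$, a slightly more direct justification of the same step.
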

\begin{proof}
Consider the following BSDE with Lipschitz coefficients:
\begin{align*}
\begin{cases}
dP(i)=-\Big[(2rP(i)+\sum\limits_{j=1}^{\ell}q_{ij}P(j)\Big]dt+\Lambda(i)'dW, \\
P(T, i)=1, \ \mbox{ for all $i\in\cM$.}
\end{cases}
\end{align*}
It has a unique solution $\Big(e^{2\int_t^Tr(s)ds}, 0\Big)_{i=1}^{\ell}$. Notice that $H_1(t, P, \Lambda, i)\leq0$,
$H_2(t, P, \Lambda, i)\leq0$, we have $P_1(0, i_0)\leq e^{2\int_0^Tr(s)ds}$ and $P_2(0, i_0)\leq e^{2\int_0^Tr(s)ds}$ by Lemma \ref{comparison}.

Applying It\^{o}'s formula to $P_2(t, \alpha_t)e^{-2\int_t^Tr(s)ds}$, we get
\begin{align*}
1-P_2(0, i_0)e^{-2\int_0^Tr(s)ds}=-\E\int_0^Te^{-2\int_t^Tr(s)ds}H_2(t, P_2, \Lambda_2, \alpha_t)dt.
\end{align*}
Now suppose $P_2(0, i_0)=e^{2\int_0^Tr(s)ds}$. Then $H_2(t, P_2, \Lambda_2, \alpha_t)=0$ and $P_2(t, \alpha_t)=e^{2\int_t^Tr(s)ds}$ for $t\in[0, T]$.
Thus $\Big(e^{2\int_t^Tr(s)ds}, 0\Big)_{i=1}^{\ell}$ is the unique solution of \eqref{P22}.
Consequently, $H_2(t, P_2, 0, \alpha_t)=0$ for $t\in[0, T]$. It follows
\begin{align*}
0=H_2(t, P_2, 0, \alpha_t)&=P_2\inf_{v\in\mathbb{R}_+^m}\big[v'\sigma(t, \alpha_t)\sigma(t, \alpha_t)'v
-2v'b(t, \alpha_t)\big] \leq P_2\inf_{v\in\mathbb{R}_+^m}\big[Cv'v
-2v'b(t, \alpha_t)\big]
\end{align*}
where $C>0$. By choosing $v_{t}=\varepsilon(b_1(t, \alpha_t)^+, \ldots, b_m(t, \alpha_t)^+)\in\mathbb{R}_+^m $ with $\varepsilon>0$ in above, we get
\begin{align*}
0=\E\int_{0}^{T}H_2(t, P_2, 0, \alpha_t)
&\leq (C\varepsilon^{2}-2\varepsilon)\E\int_{0}^{T} e^{2\int_t^Tr(s)ds}\sum_{k=1}^{m}(b_k(t, \alpha_t)^+)^{2}dt.
\end{align*}
Noticing \eqref{feasiblecons}, we see the right hand side is negative for sufficiently small $\varepsilon>0$, leading to a contraction.
Therefore $P_2(0, i_0)e^{-2\int_0^Tr(s)ds}<1$.
\end{proof}

In the present setting, we have
\begin{align*}
\hat v_1(t, \omega, P, \Lambda, i)=\argmin_{v\in\mathbb{R}_+^m}\big[v'(P\sigma(t, i)\sigma(t, i)')v
+2v'(Pb(t, i)+\sigma(t, i)\Lambda)\big], \\
\hat v_2(t, \omega, P, \Lambda, i)=\argmin_{v\in\mathbb{R}_+^m}\big[v'(P\sigma(t, i)\sigma(t, i)')v
-2v'(Pb(t, i)+\sigma(t, i)\Lambda)\big].
\end{align*}

As for the relaxed problem \eqref{optmun}, we have the following analog result of Theorem \ref{LQcontrol}.
\begin{theorem}
Under Assumption \ref{assump4}, the relaxed problem \eqref{optmun} has an optimal feedback control
\begin{align*}
%\label{portfoliocons}
\pi^*(t, X, i)&=\hat v_1(t, P_1, \Lambda_1, i)\bigg(X-(\lambda+z)e^{-\int_t^Tr(s)ds}\bigg)^+\nonumber\\
&\qquad+\hat v_2(t, P_2, \Lambda_2, i)\bigg(X-(\lambda+z)e^{-\int_t^Tr(s)ds}\bigg)^-.
\end{align*}
Moreover, the corresponding optimal value is
\begin{align*}
%\label{valuecons}
\min_{\pi\in\mathcal{U}}\hat{J}(\pi, \lambda)
=P_1(0, i_0)(x-(\lambda+z)e^{-\int_0^Tr(s)ds})_+^2+P_2(0, i_0)(x-(\lambda+z)e^{-\int_0^Tr(s)ds})_-^2
-\lambda^2.
\end{align*}
\end{theorem}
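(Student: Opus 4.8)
The plan is to reduce the relaxed problem \eqref{optmun} to the general constrained stochastic LQ problem \eqref{LQ} already solved in Theorem \ref{LQcontrol}, by means of a deterministic shift of the state.

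First I would introduce the shifted process $Y(t):=X(t)-(\lambda+z)e^{-\int_t^T r(s)\,ds}$ for $t\in[0,T]$. Under Assumption \ref{assump4} the map $t\mapsto e^{-\int_t^T r(s)ds}$ is absolutely continuous with derivative $r(t)e^{-\int_t^T r(s)ds}$, so differentiating and inserting \eqref{wealth} gives
\begin{align*}
dY(t)=\big[r(t)Y(t)+\pi(t)'b(t,\alpha_t)\big]dt+\pi(t)'\sigma(t,\alpha_t)\,dW(t),\qquad Y(0)=x-(\lambda+z)e^{-\int_0^T r(s)ds},
\end{align*}
together with $Y(T)=X(T)-(\lambda+z)$, hence $\hat J(\pi,\lambda)=\mathbb{E}[Y(T)^2]-\lambda^2$. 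Thus, up to the additive constant $-\lambda^2$, problem \eqref{optmun} is precisely Problem \eqref{LQ} with state equation \eqref{state} in which $A=r$, $B=b$, $C=0$, $D=\sigma'$, $Q=0$, $R=0$, $G=1$, cone $\Gamma=\mathbb{R}^m_+$, and initial data $Y(0)$. Moreover the admissible class used here coincides with $\mathcal U$ of Section 2, because the linear SDE for $Y$ has a unique strong solution for every $\pi\in L^2_{\mathcal F}(0,T;\mathbb R^m)$ and $Y$ differs from $X$ by a bounded deterministic process.

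Next I would verify that this instance falls into the singular case: Assumption \ref{assump1} holds since $r$, $b$, $\sigma$ are bounded; $Q=0\geq0$, $R=0\geq0$, $G=1\geq\delta$, and $D'D=\sigma\sigma'\geq\delta I_m$ for a suitable $\delta>0$ by the standing assumption of Section 5. Hence Theorem \ref{Riccatisingular} applies, and the corresponding ESREs \eqref{P1}, \eqref{P2} — which with these coefficients are exactly \eqref{P11}, \eqref{P22}, since $H_1,H_2$ then collapse to the displayed infima over $\mathbb R^m_+$ — admit unique solutions $(P_1,\Lambda_1)$, $(P_2,\Lambda_2)$ with $P_1(i),P_2(i)\geq c>0$. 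Theorem \ref{LQcontrol} then yields directly that the $Y$-problem has optimal feedback control $\hat v_1(t,P_1,\Lambda_1,i)Y^++\hat v_2(t,P_2,\Lambda_2,i)Y^-$ and optimal value $P_1(0,i_0)(Y(0)^+)^2+P_2(0,i_0)(Y(0)^-)^2$.

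Finally I would translate back: substituting $Y=X-(\lambda+z)e^{-\int_t^T r(s)ds}$ into the feedback law produces the asserted $\pi^*(t,X,i)$, and subtracting $\lambda^2$ and inserting $Y(0)=x-(\lambda+z)e^{-\int_0^T r(s)ds}$ gives the stated formula for $\min_{\pi\in\mathcal U}\hat J(\pi,\lambda)$. There is no genuine analytic difficulty here; the only points requiring care are checking that the shift turns \eqref{wealth} into a state equation of the precise form \eqref{state} with the claimed coefficients and that the two admissible control sets genuinely coincide, after which the result is an immediate consequence of Theorems \ref{Riccatisingular} and \ref{LQcontrol}.
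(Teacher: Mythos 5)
Your proposal is correct and follows exactly the route the paper intends (the paper states this result as the ``analog of Theorem \ref{LQcontrol}'' without writing out the proof): shift the wealth by the deterministic discounted target, observe that under Assumption \ref{assump4} the shifted process satisfies \eqref{state} with $A=r$, $B=b$, $C=0$, $D=\sigma'$, $Q=0$, $R=0$, $G=1$, so the singular-case Theorems \ref{Riccatisingular} and \ref{LQcontrol} apply with ESREs \eqref{P11}--\eqref{P22}, and then translate the feedback law and value back, subtracting $\lambda^2$. Your verification of the coefficient identification, the admissibility equivalence, and the singular-case hypotheses is exactly what is needed; no gaps.
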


Next we will find the best Lagrange multiplier $\lambda^*$. Clearly
\begin{align*}
\min_{\pi\in\mathcal{U}}\hat{J}(\pi, \lambda)
&=\begin{cases}
f(\lambda), &\quad \mbox{if} \ \lambda\leq xe^{\int_0^Tr(s)ds}-z;\\
g(\lambda), &\quad \mbox{if} \ \lambda\geq xe^{\int_0^Tr(s)ds}-z,
\end{cases}
\end{align*}
where
\begin{align*}
f(\lambda)&=(P_1(0, i_0)e^{-2\int_0^Tr(s)ds}-1)\lambda^2+2 P_1(0, i_0)e^{-\int_0^Tr(s)ds}(ze^{-\int_0^Tr(s)ds}-x)\lambda\\
&\qquad+P_1(0, i_0)(x-ze^{-\int_0^Tr(s)ds})^2, \\
h(\lambda)&=(P_2(0, i_0)e^{-2\int_0^Tr(s)ds}-1)\lambda^2+2 P_2(0, i_0)e^{-\int_0^Tr(s)ds}(ze^{-\int_0^Tr(s)ds}-x)\lambda\\
&\qquad+P_2(0, i_0)(x-ze^{-\int_0^Tr(s)ds})^2.
\end{align*}
Using $z\geq xe^{\int_0^Tr(s)ds}$, $P_1(0, i_0)e^{-2\int_0^Tr(s)ds}\leq1$ and $P_2(0, i_0)e^{-2\int_0^Tr(s)ds}<1$ by Lemma \ref{Hcomparision}, one can easily deduce
\begin{align*}
\max_{\lambda\leq xe^{\int_0^Tr(s)ds}-z}f(\lambda)&=f(xe^{\int_0^Tr(s)ds}-z), \\
\max_{\lambda\geq xe^{\int_0^Tr(s)ds}-z}h(\lambda)&=h(\lambda^*)
=\frac{P_2(0, i_0)e^{-2\int_0^Tr(s)ds}}{1-P_2(0, i_0)e^{-2\int_0^Tr(s)ds}}\Big(z-xe^{\int_0^Tr(s)ds}\Big)^2,
\end{align*}
where
\[\lambda^*=\frac{P_2(0, i_0)e^{-\int_0^Tr(s)ds}(ze^{-\int_0^Tr(s)ds}-x)}{1-P_2(0, i_0)e^{-2\int_0^Tr(s)ds}}
\geq xe^{\int_0^Tr(s)ds}-z.\]
Furthermore,
\[h(\lambda^*)\geq h(xe^{\int_0^Tr(s)ds}-z)=f(xe^{\int_0^Tr(s)ds}-z).\]
Thus $\lambda^*$ attains the maximum of $\min\limits_{\pi\in\mathcal{U}}\hat{J}(\pi, \lambda)$.

The above analysis boils down to the following theorem.
\begin{theorem}
Suppose Assumption \ref{assump4} holds. The optimal portfolio of Problem \eqref{optm} corresponding to $\E (X(T))=z$, as a feedback function of the time $t$, the wealth level $X$, and the market regime $i$, is
\begin{align*}
%\label{efficientcons}
\pi^*(t, X, i)=-\hat v_2(t, P_2, \Lambda_2, i)\Big(X-(\lambda^*+z)e^{-\int_t^Tr(s)ds}\Big).
\end{align*}
The efficient frontier is
\begin{align*}
%\label{frontiercons}
\mathrm{Var}(X(T))=\frac{P_2(0, i_0)e^{-2\int_0^Tr(s)ds}}{1-P_2(0, i_0)e^{-2\int_0^Tr(s)ds}}\Big(\E(X(T))-xe^{\int_0^Tr(s)ds}\Big)^2,
\end{align*}
where $\E(X(T))\geq xe^{\int_0^Tr(s)ds}.$
\end{theorem}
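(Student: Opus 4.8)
The plan is to assemble the efficient frontier and the efficient portfolio from three ingredients already in hand: the Lagrange duality identity \eqref{duality}, the explicit solution of the relaxed problem \eqref{optmun} recorded in the preceding theorem, and the sharp estimates of Lemma \ref{Hcomparision}. Throughout, the feasibility condition \eqref{feasiblecons} is in force, so Problem \eqref{optm} is feasible for every $z\ge xe^{\int_0^Tr(s)ds}$ and \eqref{duality} is available.

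First I would recall from the preceding theorem that, for each fixed $\lambda\in\R$,
\[
\min_{\pi\in\mathcal U}\hat J(\pi,\lambda)=P_1(0,i_0)\big(x-(\lambda+z)e^{-\int_0^Tr(s)ds}\big)_+^2+P_2(0,i_0)\big(x-(\lambda+z)e^{-\int_0^Tr(s)ds}\big)_-^2-\lambda^2,
\]
with associated feedback control $\hat v_1(t,P_1,\Lambda_1,i)Y_\lambda^+ +\hat v_2(t,P_2,\Lambda_2,i)Y_\lambda^-$ for $Y_\lambda(t):=X-(\lambda+z)e^{-\int_t^Tr(s)ds}$. I then split this value by the sign of $x-(\lambda+z)e^{-\int_0^Tr(s)ds}$: it equals $f(\lambda)$ for $\lambda\le xe^{\int_0^Tr(s)ds}-z$ and $h(\lambda)$ otherwise, where $f,h$ are the two quadratics displayed just above the theorem. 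By Lemma \ref{Hcomparision} the leading coefficient of $f$ is $P_1(0,i_0)e^{-2\int_0^Tr(s)ds}-1\le0$ and that of $h$ is $P_2(0,i_0)e^{-2\int_0^Tr(s)ds}-1<0$, so $f$ is concave and $h$ strictly concave. Using $z\ge xe^{\int_0^Tr(s)ds}$, a vertex computation shows $f$ is nondecreasing on $(-\infty,xe^{\int_0^Tr(s)ds}-z]$, hence maximized at the right endpoint, while $h$ is maximized at the interior point $\lambda^*$ stated in the text, which satisfies $\lambda^*\ge xe^{\int_0^Tr(s)ds}-z$. Since $f$ and $h$ coincide at the junction $\lambda=xe^{\int_0^Tr(s)ds}-z$ and $h(\lambda^*)\ge h(xe^{\int_0^Tr(s)ds}-z)$, the global maximizer over $\R$ is $\lambda^*$; substituting $\lambda^*$ into $h$ and simplifying gives
\[
\max_{\lambda\in\R}\min_{\pi\in\mathcal U}\hat J(\pi,\lambda)=\frac{P_2(0,i_0)e^{-2\int_0^Tr(s)ds}}{1-P_2(0,i_0)e^{-2\int_0^Tr(s)ds}}\big(z-xe^{\int_0^Tr(s)ds}\big)^2 .
\]
By \eqref{duality} this equals $\mathrm{Var}(X(T))$ at the target $\E X(T)=z$, which is the claimed efficient frontier (the denominator is nonzero and the coefficient positive thanks to the strict inequality in Lemma \ref{Hcomparision}).

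For the efficient portfolio I would substitute $\lambda=\lambda^*$ into the feedback control above and argue the $\hat v_1$-branch never activates. Put $Y(t):=X(t)-(\lambda^*+z)e^{-\int_t^Tr(s)ds}$ for the wealth $X$ driven by this control. Since $\lambda^*+z\ge xe^{\int_0^Tr(s)ds}$, we have $Y(0)=x-(\lambda^*+z)e^{-\int_0^Tr(s)ds}\le0$; and since $\frac{d}{dt}e^{-\int_t^Tr(s)ds}=r(t)e^{-\int_t^Tr(s)ds}$, It\^{o}'s formula yields $dY=[r(t)Y+\pi'b]dt+\pi'\sigma\,dW$ with $\pi=\hat v_1 Y^+ +\hat v_2 Y^-$. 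On the set $\{Y\le0\}$ this reduces to the linear SDE $dY=Y\big[(r-\hat v_2'b)\,dt-\hat v_2'\sigma\,dW\big]$, whose solution retains the sign of $Y(0)$; by uniqueness of the solution of \eqref{wealth} under this control, $Y(t)\le0$ a.s. for all $t$, so $Y^+\equiv0$ and $\pi^*$ collapses to $-\hat v_2(t,P_2,\Lambda_2,i)Y(t)$, the asserted form (admissibility $\hat v_2\in\R^m_+$ and square integrability are inherited from the preceding theorem and the lemma following Theorem \ref{LQcontrol}).

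The main obstacle is the piecewise maximization in $\lambda$: one must check carefully, using \emph{both} inequalities of Lemma \ref{Hcomparision} together with $z\ge xe^{\int_0^Tr(s)ds}$, that the left branch $f$ attains its constrained maximum at the endpoint while the right branch $h$ has an interior maximizer $\lambda^*$ lying in its admissible half-line, and that the branches match so that $\lambda^*$ is globally optimal. The secondary delicate point is the sign-persistence of the shifted wealth process $Y$, which is precisely what permits writing the optimal portfolio with the single term $-\hat v_2$; here the strict bound $P_2(0,i_0)e^{-2\int_0^Tr(s)ds}<1$ (a consequence of the feasibility assumption \eqref{feasiblecons}) is what makes $\lambda^*$ well defined and the frontier nondegenerate.
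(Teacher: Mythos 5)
Your proposal is correct and follows essentially the same route as the paper: the relaxed-problem value from the preceding theorem, the piecewise maximization of the two quadratics $f$ and $h$ over $\lambda$ using Lemma \ref{Hcomparision} together with $z\ge xe^{\int_0^T r(s)ds}$, and the Lagrange duality \eqref{duality} to identify $h(\lambda^*)$ with the minimal variance. Your only addition is the explicit sign-persistence argument showing $Y(t)\le 0$ so the $\hat v_1$-branch never activates, a step the paper leaves implicit when it writes the optimal feedback as the single term $-\hat v_2(t,P_2,\Lambda_2,i)\big(X-(\lambda^*+z)e^{-\int_t^T r(s)ds}\big)$.
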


\begin{remark}
In this case, we have assumed that the interest rate $r$ is a deterministic function which is independent of $\omega$ and the Markov chain, thus the risk adjust process $H(t, i)$ of $\eqref{H}$ must be of the form $(H(t, i), \ \eta(t, i))=(e^{-\int_t^Tr(s)ds}, \ 0)$ for all $i\in\cM$. Then from the proof of Theorem \ref{efficientth}, we know $1-\frac{K(0, i_0)^2}{P(0, i_0)}=M$, which leads the efficient frontier above a harf line, even though the number of the stock may less than the dimension of the Brownian motion and the appearance of the Markov chain. Economically speaking, one can put all the money into the risk-free asset to reduce the risk to 0.
\end{remark}
\section{Concluding remarks}
In this paper, we developed a constrained stochastic LQ problem with regime switching and random coefficients. And we succeeded in obtaining the optimal state feedback control and optimal cost value via two systems of highly nonlinear BSDEs which are introduced in this paper for the first time. The solvability of these two systems of equations is interesting in its own from the point view of BSDE theory. At last, we solved two continuous-time mean-variance portfolio selection problems with regime switching and random coefficients with/without trading constraint by a system of linear BSDEs with unbounded coefficients.
 Extensions in other directions can be interesting as well. For instance, (1) The mean-variance portfolio selection problem with no-shorting constraint if the interest rate $r$ is a stochastic process. (2) The constrained LQ control problem with regime switching in infinite time horizon with deterministic or random coefficients. (3) The solvability of matrix-valued system of ESREs.

%%%%%%%%%%%%%%%%%%%%%%%%%%%%%%%%%%%%%%%%%%%%%%
%% Example with single Appendix:            %%
%%%%%%%%%%%%%%%%%%%%%%%%%%%%%%%%%%%%%%%%%%%%%%
\begin{appendix}
\section*{Proof of Lemma 3.4}\label{appn} %% if no title is needed, leave empty \section*{}.
For $t\in[0, T]$ and every $i\in\cM$, set
\[\delta Y(t, i)=Y(t, i)-\overline Y(t, i), \ \delta Z(t, i)=Z(t, i)-\overline Z(t, i).\]
Applying It\^{o}'s formula to $(\delta Y(t, i)^+)^2$, we have
\begin{align*}
\mathbb E(\delta Y(t, i)^+)^2&=\mathbb E\int_t^T 2\delta Y(s, i)^+\Big[f(s, Y(s, i), Y(s, -i), Z(s, i), i)\\
&\qquad\qquad\qquad\qquad\qquad-\overline f(s, \overline Y(s, i), \overline Y(s, -i), \overline Z(s, i), i)\Big]ds\\
& \qquad-\mathbb E\int_t^T I_{\delta Y(s, i)\geq 0}|\delta Z(s, i)|^2ds\\
&\leq \mathbb{E}\int_t^T 2c\delta Y(s, i)^+(|\delta Y(s, i)|+\sum_{j\neq i}\delta Y(s, j)^++\delta Z(s, i))ds\\
& \qquad-\mathbb E\int_t^T I_{\delta Y(s, i)\geq 0}|\delta Z(s, i)|^2ds\\
&\leq c\mathbb{E}\int_t^T \sum_{i=1}^{\ell}(\delta Y(s, i)^+)^2ds,
\end{align*}
by the AM-GM inequality. Thus
\[\sum_{i=1}^{\ell}\mathbb E(\delta Y(t, i)^+)^2 \leq c\int_t^T \sum_{i=1}^{\ell}\mathbb{E}(\delta Y(s, i)^+)^2ds.\]
It then follows from Gronwall's inequality that $\sum_{i=1}^{\ell}\mathbb E(\delta Y(t, i)^+)^2=0$, thus $Y(t, i)\leq\overline Y(t, i)$ for a.e. $t\in[0, T]$ and all $i\in\cM$.
\end{appendix}
%%%%%%%%%%%%%%%%%%%%%%%%%%%%%%%%%%%%%%%%%%%%%%
%% Example with multiple Appendixes:        %%
%%%%%%%%%%%%%%%%%%%%%%%%%%%%%%%%%%%%%%%%%%%%%%
%\begin{appendix}
%\section{Proof of Lemma 3.1}\label{appA}
%For $t\in[0, T]$ and every $i$, set
%\[\delta Y(t, i)=Y(t, i)-\overline Y(t, i), \ \delta Z(t, i)=Z(t, i)-\overline Z(t, i).\]
%Applying It\^{o}'s formula to $(\delta Y(t, i)^+)^2$, we have
%\begin{align*}
%\mathbb E(\delta Y(t, i)^+)^2&=\mathbb E\int_t^T 2\delta Y(s, i)^+[f(s, Y(s, i), Y(s, -i), Z(s, i), i)-\overline f(s, \overline Y(s, i), \overline Y(s, -i), \overline Z(s, i), i)]ds\\
%& \qquad-\mathbb E\int_t^T I_{\delta Y(s, i)\geq 0}|\delta Z(s, i)|^2ds\\
%&\leq \mathbb{E}\int_t^T 2C\delta Y(s, i)^+(|\delta Y(s, i)|+\sum_{j\neq i}\delta Y(s, j)^++\delta Z(s, i))ds\\
%& \qquad-\mathbb E\int_t^T I_{\delta Y(s, i)\geq 0}|\delta Z(s, i)|^2ds\\
%&\leq C\mathbb{E}\int_t^T \sum_{i=1}^{\ell}(\delta Y(s, i)^+)^2ds,
%\end{align*}
%by the AM-GM inequality. Thus
%\[\sum_{i=1}^{\ell}\mathbb E(\delta Y(t, i)^+)^2 \leq C\int_t^T \sum_{i=1}^{\ell}\mathbb{E}(\delta Y(s, i)^+)^2ds.\]
%It then follows from Gronwall's inequality that $\sum_{i=1}^{\ell}\mathbb E(\delta Y(t, i)^+)^2=0$, thus $Y(t, i)\leq\overline Y(t, i)$ for a.e. $t\in[0, T]$ and all $i$.
%\end{appendix}

%%%%%%%%%%%%%%%%%%%%%%%%%%%%%%%%%%%%%%%%%%%%%%
%% Support information (funding), if any,   %%
%% should be provided in the                %%
%% Acknowledgements section.                %%
%%%%%%%%%%%%%%%%%%%%%%%%%%%%%%%%%%%%%%%%%%%%%%
\section*{Acknowledgements}
The authors wish to thank the anonymous referee and editors for their insightful and constructive comments and suggestions on the previous version of this paper.

The first author is partially  supported by Lebesgue
Center of Mathematics \textquotedblleft Investissements d'avenir\textquotedblright program-ANR-11-LABX-0020-01,  ANR CAESARS (No. 15-CE05-0024) and  ANR MFG (No. 16-CE40-0015-01).

The second author is partially supported by NSFC (No.~11801315 and No.~71871129), NSF of Shandong Province (No.~ZR2018QA001 and No.~ZR2020MA032), and the Colleges and Universities Youth Innovation Technology Program of Shandong Province (No.~2019KJI011).

The third author is partially supported by NSFC (No.~11971409), Hong Kong
GRF (No.~15204216 and No.~15202817), The PolyU-SDU Joint Research Center on Financial Mathematics and the CAS AMSS-PolyU Joint Laboratory of Applied Mathematics, The Hong Kong Polytechnic University.

\end{document}